\numberwithin{equation}{section}
\theoremstyle{plain}   
\theoremstyle{plain}   \newtheorem{Lem}{Lemma}
\theoremstyle{plain} 	\newtheorem{Cor}{Corollary}
\theoremstyle{plain} 	\newtheorem{The}{Theorem}
\theoremstyle{plain} 	\newtheorem{Prop}{Proposition}
\theoremstyle{plain} 	
\theoremstyle{plain}	
\theoremstyle{plain}	
\theoremstyle{plain}	 
\theoremstyle{plain}	
\theoremstyle{plain}   
\theoremstyle{plain} 
\theoremstyle{plain} 
\def\ceil#1{\left\lceil#1\right\rceil} 
\def\floor#1{\left\lfloor#1\right\rfloor} 
\def\implies{\Longrightarrow}
\def\iff{\Longleftrightarrow}
\def\indicator{\mathds{1}}
\def\Var{\mbox{Var}}
\def\Lambdah{\widehat{\Lambda}}
\def\lambdah{\widehat{\lambda}}
\def\lambdat{\widetilde{\lambda}}
\def\omegat{\widetilde{\omega}}
\def\Thetat{\widetilde{\Theta}}
\def\xib{\overline{\xi}}
\def\E{\mathbb{E}}
\def\N{\mathbb{N}}
\def\P{\mathbb{P}}
\def\Z{\mathbb{Z}}
\def\AM{\mathcal{A}}
\def\BM{\mathcal{B}}
\def\ZM{\mathcal{Z}}
\def\Jt{\widetilde{J}}
\def\Pt{\widetilde{P}}
\def\Tt{\widetilde{T}}
\def\Wt{\widetilde{W}}
\def\Xt{\widetilde{X}}
\def\Yt{\widetilde{Y}}
\def\Zt{\widetilde{Z}}
\def\nt{\widetilde{n}}
\def\xt{\widetilde{x}}
\def\Ah{\widehat{A}}
\def\Mh{\widehat{M}}
\def\Yh{\widehat{Y}}
\def\Gammab{\overline{\Gamma}}
\def\pih{\widehat{\pi}}
\def\zetat{\widetilde{\zeta}}
\renewcommand*{\@fnsymbol}[1]{\ensuremath{\ifcase#1\or * \or ** \else\@ctrerr\fi}}
\begin{document}

\title{Transience, Recurrence and the Speed of a Random Walk in a Site-Based Feedback Environment}

\author{Ross G. Pinsky\thanks{Department of Mathematics, Technion--Israel Institute of Technology.  E-mail - \texttt{pinsky@math.technion.ac.il}. }
~~and~ Nicholas F. Travers\thanks{Department of Mathematics, Technion--Israel Institute of Technology.  E-mail - \texttt{travers@tx.technion.ac.il}. }}

\date{}
\maketitle

\vspace{-5 mm}
\begin{abstract}
We study a random walk on $\Z$ which evolves in a dynamic environment determined by its own trajectory.
Sites flip back and forth between two modes, $p$ and $q$. $R$ consecutive right jumps from a site in the $q$-mode
are required to switch it to the $p$-mode, and $L$ consecutive left jumps from a site in the $p$-mode are required to
switch it to the $q$-mode. From a site in the $p$-mode the walk jumps right with probability $p$ and left with
probability $1-p$, while from a site in the $q$-mode these probabilities are $q$ and $1-q$.

We prove a sharp cutoff for right/left transience of the random walk in terms of an explicit function of the parameters $\alpha = \alpha(p,q,R,L)$. 
For $\alpha > 1/2$ the walk is transient to $+\infty$ for any initial environment, whereas for $\alpha < 1/2$ the walk is transient to $-\infty$ 
for any initial environment. In the critical case, $\alpha = 1/2$, the situation is more complicated and the behavior of the walk depends 
on the initial environment. Nevertheless, we are able to give a characterization of transience/recurrence in many instances, including when 
either $R=1$ or $L=1$ and when $R=L=2$. In the noncritical case, we also show that the walk has positive speed, and in some situations 
are able to give an explicit formula for this speed. 
\end{abstract}

\section{Introduction and Statement of Results}
\label{sec:Intro}
In this paper we introduce a process we call a site-based feedback random walk on $\mathbb{Z}$.
The process $(X_n)_{n \geq 0}$ is a nearest neighbor random walk governed by four parameters:
$p,q\in(0,1)$ and $R,L\in\mathbb{N}$. An informal description is as follows.

Initially each site $x \in \Z$ is set to either the $p$-mode or the $q$-mode. From a site in the $p$-mode the walk
jumps right with probability $p$ and left with probability $1-p$, whereas from a site in the $q$-mode these
probabilities are $q$ and $1-q$, respectively. A site $x$ switches from the $q$-mode to the $p$-mode after the
walk jumps right from $x$ on $R$ consecutive visits to $x$, and a site $x$ switches from the $p$-mode
to the $q$-mode after the walk jumps left from $x$ on $L$ consecutive visits to $x$.

In light of this description, we say the random walk $(X_n)$ has \emph{positive feedback} if
$q < p$ and \emph{negative feedback} if $q > p$. Of course, if $q = p$ the situation is trivial;
we just have a simple random walk of bias $p$.

We now give the formal description and set some notation.

\begin{itemize}
\item $\Lambda = \{(p,0),...,(p,L-1),(q,0),...,(q,R-1)\}$ is the set of \emph {single site configurations}.
A typical configuration is denoted by $\lambda = (r,i)$, where $r \in \{p,q\}$ is the \emph{mode} and $i$
is the number of \emph{charges} in favor of the alternative mode.
\item $\Lambda_p =  \{(p,0),...,(p,L-1)\}$ is the set of $p$-configurations, and \\
$\Lambda_q =  \{(q,0),...,(q,R-1)\}$ is the set of $q$-configurations.
\item $\omega = \{\omega(x)\}_{x \in \Z} \in \Lambda^{\Z}$ is the \emph{initial environment}.
$\omega_n$ is the (random) environment at time $n \geq 0$, $\omega_0 = \omega$.
\item At each step the walk $(X_n)$ jumps right or left according to the following rules:
\begin{align*}
\mbox{ If } \omega_n(X_n) \in \Lambda_p, ~\mbox{ then }
\left\{ \begin{array}{l}
\P(X_{n+1} = X_n + 1) = p, \\
\P(X_{n+1} = X_n - 1) = 1- p.
\end{array} \right.
\end{align*}
\begin{align*}
\mbox{ If } \omega_n(X_n) \in \Lambda_q,  ~\mbox{ then }
\left\{ \begin{array}{l}
\P(X_{n+1} = X_n + 1) = q, \\
\P(X_{n+1} = X_n - 1) = 1- q.
\end{array} \right.
\end{align*}
\item
\noindent
For all $x \not= X_n$, $\omega_{n+1}(x) = \omega_n(x)$. The configuration at the current
position of the walk $X_n$ is updated as follows, depending on the direction of the next jump:
\begin{align*}
& \mbox{ If } \omega_n(X_n) \in \Lambda_p \cup \{(q, R -1)\} \mbox{ and } X_{n+1} = X_n + 1, \mbox{ then } \omega_{n+1}(X_n) = (p,0). \\
& \mbox{ If } \omega_n(X_n) = (q,i), 0 \leq i \leq R-2, \mbox{ and } X_{n+1} = X_n + 1, \mbox{ then } \omega_{n+1}(X_n) = (q,i+1). \\
& \mbox{ If } \omega_n(X_n) \in \Lambda_q \cup \{(p, L -1)\} \mbox{ and } X_{n+1} = X_n - 1, \mbox{ then } \omega_{n+1}(X_n) = (q,0). \\
& \mbox{ If } \omega_n(X_n) = (p,i), 0 \leq i \leq L-2, \mbox{ and } X_{n+1} = X_n - 1, \mbox{ then } \omega_{n+1}(X_n) = (p,i+1).
\end{align*}

\end{itemize}

This site-based feedback random walk is motivated by so-called cookie random walks and shares
certain fundamental properties of two outgrowths of the basic cookie random walk.  A basic cookie random
walk on $\mathbb{Z}$ is defined as follows. Let $M\ge1$ be a positive integer. At each site $x \in \Z$, place
a pile of $M$ ``cookies'' with values $\omega(x,k)\in[0,1]$, $k=1,\ldots, M$. For $k\le M$, the $k$-th time the
process reaches site $x$, it eats the $k$-th cookie at that site, whose value is $\omega(x,k)$, and this
empowers the process to jump to the right with probability $\omega(x,k)$ and to the left with
probability $1-\omega(x,k)$. After the site $x$ has been visited $M$ times, whenever the process visits
that site again, it behaves like an ordinary simple, symmetric random walk, jumping left or
right with equal probability. Cookie random walks were first introduced by Benjamini and Wilson \cite{Benjamini2003}; 
see the survey paper of Kosygina and Zerner \cite{Kosygina2013} for more on cookie random walks and a bibliography.

We now describe two outgrowths of the basic cookie random walk. Kozma, Orenshtein, and Shinkar
\cite{Kozma2013} recently considered a \emph{periodic cookie} random walk. Instead of having a
cookie only the first $M$ times the process is at a given site, consider periodic cookies with period $M$,
and assume that these cookies are identical at each $x \in \Z$. Thus, one defines $\omega(k)$,
$k\in\mathbb{N}$, with $\omega(k+M)=\omega(k)$. For each $x \in \Z$, the $k$th time the process
is at $x$ it jumps right or left with probabilities $\omega(k)$ and $1-\omega(k)$ respectively.
In particular, the process never reverts to a simple, symmetric random walk at any site. Another
outgrowth of the basic cookie random walk is the ``have your cookie and eat it'' random walk
\cite{Pinsky2010}. Now there is only one cookie at each site; call it $\omega(x), x \in \Z$, and
assume $\omega(x)>1/2$. When the process first reaches $x$, it jumps right with probability
$\omega(x)$ and left with probability $1-\omega(x)$. For each site $x$, as long as the process
continues to jump to the right from $x$, it continues to use this right-biased cookie; but after the
first time the process jumps to the left from $x$, the cookie at $x$ is removed. From then on,
whenever the process is at $x$, it behaves like a simple, symmetric random walk,
jumping left or right with equal probability.

The site-based feedback random walk has something in common with each of the two above
processes. In particular, the sequence of configurations encountered on repeated visits to 
a given site in the site-based feedback case is a finite-state Markov chain, and thus ``roughly periodic''
on long time scales, while the jump mechanism at a given site in the site-based feedback case
depends not only on the number of visits to that site but also on the direction of the jumps
on these visits, as in the ``have your cookie and eat it" random walk. However, the site-based feedback random 
walk is also fundamentally different from both of the above processes in that it itself has \emph{both} 
of these properties, and also in that it has persistent interactions with its environment, whereas in the 
``have your cookie and eat it'' case the interactions at a given site $x$ terminate after 
the first leftward jump. 

In this paper we study the transience/recurrence properties of the site-based feedback
random walk, and in the transient case we study the speed of the process. Some new 
features occur that were not present in other cookie random walk models.
In particular, the initial environment can have a dramatic influence on 
the behavior for certain critical values of the parameters $p,q,R,L$. 

Before stating the results, we need to introduce a bit more notation and terminology.
Let $\P_{\omega,k}$ denote the probability measure for the random walk started at $X_0 = k$
in the initial environment $\omega$, and let $\P_{\omega} = \P_{\omega, 0}$. Also, let $\E_{\omega}$ and
$\E_{\omega,k}$ denote, respectively, expectations with respect to the measures $\P_{\omega}$ and
$\P_{\omega,k}$. Finally, for $x \in \Z$, let $N_x$ be the total number of visits to site $x$:
\begin{align}
\label{eq:DefNx}
N_x = |\{n \geq 0 : X_n = x\}|.
\end{align}
We say that the random walk path $(X_n)$ is \footnotemark{}:
\begin{itemize}
\item \emph{recurrent} if $N_0 = \infty$.
\item \emph{right transient}, or \emph{transient to $+\infty$}, if $\lim_{n \to \infty} X_n = +\infty$,
and \emph{left transient}, or \emph{transient to $-\infty$}, if $\lim_{n \to \infty} X_n = -\infty$.
\item \emph{ballistic} if $\liminf_{n \to \infty} |X_n|/n > 0$.
\end{itemize}
\footnotetext{Note that these definitions do not have any a.s. qualifications, and are simply statements about the
(random) path $(X_n) = (X_0, X_1,...)$. Thus, the random walk $(X_n)$ has some probability of being right transient,
some probability of being left transient, and some probability of being recurrent. Typically one says that a random walk 
$(X_n)$ is recurrent/right transient/left transient if, according to our definitions, it is a.s. recurrent/right transient/left transient. 
However, for our model there are some situations (see Theorem \ref{LR2crit}) where there is positive probability 
both for transience to $+\infty$ and for transience to $-\infty$, so for consistency we will speak of all of these properties probabilistically.}

Our first theorem gives the cutoff point for left/right transience.
\begin{The}
\label{thm:RightLeftTransienceCutoff}
Define $\alpha = \alpha(p,q,R,L) \in (0,1)$ by
\begin{align}
\label{eq:DefAlpha}
\alpha =  \frac{p \cdot \big[(1-q)q^R(1 - (1-p)^L)\big] ~+~ q \cdot \big[p(1-p)^L(1-q^R)\big]}
{\big[(1-q)q^R(1 - (1-p)^L)\big] ~+~ \big[p(1-p)^L(1-q^R)\big]}.
\end{align}
\begin{itemize}
\item If $\alpha > 1/2$ then the random walk $(X_n)$ is $\P_{\omega}$ a.s. right transient, for any initial environment $\omega$.
\item If $\alpha < 1/2$ then the random walk $(X_n)$ is $\P_{\omega}$ a.s. left transient, for any initial environment $\omega$.
 \end{itemize}
\end{The}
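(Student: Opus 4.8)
The plan is to reduce the question to the analysis of the walk's behavior on a single site and the induced "direction of departure" process, and then to compare with a classical biased random walk. First I would set up, for each site $x$, the finite-state Markov chain on $\Lambda$ tracking the configuration of $x$ across successive visits: on each visit the walk jumps right (advancing a $q$-charge, or resetting to $(p,0)$) or left (advancing a $p$-charge, or resetting to $(q,0)$), with the jump probability determined by the current mode. The key structural observation is that the sequence of jump directions from $x$ is itself a Markov chain whose transition probabilities depend only on the current configuration of $x$; crucially, this chain does not see the rest of the environment, because only visits to $x$ alter $\omega(x)$. Hence the evolution of each site is an autonomous gadget, and the walk is a deterministic function of the collection of these per-site direction sequences together with the starting point — exactly the structure exploited for, e.g., excited/cookie walks on $\mathbb Z$.

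Next I would make precise the heuristic that $\alpha$ is the "asymptotic rightward probability per site." Concretely, consider the configuration chain on $\Lambda$ driven by the jump rule above, and compute its stationary distribution $\pi$; then the long-run fraction of rightward jumps from a site, averaged in stationarity, should be exactly $\alpha$ as defined in \eqref{eq:DefAlpha}. I would verify this by identifying the two "cycles" the configuration performs — the $q$-side cycle $(q,0)\to\cdots\to(q,R-1)\to(p,0)$ requiring $R$ consecutive rights (occurring with probability $q^R$ worth of weight, interspersed with left-resets of probability $1-q$), and the symmetric $p$-side cycle requiring $L$ consecutive lefts — and computing the stationary weights of the $p$-block and $q$-block; the numerator and denominator of $\alpha$ are precisely the (unnormalized) rates of rightward versus total jumps assembled from these two blocks. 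This is the bookkeeping step that pins down why the cutoff sits at $\alpha = 1/2$.

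For the transience conclusion itself, I would use a hitting/regeneration argument rather than a direct martingale on $X_n$. Suppose $\alpha > 1/2$. Fix a large block length $K$ and consider the walk's net displacement over a window in which it has accumulated many visits to each site it touches; by the law of large numbers for the per-site configuration chains (which are finite, irreducible on their recurrent classes), each site that is visited $\gg R+L$ times contributes a rightward bias bounded below by a constant determined by how close the per-site empirical rightward frequency is to $\alpha$. The standard way to convert "each site has an eventual rightward drift" into transience to $+\infty$ is a comparison argument: show that, for the walk started at $0$, the probability of ever reaching $-m$ decays exponentially in $m$, by coupling the sequence of leftward excursions below a new running minimum with a subcritical branching-type process, or equivalently by showing $\P_\omega(\text{hit } -1 \text{ before } +\infty)$ summed over fresh left-barriers is finite. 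I would likely phrase this via the potential-theoretic identity: let $h(x)$ be the probability the walk ever goes below $x$ after first reaching $x$; the per-site drift towards $+\infty$ forces $\sup_x h(x) < 1$ uniformly in the environment encountered, and a Borel–Cantelli argument on successive record minima then gives $X_n \to +\infty$ a.s. The case $\alpha < 1/2$ is symmetric under the reflection $x \mapsto -x$, $(p,q,R,L)\mapsto(1-q,1-p,L,R)$, which swaps the roles in \eqref{eq:DefAlpha} and sends $\alpha \mapsto 1-\alpha$.

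The main obstacle I anticipate is the uniformity over all initial environments $\omega$ and the handling of sites that are visited only finitely often or in a transient phase: the per-site configuration chain has not reached stationarity on such sites, and a naive "each site is eventually $\alpha$-biased" statement is false for the first few visits. The resolution is that only finitely much "atypical" behavior can occur before a site settles into its recurrent cycle — each site has at most $R+L$ configurations, so at most $O(R+L)$ visits of unpredictable bias — and this finite, environment-independent correction can be absorbed into the constants in the exponential hitting-time estimate. Making that absorption rigorous, uniformly in $\omega$, while correctly coupling the countably many per-site chains so that the record-minimum Borel–Cantelli argument goes through, is the technical heart of the proof.
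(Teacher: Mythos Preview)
Your proposal is correct and follows essentially the same route as the paper. The paper makes your ``subcritical branching-type process'' comparison precise via the \emph{right jumps Markov chain} $(Z_x)_{x\ge 0}$ (where $Z_x$ counts right jumps from site $x$ before the $Z_{x-1}$-th left jump, with $Z_0=1$); survival of $(Z_x)$ is equivalent to never hitting $0$ starting from $1$, and the paper shows $\P_\omega(Z_x>0\ \forall x)\ge \beta>0$ uniformly in $\omega$ by a large-deviation bound for the per-site chain that is uniform over the \emph{initial state} in $\widehat\Lambda$. This uniform-over-initial-state large-deviation estimate is exactly what absorbs your ``finite unpredictable bias'' correction cleanly, without having to track how many visits each site has received; it immediately yields your uniform bound $\sup_x h(x)\le 1-\beta$, from which the geometric tail on backstep distance and the Borel--Cantelli finish follow just as you sketch.
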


We will call the vector $(p,q,R,L)$ the \emph{parameter quadruple} for the random walk $(X_n)$.
In light of Theorem \ref{thm:RightLeftTransienceCutoff}, we say that the parameter quadruple
$(p,q,R,L)$ is \emph{critical} if $\alpha(p,q,R,L) = 1/2$, and \emph{noncritical} otherwise.
Our next theorem shows that in the noncritical case, the random walk is not just transient,
but in fact ballistic.

\begin{The}
\label{thm:BallisticityWhenNonCritical}
If $\alpha(p,q,R,L) \neq 1/2$, then there exists a $\beta = \beta(p,q,R,L) > 0$ such that,
for any initial environment $\omega$,
\begin{align}
\label{eq:LiminfXnnGreaterDelta}
\liminf_{n \to \infty} \frac{|X_n|}{n} \geq \beta, ~ \P_{\omega} \mbox{ a.s. }
\end{align}
Moreover, if $\alpha>1/2$ ($\alpha<1/2$) and the initial environment $\omega(x)$ is constant for
$x \geq m$ ($x \leq - m$) then $\E_\omega(N_x)$ is also constant for $x \geq m$ ($x \leq - m$),
and denoting this common value by $\gamma$,
\begin{align}
\label{eq:SpeedENxInverse}
 \lim_{n \to \infty} \frac{|X_n|}{n} =
\frac1\gamma,~ \P_{\omega} \mbox{ a.s.}
\end{align}
Here, $m$ can be any nonnegative integer.
\end{The}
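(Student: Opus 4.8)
The plan is to reduce to the case $\alpha>1/2$ and to build everything around \emph{regeneration times}. The case $\alpha<1/2$ follows by applying the reflection $x\mapsto -x$, which carries the model to another site-based feedback walk (with parameter quadruple $(1-q,1-p,L,R)$, for which the corresponding $\alpha$ is $1-\alpha(p,q,R,L)>1/2$). So assume $\alpha>1/2$. By Theorem~\ref{thm:RightLeftTransienceCutoff} the walk is $\P_{\omega}$-a.s.\ right transient for every $\omega$, so $\lim_n X_n=+\infty$ and hence $N_x<\infty$ $\P_{\omega}$-a.s.\ for each $x$. Call a time $\sigma$ a \emph{regeneration time} if $X_n<X_\sigma$ for all $n<\sigma$ and $X_n\ge X_\sigma$ for all $n\ge\sigma$; then $X_\sigma$ and all sites to its right have never been visited, so the post-$\sigma$ trajectory is, after translating by $X_\sigma$, a function only of fresh coin flips and of $\{\omega(y):y\ge X_\sigma\}$. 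The first and crucial task is to show that for every $\omega$ there are $\P_{\omega}$-a.s.\ infinitely many regeneration times, and that the time- and space-increments between consecutive ones have finite, and in fact uniformly-in-$\omega$ bounded, first and second moments. Existence is the standard Sznitman--Zerner argument: each time the walk reaches a new record it has probability at least some $\kappa>0$ of never backtracking below it, and these attempts are essentially independent, so one succeeds a.s. Both the uniform lower bound $\kappa>0$ and the moment bounds come from the quantitative input behind Theorem~\ref{thm:RightLeftTransienceCutoff} -- uniform control of $\E_{\omega}(N_x)$ and of $\P_{\omega}(\inf_n X_n\le -j)$ when $\alpha>1/2$. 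I expect obtaining a genuinely \emph{uniform} bound on the expected regeneration time to be the main obstacle.

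Granting this, write $\sigma_0<\sigma_1<\cdots$ for the regeneration times and $Y_j:=X_{\sigma_j}$. For arbitrary $\omega$ the blocks form a Markov chain indexed by the current record level rather than an i.i.d.\ sequence, but each block's duration has uniformly bounded conditional mean and variance, so by a martingale strong law $\sigma_j\le Cj+o(j)$ $\P_{\omega}$-a.s., while $Y_j\ge Y_{j_0}+(j-j_0)$ trivially. Interpolating over $n\in[\sigma_j,\sigma_{j+1})$ (on which $Y_j\le X_n<Y_{j+1}$ and $\sigma_j\le n<\sigma_{j+1}$) gives $\liminf_n |X_n|/n\ge\beta$ for a constant $\beta=\beta(p,q,R,L)>0$, which is \eqnref{eq:LiminfXnnGreaterDelta}. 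When moreover $\omega(x)\equiv\lambda_0$ for $x\ge m$, then for all $j$ large enough that $Y_j\ge m$ the blocks \emph{are} i.i.d.; hence $\sigma_j/j\to\theta\in(0,\infty)$ and $Y_j/j\to\Delta\in[1,\infty)$ by the SLLN, and the same interpolation gives $X_n/n\to\Delta/\theta=:v>0$ $\P_{\omega}$-a.s.

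It remains to identify $v=1/\gamma$. First, a deterministic identity: since the walk is right transient, every visit to a site $x$ falls inside the single block whose index is the first $j$ with $Y_j>x$, and counting time steps up to $\sigma_j$ yields $\sigma_j=\sum_{x<Y_j}N_x$; dividing by $Y_j$ and letting $j\to\infty$ shows the Cesàro averages $\tfrac1K\sum_{x=0}^{K}N_x$ converge $\P_{\omega}$-a.s.\ to $\theta/\Delta=1/v$. Second, $\E_{\omega}(N_x)$ is the same for every $x\ge m$: the number of visits to $x$ is determined by the joint evolution of the configuration at $x$, the directions of the successive departures from $x$, and whether the resulting rightward excursions return to $x$ -- and this evolution is autonomous and translation-invariant in $x\ge m$, because each \emph{leftward} excursion from $x$ returns to $x$ with probability one (the walk being right transient), so the environment to the left of $x$ is irrelevant to $N_x$. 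Hence $N_x\equalinlaw N_{x+1}$ for $x\ge m$, so $\E_{\omega}(N_x)$ equals a common finite value $\gamma$ there. Finally, writing $\E_{\omega}\!\big(\sum_{x\le K}N_x\big)=\sum_{x<m}\E_{\omega}(N_x)+(K-m+1)\gamma$ and comparing with the sandwich $\sigma_{j-1}\le\sum_{x\le K}N_x\le\sigma_{j}$ (with $j$ the first index with $Y_j>K$), whose expectations are $(1+o(1))K/v$ by Wald's identity applied to the regeneration sequence, forces $\gamma=1/v$, i.e.\ \eqnref{eq:SpeedENxInverse}.

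The outstanding routine points are the uniform integrability needed to turn the a.s.\ Cesàro limit into the equality $\gamma=1/v$, and the overshoot estimate in the Wald step; both are immediate from the uniform second-moment bounds on the regeneration increments established in the first step, which is, as noted, where the real work lies.
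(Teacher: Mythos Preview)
Your approach via regeneration times is a correct and genuinely different route from the paper's. The paper never introduces regeneration times; instead it works directly with the (correlated) occupation-time sequence $(N_x)_{x\ge0}$. The key technical step there is a strong law $\tfrac1n\sum_{x=1}^n\big(N_x-\E_\omega(N_x)\big)\to0$ $\P_\omega$-a.s., proved by bounding $\Cov^+(N_x,N_y)$ via the exponential tail of the backtracking variable $B_y$ (the farthest the walk retreats after first reaching $y$), and then invoking a dependent-SLLN criterion. Given that strong law, $\lim_{x\to\infty}T_x/x=\gamma$ follows by sandwiching $T_x$ between $\sum_{y\le(1-\epsilon)x}N_y$ and $\sum_{y\le x}N_y$ on the event $\{B_y\le\epsilon y\ \text{eventually}\}$, and \eqref{eq:SpeedENxInverse} then comes from the elementary hitting-time-to-speed lemma. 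For the first part \eqref{eq:LiminfXnnGreaterDelta} the same argument with $\E_\omega(N_x)\le1/\beta$ in place of $\E_\omega(N_x)=\gamma$ suffices.

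The uniform lower bound $\kappa>0$ you need is exactly what the paper establishes (a uniform-in-history lower bound on the probability of never returning after stepping right), and the geometric tails on $N_x$ and $B_x$ that follow from it give your moment bounds on the regeneration increments, so your flagged obstacle is indeed surmountable with the paper's ingredients. The paper's route is shorter here because it avoids the regeneration bookkeeping entirely and makes the identification of the speed with $1/\gamma$ immediate, whereas you spend your last two paragraphs on a Wald/uniform-integrability argument to match the a.s.\ Ces\`aro limit of $N_x$ with its expectation. On the other hand, your framework is more modular and, once the second moments are in place, would give a CLT essentially for free; the paper's covariance-based SLLN does not.
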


The following proposition  characterizes some properties of the fundamental function $\alpha$.
We choose to analyze $\alpha$ as a function of $p$ for fixed $R,L,q$; of course, a similar analysis also
works to analyze $\alpha$ as a function of $q$ for fixed $R,L,p$.

\begin{Prop}
\label{prop:PropertiesOfAlpha}
Let $R,L,q$ be fixed and consider $\alpha$ as a function of $p$, $\alpha(p) \equiv \alpha(p,q,R,L)$.
\begin{itemize}

\item[(i)] If $q = 1/2$, then
\begin{equation*}
\begin{aligned}
\label{eq:alphaHalf}
\alpha(1/2) = 1/2 ~,~ \alpha(p) < 1/2 \mbox{ for } p < 1/2 ~,~ \alpha(p) > 1/2 \mbox{ for } p > 1/2.
\end{aligned}
\end{equation*}

\item[(ii)] If $q < 1/2$, then there exists a unique critical point $p_0 = p_0(q,R,L) \in (1/2,1)$ such that
\begin{equation}
\begin{aligned}
\label{eq:alphap0}
\alpha(p_0) = 1/2 ~,~ \alpha(p) < 1/2 \mbox{ for } p < p_0 ~,~ \alpha(p) > 1/2 \mbox{ for } p > p_0.
\end{aligned}
\end{equation}

\item[(iii)] For $q < 1/2$ the critical point $p_0$ from (ii) satisfies
\begin{align}
\label{eq:qPlusp0}
&q+p_0(q,R,L)<1, \ \text{if}\ R<L; \nonumber \\
&q+p_0(q,R,L)>1, \ \text{if}\ R>L.
\end{align}
Also, for any fixed $R$ and $L$, $p_0(q,R,L)$ is a decreasing function of $q$, for $q \in (0,1/2)$.

\item[(iv)] If $q < 1/2$ and $L=1$, then
\begin{equation}\label{p_0}
p_0 = \frac{1-2q+q^{R+1}}{1-2q+q^R}.
\end{equation}
If $q>1/2$  and $L=1$, then (\ref{eq:alphap0}) still
holds with $p_0 =  \frac{1-2q+q^{R+1}}{1-2q+q^R}$ as long as $1 - 2q + q^{R+1} > 0$.
However, if $1 - 2q + q^{R+1} \leq 0$, then $\alpha(p) > 1/2$, for all $p \in (0,1)$.

\item[(v)] If $q < 1/2$ and $L=R$, then $p_0 =1 - q.$ If $q > 1/2$
and $L=R$, then $1-q$ is still a critical point (i.e. $\alpha(1-q) = 1/2$),
but it is not always unique.

\item[(vi)] For any $R,L,q$, $\lim_{p \to 1} \alpha(p) = 1$. In particular, $\alpha > 1/2$
for all sufficiently large $p$.

\end{itemize}
\end{Prop}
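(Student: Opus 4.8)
The plan is to rewrite $\alpha(p) - 1/2$ as a single rational function of $p$ whose numerator controls the sign, and then to read off each of the six items from the structure of that numerator. Let me set $A = A(p) = (1-q)q^R(1-(1-p)^L)$ and $B = B(p) = p(1-p)^L(1-q^R)$, so that $\alpha = (pA + qB)/(A+B)$. Then
\begin{equation}
\label{eq:alphaMinusHalf}
\alpha(p) - \tfrac12 = \frac{(p-\tfrac12)A + (q-\tfrac12)B}{A+B}.
\end{equation}
Since $A, B \ge 0$ with $A+B > 0$ for $p \in (0,1)$ (note $A=0$ only at $p=0$ when $L\ge 1$, and more carefully $1-(1-p)^L>0$ for $p\in(0,1)$, while $B>0$ there), the sign of $\alpha(p)-1/2$ is the sign of the numerator $\Phi(p) := (p-\tfrac12)A(p) + (q-\tfrac12)B(p)$. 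Item (i) is then immediate: when $q=1/2$, $\Phi(p) = (p-\tfrac12)A(p)$ has the sign of $p - 1/2$, and $\alpha(1/2)=1/2$.

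For items (ii)--(iii) I would study $\Phi$ on $(1/2,1)$ when $q < 1/2$. First note $\Phi(1/2) = (q-\tfrac12)B(1/2) < 0$ and $\Phi(1^-) > 0$ because $A(1) = (1-q)q^R > 0$ while $B(1) = 0$, so $\Phi(1) = \tfrac12(1-q)q^R>0$; hence a root $p_0 \in (1/2,1)$ exists by continuity. For $p \le 1/2$, $\Phi(p) \le (q-1/2)B(p) < 0$ for $p\in(0,1/2]$ and $\Phi(0)=0$... more carefully $\Phi<1/2$ region: both $(p-1/2)\le 0$ and $(q-1/2)<0$ force $\Phi(p)\le 0$ on $(0,1/2]$ with equality only at $p=1/2$ when... actually at $p=1/2$, $\Phi<0$. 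So all roots in $(0,1)$ lie in $(1/2,1)$. Uniqueness is the crux. The cleanest route is to clear $(1-p)^L$: write $A = (1-q)q^R - (1-q)q^R(1-p)^L$ and factor, so that $\Phi(p) = (p-\tfrac12)(1-q)q^R - (1-p)^L\big[(p-\tfrac12)(1-q)q^R - (q-\tfrac12)p(1-q^R)\big]$. Setting $\Phi(p)=0$ and dividing, $p_0$ solves $(1-p)^L = g(p)$ where $g(p) = \frac{(p-1/2)(1-q)q^R}{(p-1/2)(1-q)q^R + (1/2-q)p(1-q^R)}$. On $(1/2,1)$ the left side $(1-p)^L$ is strictly decreasing from a positive value to $0$; I would check $g$ is strictly increasing on $(1/2,1)$ (it has the form $c_1 t/(c_1 t + c_2(t+1/2))$ after substituting $t = p-1/2$, i.e. $g = 1/(1 + (c_2/c_1)(1 + \tfrac{1}{2t}))$, manifestly increasing in $t$), giving a unique crossing. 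This also proves (iii): $q + p_0 \lessgtr 1 \iff p_0 \lessgtr 1-q \iff (1-p_0)^L \gtrless q^L$, and one compares $g(1-q)$ with $q^L$ using the explicit formula, finding the comparison is governed by the sign of $R-L$ via $q^R$ versus $q^L$; the monotonicity of $p_0$ in $q$ follows by implicit differentiation of $(1-p)^L = g(p;q)$, or by a monotonicity-in-$q$ comparison of the two sides.

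Items (iv) and (v) are explicit substitutions: set $L=1$, so $(1-p)^L = 1-p$, and solve the now-linear equation $\Phi(p)=0$ for $p$, yielding $p_0 = \frac{1-2q+q^{R+1}}{1-2q+q^R}$; the caveat when $1-2q+q^{R+1}\le 0$ (only possible when $q>1/2$) is handled by observing that then the candidate $p_0 \notin (0,1)$ and $\Phi>0$ throughout $(0,1)$. For (v), substitute $p = 1-q$ into $\Phi$ and check it vanishes: with $L=R$ one gets $(1-p)^L = q^R$ and the bracketed term collapses, so $\Phi(1-q)=0$ regardless of whether $q<1/2$ or $q>1/2$; non-uniqueness for $q>1/2$ can be exhibited by noting $\Phi$ then need not be monotone, e.g. via a sign-change count or a concrete parameter example. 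Item (vi) follows from $\lim_{p\to 1}A(p) = (1-q)q^R>0$, $\lim_{p\to1}B(p)=0$, hence $\lim_{p\to1}\alpha(p) = \lim (pA+qB)/(A+B) = 1$.

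The main obstacle is the uniqueness claim in (ii) (and the sign comparisons in (iii)): everything else is bookkeeping, but pinning down that $\Phi$ has exactly one root in $(1/2,1)$ requires the right algebraic normalization — I expect the substitution $t = p - 1/2$ and the reformulation $(1-p)^L = g(p)$ with $g$ a ratio that is transparently monotone to be the key maneuver, after which the decreasing-meets-increasing argument closes it.
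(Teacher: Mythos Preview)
Your approach is correct and genuinely different from the paper's. You work directly with the numerator $\Phi(p)=(p-\tfrac12)A+(q-\tfrac12)B$ and, for uniqueness in (ii), recast $\Phi=0$ as $(1-p)^L=g(p)$ with $g$ manifestly increasing on $(1/2,1)$, so a strictly decreasing curve meets a strictly increasing one exactly once. The paper instead proves the stronger fact that $\alpha$ itself is strictly increasing on $[q,1)$: it computes $\frac{d}{dp}(\pi_p/\pi_q)>0$ on $(0,1)$, deduces $\frac{d}{dp}\pi_p>0$, and then $\frac{d}{dp}\alpha=\pi_p+(p-q)\frac{d}{dp}\pi_p>0$ for $p\ge q$. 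Since $q<1/2$ forces any critical point into $(1/2,1)\subset(q,1)$, uniqueness is immediate. The paper's route buys you monotonicity of $\alpha$ as a byproduct and, by the $p\leftrightarrow q$ symmetry of the argument, makes the ``$p_0$ decreasing in $q$'' part of (iii) a two-line consequence. Your route avoids differentiating $\pi_p$ and yields the neat identity $g(1-q)=q^R$, from which the $R\lessgtr L$ comparison in (iii) drops out cleanly.

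One place where your sketch is thinner than the paper is the non-uniqueness claim in (v) for $q>1/2$, $L=R$. Saying ``$\Phi$ need not be monotone'' or ``a concrete parameter example'' is not yet a proof. The paper handles this by computing $\frac{d}{dp}\alpha\big|_{p=1-q}$ explicitly (after substituting $L=R$) and showing the sign condition reduces to $R(1-q)(1-2q)+q(1-q^R)<0$, which holds for any fixed $q>1/2$ once $R$ is large enough; combined with $\alpha(1-q)=1/2$ and $\lim_{p\to1}\alpha=1$, this forces a second critical point above $1-q$. You should supply an argument of this kind rather than leave it as an existence-by-example assertion.
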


\noindent \bf Remark 1.\rm\
Part (v) shows that $\alpha(p)$ is not always a monotonic function of $p$, and, in fact,
often it is not. Consequently, increasing $p$ (with fixed $q$, $R$, $L$) may sometimes
change the process from the right transient regime to the left transient regime. However, this phenomena
can only occur when $q > 1/2$, by part (ii), in which case the process has negative feedback at all critical points.
Illustrative plots are given in Figure \ref{fig:AlphaPlot}.
\medskip

\noindent \bf Remark 2.\rm\
As noted before the proposition, we could have considered $\alpha$ as a function of $q$ for fixed
$p,R,L$. We note, in particular, that  in the case that $p>1/2$, there exists  a unique critical point
$q_0=q_0(p,R,L) \in (0,1/2)$, and when in addition, $R=1$, one has
\begin{equation}\label{q_0}
q_0=\frac{p(1-p)^L}{2p-1+(1-p)^L}.
\end{equation}
Moreover, if $R = 1$ and $p \leq 1/2$, then there is still a unique critical point $q_0$ given by \eqref{q_0}
as long as $2p-1 + (1-p)^L > 0$. However, if $2p-1+(1-p)^L \leq 0$, then $\alpha < 1/2$, for all $q \in (0,1)$.
\bigskip

\begin{figure}[h]
\label{fig:AlphaPlot}
\hspace{-12 mm}
\includegraphics[scale=0.45]{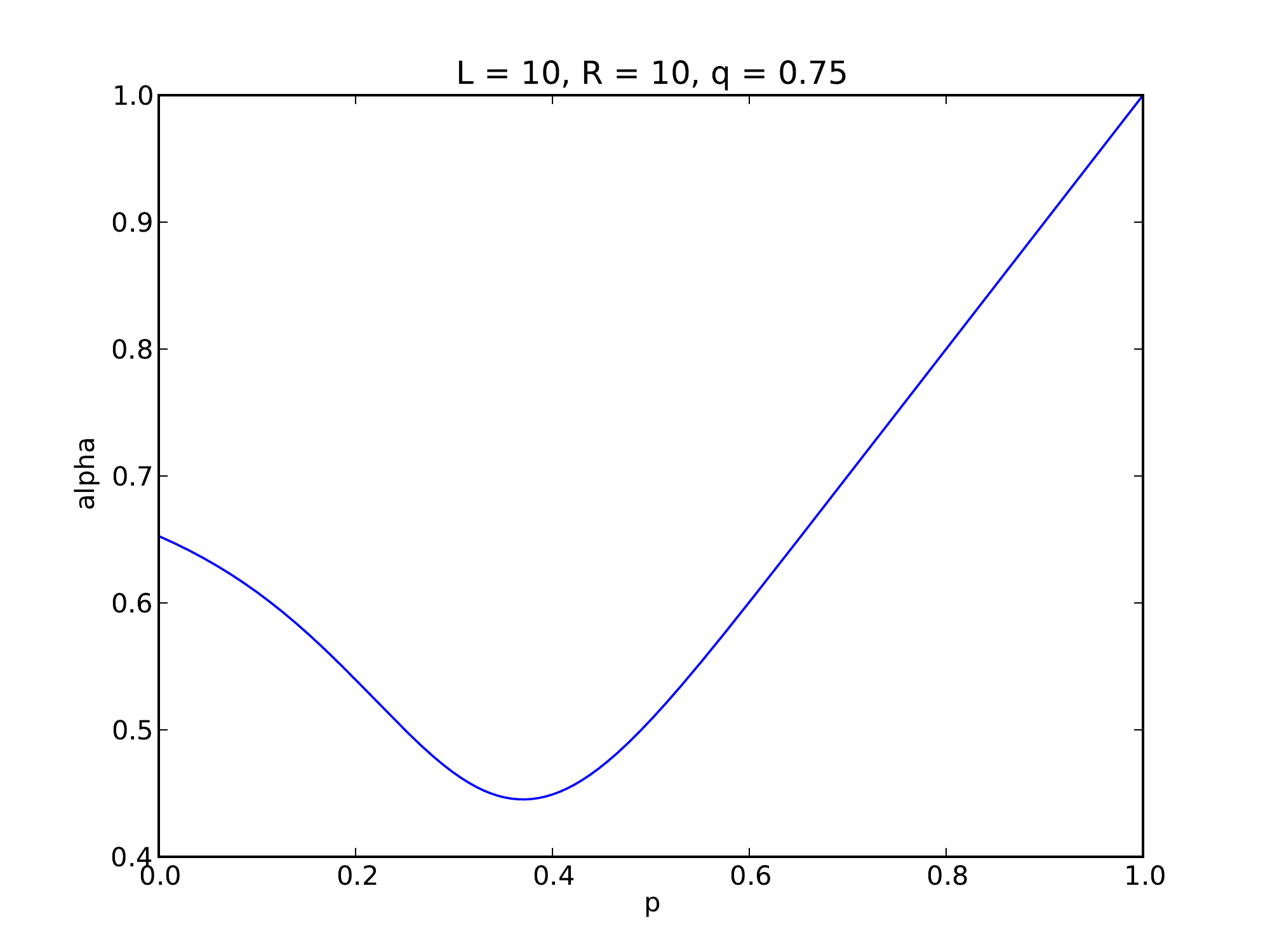}
\hspace{-10 mm} 
\includegraphics[scale=0.45]{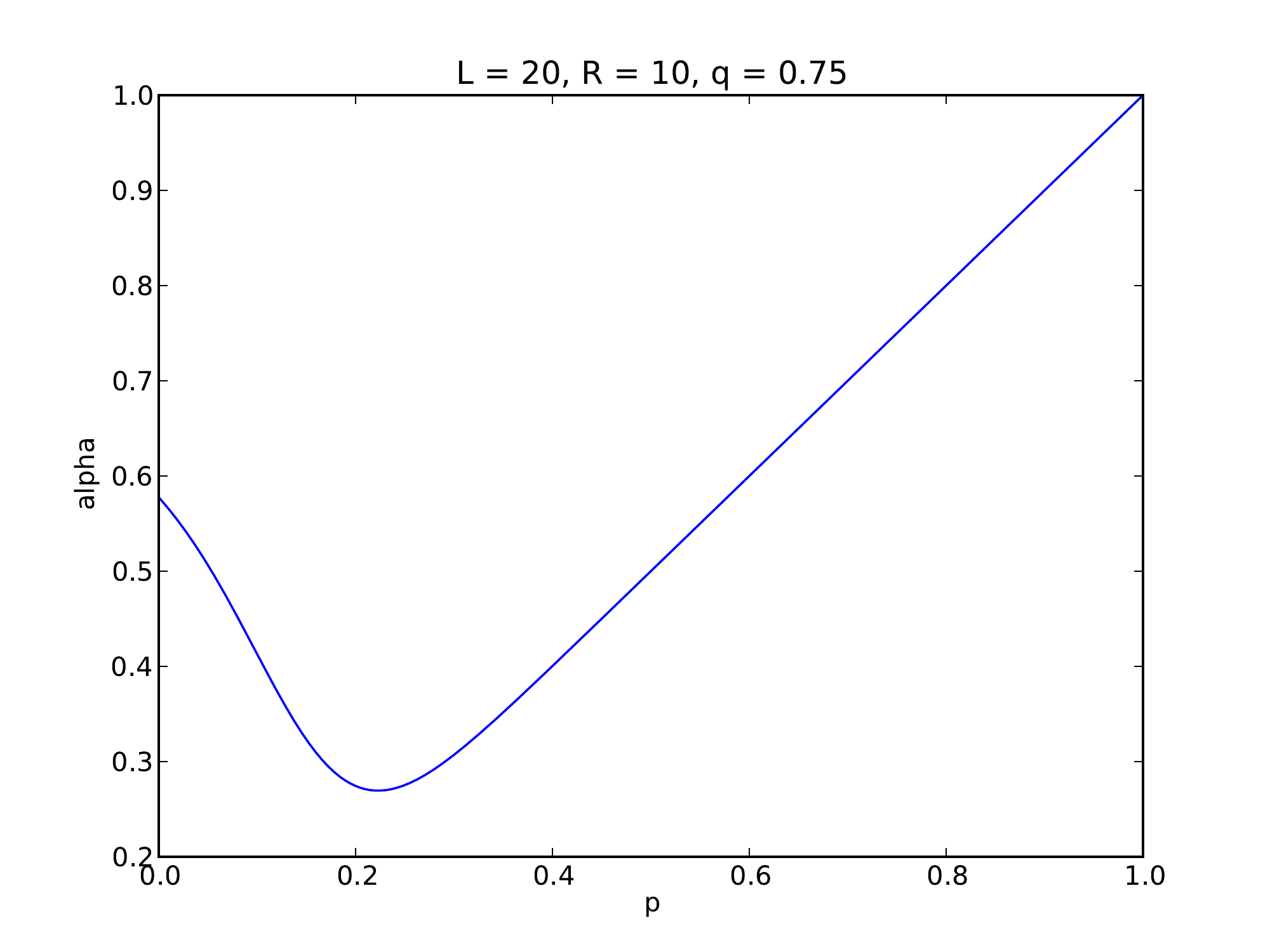}
\caption{Plots of $\alpha(p)$ with $L = 10, R = 10, q = 0.75$ (left) and $L = 20, R = 10, q = 0.75$ (right). 
In both cases, as $p$ increases from $0$ to $1$ the parameter quadruple $(p,q,R,L)$ passes from right transient 
($\alpha > 1/2$), to left transient ($\alpha < 1/2$), and back to right transient.}
\end{figure}

In general for cookie-type random walks, it is very difficult to obtain an explicit
formula for the speed in the ballistic regime. However, the additional level of interaction
between the random walker and the environment in the site-based feedback case
makes a calculation of the speed possible in some situations. Before moving on to the critical case,
we present two results that give an exact characterization of the limiting speed with $R$ or $L$
equal to $1$, in certain initial environments.  We assume that $\alpha > 1/2$, but analogous results
for $\alpha < 1/2$ are easily inferred by symmetry considerations. Specifically, if $\alpha(p,q,R,L) < 1/2$
then $\alpha(1-q, 1-p, L, R) > 1/2$, and the speed to $-\infty$ with parameters $p,q,R,L$ in an initial
environment $\omega$ is the same as the speed to $+\infty$ with parameters $1-q,1-p,L,R$
in an initial environment $\omega'$ defined by $\omega'(x) = \omega(-x)^*, x \in \Z$,
where $(q,i)^* = (1-q,i)$ and $(p,i)^* = (1-p,i)$.

\begin{The}
\label{thm:L1Speed}
Let $L = 1$ and $\alpha > 1/2$. If $\omega(x) = (q,0)$ in a neighborhood of $+\infty$, then
\begin{align}
\label{eq:L1Speed}
\lim_{n \to \infty} \frac{X_n}{n} = \frac{1 - t_*}{1 + t_*},~ \P_{\omega} \mbox{ a.s.},
\end{align}
where $t_*$ is the unique root of the polynomial
\begin{align}
\label{eq:DefPolynomialP}
P(t) = (1 - q) + (pq - p - 1)t + (p+q)t^2 - pqt^3 - (p-q) q^R (t^R - t^{R+1})
\end{align}
in the interval $(1-q,1)$.
\end{The}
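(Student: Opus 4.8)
The plan is to reduce to a computation of $\E_\omega(N_x)$ and then solve a small system of equations. Since $\alpha>1/2$ and $\omega(x)=(q,0)$ for all $x\ge m$ (some $m$), Theorem~\ref{thm:BallisticityWhenNonCritical} applies: $\E_\omega(N_x)$ takes a common finite value $\gamma$ for all $x\ge m$, and $\lim_n X_n/n = 1/\gamma$, $\P_\omega$-a.s. (using Theorem~\ref{thm:RightLeftTransienceCutoff} to know the walk is right transient, so $|X_n|=X_n$ eventually). Thus it suffices to show $\gamma = (1+t_*)/(1-t_*)$.

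The computation of $\gamma$ rests on a structural feature of the case $L=1$. Call a state of the walk \emph{$x$-fresh} if the walk is at $x$ and every site $>x$ is in configuration $(q,0)$; since $\omega(x)=(q,0)$ for $x\ge m$, the walk is in an $x$-fresh state the first time it reaches any $x\ge m$. \textbf{Claim:} if, from an $x$-fresh state, the walk jumps right and later returns to $x$, then at that return it is again in an $x$-fresh state, with the configuration at $x$ updated by the single right jump. Indeed, the excursion between leaving $x$ and returning stays in $[x+1,\infty)$, and for each site $z\ge x+1$ visited during it the \emph{last} visit to $z$ in the excursion must be followed by a left jump (else the walk could not descend back to $x$); with $L=1$ every such left jump resets the configuration at $z$ to $(q,0)$. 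A companion fact, using that the walk is $\P_\omega$-a.s.\ transient to $+\infty$: a left jump from any site is a.s.\ followed by a return to that site, and the ensuing leftward excursion touches no site $>x$, hence returns the walk to an $x$-fresh state with configuration $(q,0)$ at $x$.

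Now define, for each single-site configuration $\lambda$ (so $\lambda\in\{(q,0),\dots,(q,R-1),(p,0)\}$ since $L=1$): $h(\lambda)=$ the probability that a walk in an $x$-fresh state with configuration $\lambda$ at $x$ ever reaches $x-1$, and $n(\lambda)=$ the expected number of visits to $x$ from such a state. By the two facts above these are finite and do not depend on the environment to the left of $x$, and decomposing on the first jump from $x$ gives, with $H:=h((q,0))$,
\begin{align*}
h((q,j)) &= (1-q) + qH\,h((q,j{+}1)) \ \ (0\le j\le R-2), \quad h((q,R{-}1)) = (1-q) + qH\,h((p,0)),\\
h((p,0)) &= (1-p) + pH\,h((p,0)),
\end{align*}
together with the parallel system for $n$ obtained by adding $1$ to each right-hand side and replacing the constants $(1-q)$, $(1-p)$ by $(1-q)n((q,0))$, $(1-p)n((q,0))$. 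Summing the geometric $h$-system produces the single equation $H=(1-q)\frac{1-(qH)^R}{1-qH}+(qH)^R\frac{1-p}{1-pH}$; clearing the denominators $1-qH$, $1-pH$ (nonzero for $H\in[0,1]$) and rearranging yields exactly $P(H)=0$. Also $H\le 1$ since $h$ is a probability, $H>1-q$ from the first relation, and $H\ne 1$: solving the (linear) $n$-system and using the $H$-equation gives $\gamma(1-H)=\frac{1-(qH)^R}{1-qH}+\frac{(qH)^R}{1-pH}>0$ while $\gamma<\infty$ by transience. Hence $H\in(1-q,1)$ is a root of $P$, so $H=t_*$ (uniqueness of the root of $P$ in $(1-q,1)$ being the elementary fact that $P(t)=(1-t)\widetilde{P}(t)$ with $\widetilde{P}(t)=(1-q)+(pq-p-q)t+pqt^2-(p-q)q^Rt^R$, and that $\Phi(H):=(1-q)\frac{1-(qH)^R}{1-qH}+(qH)^R\frac{1-p}{1-pH}$ is convex and increasing on $[0,1]$ with $\Phi(1)=1$, $\Phi(1-q)>1-q$, so $\Phi(H)-H$ has a single zero there). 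Finally, since $H\ne1$ and $P(H)=0$ force $\widetilde{P}(H)=0$, a short manipulation shows $\frac{1-(qH)^R}{1-qH}+\frac{(qH)^R}{1-pH}=1+H$, whence $\gamma=(1+H)/(1-H)=(1+t_*)/(1-t_*)$ and $\lim_n X_n/n = 1/\gamma=(1-t_*)/(1+t_*)$.

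I expect the main obstacle to be the structural claim in the second paragraph and the attendant bookkeeping — in particular, verifying that $h(\lambda)$ and $n(\lambda)$ genuinely depend only on $\lambda$ (and the $x$-fresh right half-line), not on the random environment the walk has created to the left of $x$, and that the configuration updates across right- and left-excursions are exactly as stated. Once this is in place the remaining steps — setting up and solving the two linear-type systems and the factorization $P(t)=(1-t)\widetilde{P}(t)$ — are routine algebra.
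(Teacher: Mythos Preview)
Your proposal is correct and follows essentially the same route as the paper: both reduce to Theorem~\ref{thm:BallisticityWhenNonCritical}, both exploit the key $L=1$ observation that after any right jump all sites at and to the right of the walker are in configuration $(q,0)$ (your ``$x$-fresh'' states), both identify $\gamma=(1+\eta)/(1-\eta)$ with $\eta=\P_\omega(T_{-1}<\infty)$ and derive $P(\eta)=0$ by conditioning on the sequence of right-excursions from $0$. The only substantive difference is the uniqueness argument: the paper factors $P(t)=(1-t)Q(t)$ and analyzes the real roots of $Q$ by a case split on $R$ and on the sign of $p-q$ (using that $1/q$ is a root of $Q$ and counting critical points), whereas you argue via convexity of $\Phi(H)$ on $[0,1]$; your argument is valid (convexity of $\Phi-\mathrm{id}$ together with $\Phi(1)=1$ forces at most one zero in $(0,1)$, and existence comes from $\eta$), and is arguably cleaner, though you should state the convexity verification explicitly rather than asserting it.
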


\begin{The}
\label{thm:R1Speed}
Let $R = 1$ and $\alpha > 1/2$. Assume that the limiting right density of $(p,i)$ sites
$d_i \equiv \lim_{n \to \infty} \frac{1}{n} |\{0 \leq x \leq n-1: \omega(x) = (p,i)\}|$
exists, for each $0 \leq i \leq L-1$, and let $d_L = 1 - \sum_{i=0}^{L-1} d_i$
denote the limiting right density of $(q,0)$ sites. Then,
\vspace{-2 mm}
\begin{align}
\label{eq:R1Speed}
\lim_{n \to \infty} \frac{X_n}{n} = \frac{1}{\sum_{i = 0}^L a_i d_i}~,~ \P_{\omega} \mbox{ a.s.,}
\end{align}
where
\vspace{-2 mm}
\begin{align}
\label{eq:Def_a0}
a_0 = \frac{1 + (p/q-1)(1-p)^L}{(2p-1) - (p/q - 1)(1-p)^L}~,
\end{align}
and, for $1 \leq i \leq L$,
\begin{align}
\label{eq:Def_ai}
a_i = \frac{1 + (p/q-1)(1-p)^{L-i}}{p} ~+~ \left( \frac{(1-p) + (p/q-1)(1-p)^{L-i}}{p} \right) a_0.
\end{align}
\end{The}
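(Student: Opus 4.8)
Throughout, fix the initial environment $\omega$ with the stated densities $d_0,\dots,d_{L-1}$, put $d_L:=1-\sum_{i=0}^{L-1}d_i$, and let $\mathrm{type}(x)\in\{0,\dots,L\}$ denote the label of $\omega(x)$ (so $\mathrm{type}(x)=i$ if $\omega(x)=(p,i)$ and $\mathrm{type}(x)=L$ if $\omega(x)=(q,0)$). Since $\alpha>1/2$, Theorems~\ref{thm:RightLeftTransienceCutoff} and~\ref{thm:BallisticityWhenNonCritical} already give that $(X_n)$ is $\P_\omega$-a.s.\ right transient and ballistic; in particular the hitting times $\sigma_n:=\inf\{m:X_m=n\}$ are finite and satisfy $1\le\sigma_n/n\le 1/\beta+o(1)$. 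Writing $\sigma_n=\sum_{x\in\Z}N^{(n)}_x$ with $N^{(n)}_x$ the number of visits to $x$ before $\sigma_n$, one has $\sum_{x<0}N^{(n)}_x\le|\{m:X_m<0\}|<\infty$ $\P_\omega$-a.s., while $\E_\omega\big(\sum_{x=0}^{n-1}(N_x-N^{(n)}_x)\big)$ is bounded in $n$ because the probability that the walk ever returns to $x$ after first reaching $n$ decays geometrically in $n-x$. Hence $\sigma_n=\sum_{x=0}^{n-1}N_x+o(n)$ $\P_\omega$-a.s., and, $X_n$ being ballistic, it suffices to show
\[
\frac1n\sum_{x=0}^{n-1}N_x\ \longrightarrow\ \sum_{i=0}^{L}a_id_i,\qquad \P_\omega\text{-a.s.},
\]
since then $\sigma_n/n\to\sum_i a_id_i$ and $X_n/n\to(\sum_i a_id_i)^{-1}$.

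The engine of the proof is a linear recursion for $\E_\omega(N_x)$. For $x\ge1$ let $D_x$ be the total number of leftward jumps ever made from $x$. Right transience forces the last jump from $x$ to be rightward and every excursion into $\{<x\}$ to return to $x$, whence the number of rightward jumps from $x$ equals $D_{x+1}+1$ and $N_x=D_x+D_{x+1}+1$. Now record the successive jumps from $x$ as a word in $\{\mathrm L,\mathrm R\}$. Because $R=1$, each $\mathrm R$ resets the configuration of $x$ to $(p,0)$, so this word consists of an initial block of $\mathrm L$'s emitted from $\omega(x)$, then an $\mathrm R$, then i.i.d.\ blocks of $\mathrm L$'s emitted from $(p,0)$ each followed by an $\mathrm R$, the last of which is the jump after which the walk escapes to $+\infty$. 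The length $k_0$ of the first block is a ``capped geometric'' variable depending only on $\mathrm{type}(x)$, with mean $\mu_i:=\E[k_0\mid\mathrm{type}(x)=i]=\tfrac1p\big[(1-p)+(p/q-1)(1-p)^{L-i}\big]$ (the case $i=L$, i.e.\ $\omega(x)=(q,0)$, giving $\mu_L=(1-q)/q$); the later blocks are i.i.d.\ with mean $\mu:=\mu_0$; and the number $M$ of $\mathrm R$'s before the escape equals $D_{x+1}$. The key point is that $M$ is a function of the initial environment on $\{>x\}$ together with the coin flips used in the rightward excursions, hence independent of the coin flips governing the leftward-block lengths; a Wald/optional-stopping computation then gives, for $x\ge1$,
\[
\E_\omega(D_x)=\mu_{\mathrm{type}(x)}+\mu\,\E_\omega(D_{x+1}).
\]

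Because $\alpha>1/2$ one has $\mu<1$ (for $R=1$, $\mu=1$ is exactly the critical case $\alpha=1/2$), and $\sup_x\E_\omega(N_x)<\infty$ by comparison with a homogeneous environment; iterating the recursion gives $\E_\omega(D_x)=\sum_{k\ge0}\mu^k\mu_{\mathrm{type}(x+k)}$, hence $\E_\omega(N_x)=1+\sum_{k\ge0}\mu^k\big(\mu_{\mathrm{type}(x+k)}+\mu_{\mathrm{type}(x+k+1)}\big)$. Averaging over $0\le x\le n-1$ and letting $n\to\infty$, the density hypothesis (shifts by a fixed $k$ do not change the limiting densities, and the tail in $k$ is summable since $\mu<1$) yields $\tfrac1n\sum_{x=0}^{n-1}\E_\omega(N_x)\to 1+\tfrac{2}{1-\mu}\sum_{i=0}^{L}d_i\mu_i$. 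A short algebraic identity — which is precisely how the closed forms \eqref{eq:Def_a0}, \eqref{eq:Def_ai} arise — shows that $\tfrac{2}{1-\mu}=a_0+1$ and $a_i=1+\mu_i(1+a_0)=1+\tfrac{2\mu_i}{1-\mu}$ for every $i$, so that $1+\tfrac{2}{1-\mu}\sum_i d_i\mu_i=\sum_{i=0}^{L}a_id_i$. It remains to upgrade this convergence of means to the displayed $\P_\omega$-a.s.\ statement for $\tfrac1n\sum_{x=0}^{n-1}N_x$ itself: $N_x$ depends on the far environment only through the rightward return structure, not through the leftward-block coins, which together with the uniform moment bound provides enough decorrelation to run a law of large numbers.

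I expect two places to require genuine work. The first is making the block decomposition of $D_x$ and the resulting Wald identity rigorous: one must verify, on the a.s.\ event of right transience and without circular conditioning, that every leftward excursion returns and that the number $M$ of rightward returns is truly independent of the leftward-block coins despite the two being interleaved in time. The second is the passage from $\E_\omega$ to $\P_\omega$-a.s.\ convergence of the averaged local time, which needs both the uniform bound $\sup_x\E_\omega(N_x)<\infty$ (and $\mu<1$), presumably via a monotone coupling with a homogeneous environment, and a quantitative decorrelation estimate for $(N_x)_{x\ge0}$. The remaining ingredients — the reduction to hitting times, the algebraic verification of \eqref{eq:Def_a0} and \eqref{eq:Def_ai}, and the elementary computation of $\mu_i$ — are routine.
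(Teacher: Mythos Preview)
Your approach is correct and genuinely different from the paper's. The paper first treats the special case where $\omega(x)=(p,0)$ for all $x<0$: the key observation is that with $R=1$ the environment to the \emph{left} of the walker is always identically $(p,0)$, so the increments $\Delta_x=T_{x+1}-T_x$ are independent with law depending only on $\omega(x)$, and the classical i.i.d.\ SLLN applied typewise yields $T_x/x\to\sum_i d_i a_i$, with the $a_i$ read off from the first-step linear system~\eqref{eq:LinearSystemExpectedHittingTimes}; the general environment is then handled by conditioning on the last visit to $0$. You instead stay in the given $\omega$ and use the $R=1$ reset to decompose the jump word at each site into blocks, obtaining the recursion $\E_\omega(D_x)=\mu_{\mathrm{type}(x)}+\mu\,\E_\omega(D_{x+1})$ and hence an explicit convolution formula for $\E_\omega(N_x)$; the a.s.\ upgrade and the passage $T_n=\sum_{x<n}N_x+o(n)$ are then precisely Lemma~\ref{lem:StrongLawForNx} together with the $\BM_\epsilon$-sandwich in the proof of Theorem~\ref{thm:BallisticityWhenNonCritical}. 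Your route avoids the Case~1/Case~2 split and produces the closed forms~\eqref{eq:Def_a0}--\eqref{eq:Def_ai} more transparently (via $a_0+1=2/(1-\mu)$ and $a_i=1+\mu_i(1+a_0)$), at the cost of invoking the general decorrelation SLLN rather than the plain i.i.d.\ one.

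On the two points you flag: the Wald independence is clean once one uses the paper's single-site coupling---$L_{x+1}$ is, on the a.s.\ right-transience event, a measurable function of $(J^y_n)_{y\ge x+1}$ alone (leftward excursions from $x+1$ always return), hence independent of the block coins $(J^x_n)$. The inequality $\mu<1$ need not be argued from the phase diagram: Corollary~\ref{cor:NxDominatedByGeometric} gives $\sup_x\E_\omega(N_x)<\infty$, and iterating your recursion with $\mu\ge1$ would force $\E_\omega(D_x)=\infty$. Finally, your ``bounded in $n$'' claim for $\E_\omega\sum_{x=0}^{n-1}(N_x-N^{(n)}_x)$ needs one more line than you give: the geometric return probability must be combined with the geometric domination of the visit count (Corollary~\ref{cor:NxDominatedByGeometric}/\ref{cor:PrAnPlusGreaterEqualBeta}) to get $\E_\omega(N_x-N^{(n)}_x)\le\beta^{-1}(1-\beta)^{n-x}$, which then sums.
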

\medskip

\noindent \bf Remark.\rm\ 
In the case $R = L = 1$ with $\alpha > 1/2$ (i.e. with $p+q > 1$), the speed $s = \lim_{n \to \infty} \frac{X_n}{n}$ 
from \eqref{eq:R1Speed} reduces to 
\begin{align*}
s = \frac{p+q-1}{1 + (1-2d)(p-q)}~,
\end{align*} 
where $d = d_0$ is the limiting right density of $(p,0)$ sites in the initial environment $\omega$. 
Interestingly, this formula for $s$ is invariant under the interchange $p \leftrightarrow q$, $d \leftrightarrow 1-d$. 
That is, the speed of the walk is the same with positive or negative reinforcement, as along as the 
pair of bias parameters and the limiting right density of sites initially in the configuration
with the more favorable bias for jumping right remain the same. \medskip

We now turn to the critical case, $\alpha=1/2$. Here, there are two possibilities:
positive feedback with $q < 1/2 < p$ or negative feedback with $p < 1/2 < q$.
In the case of positive feedback the situation is somewhat simpler, but in both
cases the analysis is more delicate than before, and the transience/recurrence of
the random walk often depends heavily on the initial environment.

Our first result shows that there always exist initial environments for which the random walk is recurrent. 

\begin{The}
\label{recurrpossible}
If $\alpha(p,q,R,L)=1/2$, then there exist initial environments $\omega$ for
which the random walk $(X_n)$ is a.s. recurrent. In particular, in the positive feedback case, 
$q < p$, the random walk $(X_n)$ is $\P_{\omega}$ a.s. recurrent for any initial 
environment $\omega$ with $\omega(x)=(q,0)$ for $x$ in a neighborhood of $+\infty$ 
and $\omega(x)=(p,0)$ for $x$ in a neighborhood of $-\infty$. 
\end{The}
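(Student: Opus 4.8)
The plan is to exhibit one explicit family of ``trapping'' environments and argue separately that (a) in the positive‑feedback critical case the walk cannot escape to either infinity, and (b) in the negative‑feedback critical case one can find an analogous environment. The first observation is the key structural fact about the dynamics at a single site: on repeated visits to a site $x$, the sequence of configurations $\omega_n(x)$ (recorded only at the times the walk is at $x$, just before it jumps) evolves as a finite‑state Markov chain on $\Lambda$, and in fact, because the walk is nearest‑neighbor, each visit to $x$ is followed by a jump to $x\pm1$ and the next return to $x$ happens either immediately from $x+1$ or from $x-1$. The crucial point is that a site in the configuration $(q,0)$ moves ``toward the $p$-mode'' only after $R$ consecutive rightward jumps out of it, and similarly $(p,0)$ moves toward the $q$-mode only after $L$ consecutive leftward jumps. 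So a site that has just been left by a jump to, say, the right can, upon the walk's return from the right, flip back. I would first set up this per‑site chain carefully and record the quantity that will make the criticality hypothesis bite — namely the asymptotic ``drift'' contributed by site $x$, which by the computation underlying Theorem \ref{thm:RightLeftTransienceCutoff} is governed precisely by $\alpha = 1/2$.

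Next I would treat the positive‑feedback case $q<1/2<p$ directly, which the theorem statement singles out. Take $\omega$ with $\omega(x)=(q,0)$ for all $x\ge M$ and $\omega(x)=(p,0)$ for all $x\le -M$ (any $M$). The idea is a confinement argument from both sides. Suppose, for contradiction, that the walk is right transient with positive probability. On that event it eventually stays in $\{x : x \ge K\}$ for some random $K\ge M$, so it only ever sees sites that started in the $q$-mode. On the region to the right, each fresh site encountered is in configuration $(q,0)$; I would show, using a comparison with a (one‑dimensional, possibly time‑inhomogeneous) random walk whose increments reflect the per‑site Markov chain described above, that the ``effective'' bias governing escape to $+\infty$ from a block of such sites is exactly the $\alpha=1/2$ value, i.e. the comparison walk is recurrent‑like (null), and hence transience to $+\infty$ has probability zero. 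Symmetrically, the left half‑line is built entirely of $(p,0)$ sites, which by the $p\leftrightarrow 1-q$, $q\leftrightarrow 1-p$, $R\leftrightarrow L$ symmetry noted in the paper is the mirror image of the right‑half situation and is also critical, so transience to $-\infty$ also has probability zero. Since right transient, left transient and recurrent exhaust the sample space (the walk is nearest neighbor on $\Z$, so it either has a finite limit — impossible — or oscillates, or goes to $\pm\infty$), we conclude the walk is a.s. recurrent.

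To make the ``effective bias $=1/2$'' comparison rigorous I would use an embedded branching/flow argument: let $U_x$ (resp.\ $D_x$) be the number of up‑crossings of the edge $(x,x+1)$ (resp.\ down‑crossings), and track the ``ladder'' process $x\mapsto U_x - D_x$, which for a nearest‑neighbor walk is $0$ or $\pm1$; the distribution of the sequence of configurations seen at $x$, hence of the number of left vs.\ right exits from $x$ before the walk leaves the interval, is exactly what enters the recursion for these crossing numbers. The criticality $\alpha = 1/2$ is the statement that the associated multiplicative/additive recursion has its ``Lyapunov exponent'' equal to zero, so the expected number of crossings neither decays nor grows geometrically as one moves away from the origin, which forces $N_0 = \infty$. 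I expect this step — turning the single‑site Markov chain statistics into the statement that the right‑half environment of all‑$(q,0)$ sites is exactly balanced, and ruling out escape despite the balance (a local‑CLT / potential‑theoretic ``critical case'' argument rather than a law‑of‑large‑numbers one) — to be the main obstacle; it is the place where one genuinely uses $\alpha = 1/2$ rather than $\alpha \le 1/2$.

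Finally, for the general claim (some recurrent environment exists for every critical quadruple, including the negative‑feedback case $p<1/2<q$), I would either (i) repeat the two‑sided confinement with the ``reflecting‑looking'' environment that is built, on each half‑line, out of the single configuration whose long‑run per‑site statistics are neutral — in the negative‑feedback case the natural choice is again $(q,0)$ on the right and $(p,0)$ on the left, and one checks the same balance computation goes through — or (ii) reduce to case (a) by the explicit symmetry $\alpha(p,q,R,L)=1/2 \iff \alpha(1-q,1-p,L,R)=1/2$ together with the environment reflection $\omega'(x)=\omega(-x)^*$ described just before Theorem \ref{thm:L1Speed}, under which recurrence is preserved. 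Option (ii) is cleanest: it immediately upgrades the positive‑feedback statement to all critical quadruples, so I would only need to carry out the confinement argument once.
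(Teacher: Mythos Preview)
The proposal has two genuine gaps.

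First, for the general (in particular, negative-feedback) case, both of your reductions fail. Option (ii) does not work: the reflection $(p,q,R,L)\mapsto(1-q,1-p,L,R)$ preserves the sign of $p-q$, so it cannot convert a negative-feedback critical quadruple into a positive-feedback one. Option (i) is also wrong as stated: it is \emph{not} true that the all-$(q,0)$ environment on the right always blocks right transience under negative feedback --- Theorem~\ref{LR2crit} shows that for $R=L=2$, $p=1-q$, $q>q_2^*$, the walk with $\omega(x)=(q,0)$ to the right has \emph{positive} probability of transience to $+\infty$. So in negative feedback one cannot simply name the configuration in advance. The paper instead proves an averaging identity: writing $\rho^{(\lambda)}(n)=\E(U(n))-n$ for the step distribution of the right-jumps chain $(Z_x)$ in the constant-$\lambda$ environment, one has $\sum_{\lambda\in\Lambda_0}\psi_\lambda\,\rho^{(\lambda)}(j)=0$ for every $j$, from which it follows that \emph{some} $\lambda^*\in\Lambda_0$ has $\rho^{(\lambda^*)}(n)$ bounded above by a small constant for all large $n$. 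This pigeonhole/averaging step is the missing idea.

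Second, even in positive feedback, your ``Lyapunov exponent zero'' sketch does not separate recurrence from transience. Criticality gives $\mu=1$ for the step distribution of $(Z_x)$, but at $\mu=1$ survival versus absorption is decided by a second-order quantity $\theta(n)=2\rho(n)/\nu(n)$ compared to $1$ (this is Lemma~\ref{lem:ThetaLessGreater1}, the Kozma--Orenshtein--Shinkar criterion). Knowing only that crossings neither grow nor decay geometrically is insufficient; one must control both the drift $\rho(n)$ and the scaled variance $\nu(n)$ of $U(n)-n$. The paper uses the averaging identity above to force $\theta^{(\lambda^*)}(n)\le 1$ for large $n$, invokes Lemma~\ref{lem:ThetaLessGreater1} to conclude $\P_\omega(X_n\to\infty)=0$, and only then, in the positive-feedback case, transfers this from $\lambda^*$ to $(q,0)$ via the comparison Lemma~\ref{lem:ComparisonOfEnvironments}. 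Your proposal lacks both the Lamperti-type criterion and the averaging step that feeds it.
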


The next two theorems, and concomitant corollary and proposition, concern the situation that either $R$ or $L$ is 1. 
In this case, we can give an essentially complete description of when the random walk is recurrent, right transient, 
or left transient. However, for technical reasons, we will need to assume in many instances that the initial 
environment $\omega$ is constant either in a neighborhood of $+\infty$, a neighborhood of $-\infty$, or both.
Our first result indicates that, when $R$ or $L$ is equal to 1, only one of the two directions is possible for transience.

\begin{The}
\label{RorL1}
Assume $\alpha = 1/2$.
\begin{itemize}
\item If $R = 1$ and $q < p$, then the random walk $(X_n)$ is $\P_{\omega}$ a.s. not transient to $+\infty$,
for any initial environment $\omega$.
\item If $R = 1$ and $p < q$, then the random walk $(X_n)$ is $\P_{\omega}$ a.s. not transient to $+\infty$, for any environment
$\omega$ which is constant in a neighborhood of $+\infty$.
\item If $L = 1$ and $q < p$, then the random walk $(X_n)$ is $\P_{\omega}$ a.s. not transient to $-\infty$,
for any initial environment $\omega$.
\item If $L = 1$ and $p < q$, then the random walk $(X_n)$ is $\P_{\omega}$ a.s. not transient to $-\infty$, for any environment
$\omega$ which is constant in a neighborhood of $-\infty$.
\end{itemize}
\end{The}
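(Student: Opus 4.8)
The two $L=1$ statements reduce to the two $R=1$ statements via the reflection symmetry recalled just before Theorem~\ref{thm:L1Speed}: $\alpha(p,q,1,L)=1/2$ iff $\alpha(1-q,1-p,L,1)=1/2$; the walk with parameters $(p,q,1,L)$ in environment $\omega$ is transient to $-\infty$ exactly when the walk with parameters $(1-q,1-p,L,1)$ in the reflected environment $\omega'$ is transient to $+\infty$; and this reflection turns ``$q<p$'' into ``$q'<p'$'' (with $p'=1-q$, $q'=1-p$) and ``$\omega$ constant near $-\infty$'' into ``$\omega'$ constant near $+\infty$''. So it suffices to treat $R=1$.

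Fix $R=1$, $\alpha=1/2$, and an admissible $\omega$ (arbitrary if $q<p$; constant near $+\infty$ if $p<q$), and suppose for contradiction that $\P_\omega(X_n\to+\infty)>0$; without loss of generality $X_0=0$. Work on the event $A=\{X_n\to+\infty\}$. On $A$ every site $x\ge 0$ is visited finitely often, the final jump out of $x$ is to the right, and the edge $e_x=(x,x+1)$ is never traversed after the walk's last exit from $x$. Letting $U_x$ be the number of left-to-right traversals of $e_x$ and $D_x$ the number of right-to-left traversals, the standard crossing bookkeeping gives, on $A$, $U_x-D_x=1$ for $x\ge 0$ and $N_x=U_{x-1}+D_x=U_{x-1}+U_x-1$ for $x\ge 1$, where $N_x$ is the number of visits to $x$. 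Also, $U_x$ equals the number of rightward jumps made during the $N_x$ visits to $x$.

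The engine is the local environment chain: I would first record the elementary but crucial fact that, on the successive visits to a fixed site $x$, the single-site configuration at $x$ traces a path of the finite irreducible Markov chain on $\Lambda$ determined by the update rules, started from $\omega(x)$, and that conditionally on the number and temporal ordering of those visits this path is independent of everything off $x$. For $R=1$ the unique stationary measure $\mu$ of this chain satisfies $\sum_{\lambda}\mu(\lambda)\rho(\lambda)=\alpha$, where $\rho\equiv p$ on $\Lambda_p$, $\rho\equiv q$ on $\Lambda_q$; thus $\alpha=1/2$ makes it ``drift-free''. Combining this with the crossing identities via the ergodic theorem for the local chain, $U_x=\tfrac12 N_x + c_x + E_x$, where $c_x$ is a bounded correction coming from the non-stationary start $\omega(x)$ and from the conditioning (on $A$) that the last jump from $x$ be rightward, and $E_x$ is a fluctuation of size $O(\sqrt{N_x})$. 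Substituting $N_x=U_{x-1}+U_x-1$ yields the recursion $U_x=U_{x-1}-1+2c_x+2E_x$ for the nonnegative integer sequence $(U_x)_{x\ge0}$. Note that absent the $O(1)$ correction this would force $U_x\approx U_{x-1}-1$ and an immediate contradiction; the whole content is that the correction is too small to sustain $U_x>0$ forever.

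It then remains to show that $2c_x\le 1$ for all large $x$, so that $(U_x)$ has nonpositive drift, and to upgrade this — by an optional-stopping/supermartingale argument together with Borel--Cantelli control of the $O(\sqrt{N_x})$ fluctuations — to the conclusion that $(U_x)$ reaches $0$ at some finite $x\ge 0$, $\P_\omega(\cdot\mid A)$-a.s. This contradicts $A$, on which a walk from $0$ tending to $+\infty$ crosses every edge $e_x$, $x\ge0$, at least once; hence $\P_\omega(A)=0$. The main obstacle is precisely the analysis of this recursion: rigorously producing $U_x=\tfrac12 N_x+c_x+E_x$ with a serviceable bound on $c_x$ and enough control of $E_x$ while conditioning on the global event $A$ (so that the $N_x$, which encode the future trajectory, are not independent of the local chains), and doing so uniformly over the environments the walk can generate. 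This is also where the hypotheses split: in the positive-feedback case $q<p$ the repeated visits drive the configuration at $x$ toward stationarity whatever $\omega(x)$ is, so no assumption on $\omega$ is needed, whereas in the negative-feedback case $p<q$ one uses constancy of $\omega$ near $+\infty$ to pin the relevant start $\omega(x)$ and thereby bound $c_x$.
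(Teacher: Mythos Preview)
Your reduction by reflection symmetry to the $R=1$ case matches the paper, but from there your argument is only a sketch and the hard step is not carried out. You yourself flag the ``main obstacle'': producing the decomposition $U_x=\tfrac12 N_x+c_x+E_x$ with a usable bound on $c_x$ and control of $E_x$, all while conditioning on the global event $A=\{X_n\to+\infty\}$. That conditioning biases every local count (on $A$ the last jump out of each $x\ge0$ is forced to be right) and destroys the independence you would need to apply an ergodic theorem site by site; moreover $N_x$ itself depends on the future trajectory, so the ``$O(\sqrt{N_x})$ fluctuations'' are not a priori martingale increments to which Borel--Cantelli or optional stopping applies. Even granting the recursion $U_x\approx U_{x-1}-1+2c_x+2E_x$, you have essentially rediscovered that $(U_x)$ behaves like a critical process, and concluding that such a process hits $0$ is exactly the nontrivial content, not a corollary. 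As written, the proof is incomplete.

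The paper sidesteps all of this by working with an auxiliary process whose law is clean without any conditioning. It introduces a modified walk $(\widetilde X_n)$ reflected at $-1$ and the \emph{left jumps Markov chain} $(W_n)$, equal in law (for $\omega$ constant on $x\ge0$) to the number of jumps of $\widetilde X$ from $0$ to $-1$ before first reaching $N$. A short argument (Proposition~\ref{leftjumps}) shows that $\P_\omega(X_n\to+\infty)>0$ iff $(W_n)$ is positive recurrent. When $R=1$, the configuration at $x$ is reset to $(p,0)$ after every right jump, so $(W_n)$ is precisely a Galton--Watson branching process with immigration whose offspring distribution $V$ has $\E(V)=1$ exactly because $\alpha=1/2$. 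Classical results on critical branching with immigration (the paper cites Seneta) then give $W_n/n\Rightarrow$ a nondegenerate law, hence $(W_n)$ is not positive recurrent, hence $\P_\omega(X_n\to+\infty)=0$. The passage from ``constant near $+\infty$'' to ``arbitrary $\omega$'' in the positive-feedback case is handled by the environment-comparison Lemma~\ref{lem:ComparisonOfEnvironments}, not by any analysis of $c_x$.
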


The following corollary is an immediate consequence of this theorem and part (ii) of
Lemma \ref{lem:SimpleTransConditions} in section \ref{subsec:BasicLemmas}.

\begin{Cor}
\label{RandL1}
Let $R=L=1$ and  $p=1-q$; so $\alpha=1/2$.
\begin{itemize}
\item If $q < p$ then the random walk $(X_n)$ is $\P_{\omega}$ a.s. recurrent, for any initial environment $\omega$.
\item If $p < q$ then the random walk $(X_n)$ is $\P_{\omega}$ a.s. recurrent, for any initial environment $\omega$
which is constant in neighborhoods of $+ \infty$ and $- \infty$.
\end{itemize}
\end{Cor}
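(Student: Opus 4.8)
The plan is to derive Corollary~\ref{RandL1} directly from Theorem~\ref{RorL1} together with a basic symmetry of the model and the elementary transience/recurrence trichotomy referenced as part (ii) of Lemma~\ref{lem:SimpleTransConditions}. The point is that, for a nearest-neighbor walk, each path is either recurrent, right transient, or left transient, and these are the only three options; so if we can rule out both directions of transience, recurrence follows for free. Theorem~\ref{RorL1} with $R=L=1$ handles left transience directly (third and fourth bullets), while the interchange symmetry of the model handles right transience by reducing it to an instance of left transience for the ``mirrored'' parameter quadruple.

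Here are the steps in order. First, I would note that with $R=L=1$ and $p=1-q$ we have $\alpha(p,q,1,1)=1/2$ by part~(v) of Proposition~\ref{prop:PropertiesOfAlpha} (or directly from \eqref{eq:DefAlpha}: when $p+q=1$ the weighted average $\alpha$ of $p$ and $q$ with weights that are swapped under $p\leftrightarrow q$ equals $(p+q)/2 = 1/2$ — more precisely one checks the two bracketed coefficients are equal when $p=1-q$, $R=L=1$), so the hypotheses of Theorem~\ref{RorL1} are met and the model is critical. Second, in the case $q<p$: the third bullet of Theorem~\ref{RorL1} (with $L=1$, $q<p$) gives that $(X_n)$ is $\P_\omega$ a.s.\ not left transient for any $\omega$. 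For right transience, apply the mirror symmetry described in the paragraph preceding Theorem~\ref{thm:L1Speed}: transience to $+\infty$ with parameters $(p,q,1,1)$ in environment $\omega$ corresponds to transience to $-\infty$ with parameters $(1-q,1-p,1,1)$ in the mirrored environment $\omega'(x)=\omega(-x)^*$. Since $p=1-q$ forces $1-q=p>q=1-p$, the primed parameters $(p',q')=(1-q,1-p)$ satisfy $q'<p'$ and $p'=1-q'$, so they are again in the critical $q'<p'$ regime with $L'=1$; hence the third bullet of Theorem~\ref{RorL1} applies to them and rules out left transience there, i.e.\ rules out right transience for the original walk. With both directions excluded, part~(ii) of Lemma~\ref{lem:SimpleTransConditions} forces $\P_\omega$ a.s.\ recurrence, for every $\omega$.

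Third, the case $p<q$ is entirely parallel but one must track where the ``constant in a neighborhood'' hypotheses are needed. The fourth bullet of Theorem~\ref{RorL1} (with $L=1$, $p<q$) rules out left transience provided $\omega$ is constant in a neighborhood of $-\infty$. For right transience, apply the same mirror symmetry: the primed quadruple is $(1-q,1-p,1,1)$ with $1-q<1-p$, i.e.\ $q'<p'$, so now the \emph{third} bullet of Theorem~\ref{RorL1} applies and needs no constancy hypothesis — but wait, the mirroring sends a neighborhood of $+\infty$ for $\omega$ to a neighborhood of $-\infty$ for $\omega'$, and the third bullet needs nothing, so actually right transience is ruled out for any $\omega$. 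Re-examining: for $p<q$ we need $\omega$ constant near $-\infty$ to kill left transience (fourth bullet) and nothing extra to kill right transience (via the third bullet applied to the mirrored, now-positive-feedback quadruple). So strictly the corollary as stated asks for constancy near both $\pm\infty$, which is a (harmless) over-hypothesis; I would state the proof to match, invoking the fourth bullet for left transience and remarking that for right transience constancy near $+\infty$ of $\omega$ — equivalently near $-\infty$ of $\omega'$ — is more than enough. Then Lemma~\ref{lem:SimpleTransConditions}(ii) again gives recurrence.

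The main obstacle is purely bookkeeping: getting the mirror symmetry exactly right, in particular verifying that under $(p,q)\mapsto(1-q,1-p)$ the feedback sign \emph{flips} ($q<p \iff q'=1-q<1-p=p' \iff q'<p'$ — wait, this preserves the sign, not flips it), so that the regime ``$q<p$'' maps to ``$q'<p'$'' and ``$p<q$'' maps to ``$p'<q'$''; hence the third and fourth bullets of Theorem~\ref{RorL1} are each self-paired under the symmetry, and I must make sure I am quoting the bullet for the correct regime after mirroring. Once that correspondence is pinned down, and the criticality $\alpha=1/2$ is confirmed (so Theorem~\ref{RorL1} applies to both the original and mirrored quadruples), the rest is the one-line trichotomy argument. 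No genuinely new estimate is required; the corollary is a clean packaging of Theorem~\ref{RorL1} plus symmetry plus the elementary fact that a non-transient nearest-neighbor path is recurrent.
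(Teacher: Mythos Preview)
Your approach works but is needlessly roundabout, and it contains a sign error in the $p<q$ case.

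The paper's argument is simpler: since \emph{both} $R=1$ and $L=1$ hold here, all four bullets of Theorem~\ref{RorL1} apply directly. For $q<p$, the first bullet ($R=1$, $q<p$) rules out right transience and the third bullet ($L=1$, $q<p$) rules out left transience, each for arbitrary $\omega$. For $p<q$, the second bullet ($R=1$, $p<q$) rules out right transience assuming $\omega$ constant near $+\infty$, and the fourth bullet ($L=1$, $p<q$) rules out left transience assuming $\omega$ constant near $-\infty$. Then part~(ii) of Lemma~\ref{lem:SimpleTransConditions} gives recurrence. No mirror symmetry is needed, since the $R=1$ bullets already handle right transience.

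Regarding your error: in the $p<q$ case you write that the mirrored quadruple $(p',q')=(1-q,1-p)$ satisfies $q'<p'$, but this is backwards. With $p<q$ one has $p'=1-q<1-p=q'$, so the mirrored walk is again in the negative-feedback regime $p'<q'$. Hence it is the \emph{fourth} bullet (not the third) that applies to the mirrored walk, and that bullet requires $\omega'$ constant near $-\infty$, i.e.\ $\omega$ constant near $+\infty$. So the constancy near $+\infty$ is not an ``over-hypothesis'' in your argument; it is genuinely used. Your final conclusion is still correct with the hypotheses as stated in the corollary, but your side remark that one of the two constancy assumptions is superfluous is wrong.
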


Our next theorem gives specific conditions to determine if the random walk is recurrent or transient to $+\infty$ in
the case $L = 1$ and $R > 1$ (which, by Theorem \ref{RorL1}, are the only possibilities). By symmetry considerations,
if $R=1$, instead of $L=1$, then the result obtained for $L=1$ will hold with the roles of  $q,p,R$ and  $\pm\infty$
replaced by $1-p,1-q,L$ and $\mp\infty$ respectively.

\begin{The}
\label{L1andEnvironment}
Assume that $L=1$, $R \geq 2$, and $\alpha = 1/2$. Thus, by \eqref{p_0}, $p=p_0=\frac{1-2q+q^{R+1}}{1-2q+q^R}$. In the
case of negative feedback, $p < q$, assume also that the initial environment $\omega$ is constant in a neighborhood of $-\infty$.
\begin{itemize}
\item[(i)] If $\omega(x) = (q,0)$ in a neighborhood of $+\infty$, then the random walk
$(X_n)$ is $\P_{\omega}$ a.s. recurrent.
\item[(ii)] If $\omega(x) = (q,i)$ in a neighborhood of $+\infty$, $1 \leq i \leq R-1$, then the random walk
$(X_n)$ is
\begin{align*}
\P_{\omega}& \mbox{ a.s. recurrent if } P_{R,i}(q) \geq 0, \mbox{ and }\\
\P_{\omega}& \mbox{ a.s. transient to $+\infty$ if } P_{R,i}(q) < 0,
\end{align*}
where
\begin{align}
\label{poly}
P_{R,i}(q) & = (2R-1)q^{R+2}-(3R+1)q^{R+1}+(R+1)q^R \nonumber \\ &~~~ -2q^{R+2-i}+3q^{R+1-i}-q^{R-i}+(1-2q)^2.
\end{align}
\item[(iii)] If $\omega(x) = (p,0)$ in a neighborhood of $+\infty$, then the random walk $(X_n)$ is
\begin{align*}
\P_{\omega}& \mbox{ a.s. recurrent if } P_{R,R}(q) \geq 0, \mbox{ and }\\
\P_{\omega}& \mbox{ a.s. transient to $+\infty$ if } P_{R,R}(q) < 0,
\end{align*}
where
\begin{align}
\label{polyR}
P_{R,R}(q)=(2R-1)q^{R+2}-(3R+1)q^{R+1}+(R+1)q^{R}+2q^2-q.
\end{align}
Moreover, for each $R \geq 2$ there exists a unique root $q_*(R)\in(0,1/2)$ of the polynomial $P_{R,R}(q)$, $P_{R,R}(q) > 0$ for
$q > q_*(R)$, $P_{R,R}(q) < 0$ for $q < q_*(R)$, and $\lim_{R\to\infty}q_*(R)=1/2$.
\end{itemize}
\end{The}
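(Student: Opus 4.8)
The plan is to reduce the recurrence/transience dichotomy to the behaviour of a single monotone Markov chain on $\Z_{\ge0}$, coming from the branching/edge--crossing representation of the walk that underlies the proofs of \Theref{thm:RightLeftTransienceCutoff} and \Theref{thm:BallisticityWhenNonCritical}. Because $L=1$, \Theref{RorL1} shows the walk is $\P_\omega$--a.s.\ not transient to $-\infty$ (in the negative--feedback case this is where one uses that $\omega$ is constant near $-\infty$), so recurrence is the only alternative to transience to $+\infty$. Running the walk from $0$ until the first hitting time $T_n$ of site $n$ and letting $V^{(n)}_k$ be the number of leftward traversals of the edge $(k,k+1)$ before $T_n$, one has $V^{(n)}_0\uparrow V^{(\infty)}_0$, with the walk transient to $+\infty$ iff $V^{(\infty)}_0<\infty$ $\P_\omega$--a.s.\ and recurrent iff $V^{(\infty)}_0=\infty$ $\P_\omega$--a.s. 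In the region where $\omega$ is constant, $(V^{(n)}_{n-1},V^{(n)}_{n-2},\dots)$ is a time--homogeneous Markov chain in the spatial variable $k$, monotone in its starting value, started from $V^{(n)}_{n-1}=0$; so the walk is recurrent precisely when this chain fails to be positive recurrent, equivalently when $V^{(n)}_0\to\infty$ in probability as $n\to\infty$.

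First I would make the transition kernel explicit using that, since $L=1$, a single leftward jump resets a site to $(q,0)$, so the visits of the walk to a fixed site $k$ split into \emph{epochs} terminated by leftward departures. An epoch begun from configuration $(q,0)$ consists of $K$ consecutive rightward departures, where $\P(K\ge m)=q^m$ for $m\le R$ and $\P(K\ge R+j)=q^Rp^j$ for $j\ge0$; the first epoch at a site begun from $\omega(k)=(q,i)$ (with $(p,0)$ treated as ``$i=R$'') obeys the same law with $R$ replaced by $R-i$. Counting the leftward departures from $k$ needed to supply the $V_k+1$ rightward departures that occur there gives $V_{k-1}=e^*-1$ with $e^*=\min\{e:\ K_1+\dots+K_e\ge V_k+1\}$, $K_1$ having the initial--configuration law and $K_2,K_3,\dots$ i.i.d.\ with the generic law, mean $\kappa:=\E K=\tfrac{q(1-q^R)}{1-q}+\tfrac{q^Rp}{1-p}$. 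A short computation shows the hypothesis $\alpha=1/2$, i.e.\ $p=p_0=\tfrac{1-2q+q^{R+1}}{1-2q+q^R}$ of \eqnref{p_0}, is equivalent to $\kappa=1$, and then $\E K_1=\kappa_i=\tfrac{1-2q+q^{\,i+1}}{q^{\,i}(1-q)}$ (so $\kappa_0=1$). By Wald's identity and the discrete renewal theorem, $\E[V_{k-1}\mid V_k=v]=v+1+\E[O_v]-\kappa_i$ with the overshoot satisfying $\E[O_v]\to\theta:=\tfrac12(\E[K^2]-1)$ (independent of $i$), so the chain has asymptotic drift $m:=1+\theta-\kappa_i$.

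The core of the argument is then to show $m$ and $P_{R,i}(q)$ have the same sign: substituting $p=p_0$ into the closed forms for $\E[K^2]$ and $\kappa_i$, one verifies $q^{\,c}(1-q)^{\,d}\,m=P_{R,i}(q)$ of \eqnref{poly} for suitable $c,d\ge0$ (for $R=2$ one finds $c=d=2$ and $P_{2,0}(q)=(1-q)^4$, $P_{2,1}(q)=(1-q)^2(3q^2-3q+1)$, $P_{2,2}(q)=q(1-q)^2(3q-1)$, all $\ge0$ except the last for $q<1/3$). Granting this: $m<0$ (i.e.\ $P_{R,i}(q)<0$) makes the chain positive recurrent by a Foster--Lyapunov argument with Lyapunov function $V(v)=v$, so $V^{(\infty)}_0<\infty$ and the walk is transient to $+\infty$; $m>0$ makes the chain drift to $+\infty$, so the walk is recurrent; and when $m=0$ the geometric tails of $K$ make $\E[O_v]-\theta$ exponentially small in $v$, so although the chain's variance grows linearly its drift is $o(1/v)$ and Lamperti's criterion puts it in the recurrent regime, whence the walk is again recurrent. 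Part (i) is the case $i=0$: criticality forces $\kappa_0=1$, so $m=\theta=\tfrac12(\E[K^2]-1)>0$ (strictly, $K$ being non--degenerate), and the walk is recurrent for every such $\omega$. For the remaining quantitative claims in (iii) I would analyse $P_{R,R}$ directly: $P_{R,R}(q)/q$ has the coefficient sign pattern $(-,+,+,-,+)$, so by Descartes' rule it has at most three positive roots; since $q=1$ is a double root and $P_{R,R}(1/2)=2^{-(R+2)}>0$ while $P_{R,R}(0)=0$ with $P_{R,R}(q)<0$ for small $q>0$, the third root is the unique $q_*(R)\in(0,1/2)$, with $P_{R,R}<0$ on $(0,q_*(R))$ and $>0$ on $(q_*(R),1)$; finally $P_{R,R}(q)\to q(2q-1)<0$ as $R\to\infty$ for each fixed $q\in(0,1/2)$, together with $P_{R,R}(1/2)>0$, gives $q_*(R)\to1/2$.

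The main difficulty is twofold. Conceptually, making the reduction rigorous in the critical regime is delicate: one must justify the equivalence ``walk recurrent $\iff$ chain not positive recurrent'', including the null--recurrent/diffusive borderline $m=0$ and the classification lemma excluding pathological (non--transient, non--recurrent) behaviour, and one must show by monotonicity and coupling that the finitely many sites near the origin where $\omega$ is not constant — and, in the negative--feedback case, the whole left half--line — cannot alter the dichotomy, since by the time the spatial variable reaches that region the chain has already equilibrated (if $m<0$) or escaped to large values (if $m\ge0$). Computationally, the exact identification $q^{\,c}(1-q)^{\,d}m=P_{R,i}(q)$ is where the specific polynomials \eqnref{poly}--\eqnref{polyR} are forced, and is the step most vulnerable to algebraic error.
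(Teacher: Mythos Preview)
Your approach is correct and genuinely different from the paper's. The paper works with the \emph{right jumps} chain $(Z_x)_{x\ge0}$ (the number of right jumps from site $x$ before the walk, started at $1$, hits $0$) and applies the Kozma--Orenshtein--Shinkar criterion (Lemma~\ref{lem:ThetaLessGreater1}): with $L=1$ the step law is $U(n)=\sum_{j=1}^n\Gamma_j$ with $\Gamma_1\sim S_i$, $\Gamma_2,\dots,\Gamma_n$ i.i.d.\ $\sim S_0$, so $\rho(n)=\E S_i-1$ is constant, $\nu(n)=\E S_0^2-1+O(1/n)$, and the sharp dichotomy is $\theta:=\dfrac{2(\E S_i-1)}{\E S_0^2-1}\lessgtr 1$. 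You instead use the \emph{left jumps} chain $(W_n)$ (your $(V_k)$), which is exactly the chain of Proposition~\ref{leftjumps}, and classify its positive recurrence via a drift computation based on the renewal overshoot formula $\E O_v\to\tfrac12(\E K^2-1)$; your drift is $m=\tfrac12(\E S_0^2+1)-\E S_i$. The two criteria are identical, since $\theta>1\iff m<0$; indeed one checks (using the paper's formulas for $\E S_i$ and $\E S_0^2$) that $q^R(1-q)^2\,m=P_{R,i}(q)$ exactly, so your claimed identity $q^c(1-q)^d m=P_{R,i}(q)$ holds with $c=R$, $d=2$.

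What each route buys: the paper's use of Lemma~\ref{lem:ThetaLessGreater1} handles the boundary case $\theta=1$ (your $m=0$) automatically, because the criterion there is $\theta(n)\le 1+O(1/n)$ versus $\lim\theta(n)>1$, and $\theta(n)=\theta+O(1/n)$. In your approach this boundary needs a separate Lamperti-type argument for \emph{non--positive--recurrence}: with $m=0$ the drift is exponentially small and the second moment $\E[(V_{k-1}-v)^2\mid V_k=v]\sim cv$, so $2v\mu(v)/\sigma^2(v)\to0$ and the chain is null recurrent, hence not positive recurrent, hence the walk is recurrent by Proposition~\ref{leftjumps}. This is fine but is the one place where you must be precise about which Lamperti statement you invoke (the one for positive recurrence, not merely recurrence/transience). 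On the other hand, your route is more self-contained (it avoids importing the KOS theorem) and makes the renewal structure transparent: the polynomial $P_{R,i}$ literally \emph{is} the drift of the backward chain, up to the positive factor $q^R(1-q)^2$. Your Descartes/double-root analysis of $P_{R,R}$ is also a clean alternative to the paper's factorization $P_{R,R}(q)=q(1-q)^2\widetilde P_{R,R}(q)$; both give the unique root $q_*(R)\in(0,1/2)$ and $q_*(R)\to1/2$, though the paper additionally extracts monotonicity of $q_*(R)$ from the factored form.
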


\noindent \bf Remark 1. \rm\
In the case of positive feedback, $q < p$, one can also determine between right transience and
recurrence for some environments that are not constant in a neighborhood of $+\infty$,
using the comparison lemma given in section \ref{subsec:ComparisonOfEnvironments}.
In particular, $(p,0)$ is the most favorable environment for right transience in the positive feedback
case, so if the random walk is not right transient with the $(p,0)$ environment in a neighborhood
of $+\infty$, then it is not right transient for any initial environment.
\medskip

\noindent \bf Remark 2. \rm\
$P_{R,R}(q)$ is the same polynomial one obtains by substituting $i = R$ into the definition of $P_{R,i}(q)$
in \eqref{poly}.
\medskip

\noindent \bf Remark 3. \rm\
For any $1 \leq i \leq R$, $P_{R,i}(q)$ has a double root at $1$. $P_{R,R}(q)$ also has
a single root at $0$ and factors as $P_{R,R}(q) = q(1-q)^2 \Pt_{R,R}(q)$ where
\begin{align}
\label{eq:FactoredPolyR}
\Pt_{R,R}(q)=-1+\sum_{j=1}^{R-3}jq^{j+1}+(2R-1)q^{R-1}.
\end{align}
Here, the sum is defined to be 0, for $R = 2,3$.
\medskip

\noindent \bf Remark 4. \rm\
Using \eqref{eq:FactoredPolyR} one finds that $q_*(2) = 1/3$ and $q_*(3) = 1/\sqrt{5} \approx 0.447$. Using
a combination of analytical techniques and computer generated plots one also finds the following behavior
for $P_{R,i}$, $1 \leq i \leq R-1$. For $R=2,3,4$, $P_{R,i}(q) \geq 0$ for all $1 \leq i \leq R-1$ and $q \in (0,1)$.
For $R = 5,6$, $P_{R,i}(q) \geq 0$ for all $1 \leq i \leq R-2$ and $q \in (0,1)$. However, $P_{5,4}(q) < 0$
if (and only if) $q \in (a,b)$, where $a\approx.410$ and $b\approx.473$, and $P_{6,5}(q) < 0$ if
(and only if) $q \in (a,b)$, where $a\approx.391$ and $b\approx.490$. For $i = 5,6$ there
are ranges of $q$ for which $P_{7,i}(q) < 0$.
\medskip

The next proposition characterizes asymptotic properties of the function $P_{R,i}(q)$ in the limit of large $R$,
for two different cases of $i = i_R$. In the first case, $R-i_R$ grows to infinity, so the process must jump right
many consecutive times from a given site to switch it to the $p$-mode, starting in the $(q,i_R)$ initial
environment. In the second case, $i_R = R-k$, for a fixed $k$, so the process need jump right only
$k$ consecutive times from any site to switch it to the $p$-mode, starting in the $(q,i_R)$ initial
environment. The proof of both cases is straightforward, and is left to the reader. 

\begin{Prop} ~\
\label{Rlarge}
\begin{itemize}
\item[(i)] If $(R - i_R) \rightarrow \infty$ as $R \rightarrow \infty$ then, for any fixed $q \in (0,1)$,
$P_{R,i_R}(q) > 0$ for all sufficiently large $R$.
\item[(ii)] Let $i_R = R-k$, and define
\begin{align}
\label{Qpoly}
Q_k(q)= (1-2q)^2 -q^k + 3q^{k+1} - 2q^{k+2}.
\end{align}
If $Q_k(q) > 0$ then $P_{R,i_R}(q) > 0$ for sufficiently large $R$, and if
$Q_k(q) < 0$ then $P_{R,i_R}(q) < 0$ for sufficiently large $R$.
\end{itemize}
\end{Prop}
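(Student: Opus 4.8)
The plan is to regroup $P_{R,i}(q)$ so that all of its $R$-dependence is confined to one summand that decays geometrically, and then simply read off the limit of what remains. Collecting in \eqref{poly} the three monomials carrying a factor $q^R$ and the three carrying a factor $q^{R-i}$, we may rewrite
\begin{align*}
P_{R,i}(q) = q^R\big[(2R-1)q^2-(3R+1)q+(R+1)\big] + q^{R-i}\big(3q-2q^2-1\big) + (1-2q)^2 .
\end{align*}
Write $A_R(q)$ for the first summand and $B_R(q)$ for the second. For fixed $q\in(0,1)$, $A_R(q)$ is a polynomial in $R$ of degree at most one times $q^R$, so $A_R(q)\to 0$; and $B_R(q)$ is a bounded quantity (indeed $|3q-2q^2-1|\le 1$ on $[0,1]$) times $q^{R-i}$.

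For part (i), fix $q\in(0,1)$ and any sequence $i_R$ with $1\le i_R\le R$ and $R-i_R\to\infty$. Then $q^{R-i_R}\to 0$, so $B_R(q)\to 0$, and together with $A_R(q)\to 0$ this gives $P_{R,i_R}(q)\to(1-2q)^2$. When $q\neq 1/2$ the limit is strictly positive, so $P_{R,i_R}(q)>0$ for all large $R$, as claimed. The only case not settled this way is $q=1/2$, where the limit $(1-2q)^2$ equals $0$; here I would substitute $q=1/2$ directly, observing that $3q-2q^2-1$ vanishes there so that $B_R(1/2)\equiv 0$, while the bracket defining $A_R$ collapses to the constant $\tfrac14$, yielding $P_{R,i}(1/2)=(1/2)^{R+2}>0$ for every $R$ and every $i$. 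This $q=1/2$ verification is the only mildly delicate point of the argument, and it is a one-line computation.

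For part (ii), take $i_R=R-k$ with $k$ fixed, so that $R-i_R=k$ is constant and $B_R(q)=q^k(3q-2q^2-1)$ no longer depends on $R$; expanding, $B_R(q)+(1-2q)^2$ is exactly the polynomial $Q_k(q)$ of \eqref{Qpoly}. Since $A_R(q)\to 0$ as before, $P_{R,R-k}(q)\to Q_k(q)$ for each fixed $q\in(0,1)$, and hence $Q_k(q)>0$ forces $P_{R,i_R}(q)>0$ for all large $R$ while $Q_k(q)<0$ forces $P_{R,i_R}(q)<0$ for all large $R$. No step here is a real obstacle: the whole proof is the regrouping above together with the elementary fact that $R^c q^R\to 0$ for $q\in(0,1)$, with the single small exception at $q=1/2$ handled separately in part (i).
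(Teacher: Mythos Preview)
Your proof is correct and is exactly the straightforward argument the paper has in mind; the paper itself does not write out a proof (it simply says ``the proof of both cases is straightforward, and is left to the reader''), and your regrouping $P_{R,i}(q)=A_R(q)+B_R(q)+(1-2q)^2$ together with the one-line check at $q=1/2$ is the natural way to fill in the details.
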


\noindent \bf Remark.\rm\
The polynomial $Q_k(q)$ factors as $(2q-1)(2q-1+q^k-q^{k+1})$. Using this representation it
is not hard to verify the following facts: $Q_k(q)>0$ for $q>1/2$, and there exists an
$a_k\in(\frac12(3-\sqrt5),1/2)\approx(.382,1/2)$ such that $Q_k(q)>0$ for $q\in(0,a_k)$ and
$Q_k(q)<0$ for $q\in(a_k,1/2)$. Furthermore, $a_k$ is increasing in $k$ and $\lim_{k\to\infty}a_k=1/2$.
\medskip

Corollary \ref{RandL1} showed that if $R=L=1$ then in the critical case $p = 1-q$ the random walk is always recurrent
(assuming constant initial environment in neighborhoods of $\pm \infty$ in the negative feedback regime). 
When $R=L=2$ the behavior in the critical case is much more complicated; in particular,
for appropriate initial environments, there is a positive probability for both transience
to $+\infty$ and transience to $-\infty$.

\begin{The}\label{LR2crit}
Let $R=L=2$ and assume that $p=1-q$; so $\alpha=\frac12$. In the negative feedback case, $q>\frac12$, assume
also that the initial environment $\omega$ is constant in a neighborhood of $+\infty$ and in a neighborhood of $-\infty$.
Let
$$
q^*_1=\frac{\sqrt{13}-3}2\approx0.303
$$
 and let
$$
q^*_2\approx .682\ \text{ be the unique root in $(0,1)$ of } q^3+q-1.
$$
\begin{itemize}
\item[(i)] Let $q<q_1^*$.

a. If $\omega(x) =(p,0)$ in a neighborhood of $+\infty$ and
$\omega(x) \neq(q,0)$ for any $x$ in a neighborhood of $-\infty$, then
the random walk $(X_n)$ is $\mathbb{P}_\omega$ a.s. transient to $+\infty$.

b. If $\omega(x) =(q,0)$ in a neighborhood of $-\infty$ and
$\omega(x) \neq(p,0)$ for any $x$ in a neighborhood of $+\infty$, then
the random walk $(X_n)$ is $\mathbb{P}_\omega$ a.s. transient to $-\infty$.

c. If $\omega(x) \neq(p,0)$ for any $x$ in a neighborhood of $+\infty$ and
$\omega(x) \neq(q,0)$ for any $x$ in a neighborhood of $-\infty$, then
the random walk $(X_n)$ is $\mathbb{P}_\omega$ a.s. recurrent.

d. If $\omega(x) =(p,0)$ in a neighborhood of $+\infty$ and
$\omega(x) = (q,0)$ in a neighborhood of $-\infty$, then
$\mathbb{P}_\omega(X_n\to+\infty) = 1 - \mathbb{P}_\omega(X_n\to-\infty) \in (0,1).$

\item[(ii)] Let $q>q^*_2$. 
Then (a)-(d) of (i)  hold with the roles of $(p,0)$ and $(q,0)$ reversed.

\item[(iii)] Let $q\in[q^*_1,q^*_2]$.
Then the random walk $(X_n)$ is  $\mathbb{P}_\omega$ a.s. recurrent.
\end{itemize}
\end{The}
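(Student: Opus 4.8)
The plan is to combine three ingredients: the reduction of the recurrence/transience dichotomy to the asymptotics of an auxiliary crossing-number process (established in the preceding sections), a Lamperti-type criterion for that process in the critical regime, and the environment comparison lemma of Section~\ref{subsec:ComparisonOfEnvironments}. A useful simplification is the symmetry of the model when $p=1-q$ and $R=L$: reversing space and applying the involution $(p,\cdot)\leftrightarrow(q,\cdot)$ maps the walk to itself while interchanging the two directions of transience and interchanging the configurations $(p,0)$ and $(q,0)$. This immediately yields (b) from (a) within each of (i) and (ii), and reduces part (ii) to the negative-feedback version ($q>1/2$) of the analysis behind part (i).

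The heart of the matter is the analysis of the auxiliary process for $R=L=2$. This is a branching process with migration whose offspring and migration laws are modulated by the finite Markov chain describing the successive configurations of a single site through $\Lambda=\{(p,0),(p,1),(q,0),(q,1)\}$. Since $\alpha(1-q,q,2,2)=1/2$ by hypothesis, this chain has asymptotically vanishing drift, so its recurrence/transience is decided at the next order. I would compute the stationary distribution of the four-state modulating chain; then the mean and variance of the one-step increment of the crossing count, including the order $1/\ell$ drift contribution coming from the fluctuations of the modulating chain (which a naive ``average the offspring law'' computation would miss); and finally assemble the Lamperti quantity. For a ``neutral'' boundary environment --- one that is not eventually equal to the most favorable configuration --- this quantity is $\le 0$ for every $q\in(0,1)$, so the crossing chain is recurrent and the walk is $\P_\omega$ a.s.\ recurrent; this gives part (i)(c), the neutral case of (iii), and the neutral statements implicit in (ii). When instead the boundary configuration near $+\infty$ is the most favorable one --- $(p,0)$ when $q<1/2$, $(q,0)$ when $q>1/2$ --- the additional migration input shifts the chain, and a direct computation shows it becomes transient (hence the walk right-transient) precisely when an explicit polynomial in $q$ is negative; this polynomial has the sign of $q^{2}+3q-1$ in the positive-feedback case, whose positive root is $q_1^*=(\sqrt{13}-3)/2$, and, after the symmetry, the sign of $q^{3}+q-1$ in the negative-feedback case, whose root in $(0,1)$ is $q_2^*$. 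Thus the $(p,0)$ boundary forces right transience exactly when $q<q_1^*$, the $(q,0)$ boundary forces right transience exactly when $q>q_2^*$, and for $q\in[q_1^*,q_2^*]$ neither extreme boundary tips the chain --- whereupon the comparison lemma, dominating all remaining environments, yields part (iii).

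The remaining assertions follow from this dichotomy. For part (i)(a), with $(p,0)$ near $+\infty$ and $q<q_1^*$ the right crossing chain stays bounded, so the walk is not transient to $-\infty$ and not recurrent; the hypothesis that $\omega(x)\ne(q,0)$ throughout a neighborhood of $-\infty$ prevents the left crossing chain from staying bounded (by the comparison lemma, $(q,0)$ is the only boundary configuration that could achieve this), so the walk must be transient to $+\infty$; part (i)(b) is the mirror image. For part (i)(d), with $(p,0)$ near $+\infty$ and $(q,0)$ near $-\infty$, a monotone coupling with the environments of (i)(a) and (i)(b) gives $\P_\omega(X_n\to+\infty)>0$ and $\P_\omega(X_n\to-\infty)>0$; to exclude recurrence one argues that, after the walk first leaves a fixed finite window, each long excursion into the all-$(p,0)$ half-line escapes to $+\infty$ with probability bounded below --- uniformly, because the degradation of the favorable boundary is self-limiting, a degraded $(q,0)$ site reverting to $(p,0)$ after two rightward jumps --- so infinitely many such excursions cannot occur, and therefore $\P_\omega(X_n\to+\infty)=1-\P_\omega(X_n\to-\infty)\in(0,1)$. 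Part (ii) is the same scheme carried out for $q>1/2$, and the comparison lemma completes part (iii) by reducing arbitrary environments to the extreme ones already treated.

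The step I expect to be the main obstacle is the bulk computation: setting up the modulated branching process correctly for $R=L=2$, determining the stationary law of the four-state configuration chain, and --- the genuinely delicate point --- capturing the order $1/\ell$ drift term produced by the modulation, before carrying the algebra through to see that the relevant sign changes occur exactly at the roots of $q^{2}+3q-1$ and $q^{3}+q-1$. A secondary difficulty is making the self-limiting degradation in part (i)(d) quantitative enough to obtain a uniform lower bound on the one-excursion escape probability.
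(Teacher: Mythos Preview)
Your proposal is correct and follows essentially the same route as the paper: reduce to the right-jumps chain $(Z_x)$ in a constant half-line environment, apply the Lamperti criterion (Lemma~\ref{lem:ThetaLessGreater1}) via $\theta=2\rho/\nu$, compute $\rho$ and $\nu$ through the modulating configuration chain, and combine with symmetry and the comparison lemma. Two small points of divergence: the paper reduces the modulating chain immediately to its two recurrent states $\{(p,1),(q,0)\}$ (the states $(p,0),(q,1)$ can only occur as $\omega^1$), which makes the eigen-decomposition a $2\times2$ calculation rather than a $4\times4$ one; and for part (d) the paper packages your ``self-limiting degradation'' observation into a clean general lemma (Lemma~\ref{lem:MustBeTransientTopmInfinity}) stating that if the environment is constant near $+\infty$ and $\P_\omega(X_n\to+\infty)>0$, then transience in \emph{some} direction has probability one---the point being that at each first-hitting time of a fresh site the environment to the right is pristine, so the escape probability is a fixed $\delta>0$, and a stopping-time argument finishes. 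Your ``order $1/\ell$ drift'' concern is exactly the dependence of the constant term in $\rho(n)$ on the initial state $\omega^1$, and the paper's computation confirms your polynomials: $\theta^{(p,0)}>1\iff 1-3q-q^2>0$ and $\theta^{(q,0)}>1\iff q^3+q-1>0$, while $\theta^{(p,1)},\theta^{(q,1)}\le1$ for all $q$.
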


\noindent \bf Remark.\rm\ 
The proof of Theorem \ref{LR2crit} relies on the eigen-decomposition of a $2 \times 2$ matrix. 
In principal, one could also apply similar methods to characterize the transience/recurrence 
properties for general $R, L \geq 2$. Specifically, to determine if there is positive probability of right transience 
one needs to diagonalize an $L \times L$ matrix, and to determine if there is positive probability of left transience 
one needs to diagonalize an $R \times R$ matrix. For $R = L = 3$ this is possible analytically, but in general it is not.
Nevertheless, it could be done numerically for reasonably sized $R$ and $L$. In the case 
$R \not= L$, one would also need to determine numerically the critical value/s of $p$ such that $\alpha(p,q,R,L) = 1/2$, 
as the entries of these matrices depend on both $p$ and $q$. \medskip

We close this introductory section with an \emph{open problem}. In the non-critical case, Theorem
\ref{thm:BallisticityWhenNonCritical} shows that the random walk is ballistic. The open problem is to 
show that in the critical case, if the random walk is transient, then it is not ballistic; that is,
$\lim_{n\to\infty}\frac{X_n}n=0$ a.s. Note that if this were not always true, then in light of 
Theorem \ref{thm:BallisticityWhenNonCritical}, there would be cases (that is, choices of $R$ and $L$) 
for which the  speed would have a discontinuity as a function of $p$ and $q$. \\

The remainder of the paper is organized as follows. In section \ref{sec:Preliminaries} we introduce
some important constructions that will be central to our proofs and establish a number of simple lemmas. In
section \ref{sec:NoncriticalCase} we prove Theorems \ref{thm:RightLeftTransienceCutoff}--\ref{thm:R1Speed}
concerning the behavior of the random walk $(X_n)$ in the noncritical case. In section
\ref{sec:CriticalCase} we prove Theorems \ref{recurrpossible}--\ref{LR2crit} concerning the behavior
of the random walk in the critical case. Finally, in section \ref{sec:AnalysisOfAlpha} we prove Proposition
\ref{prop:PropertiesOfAlpha} which characterizes properties of the important function $\alpha$.

\section{Preliminaries}
\label{sec:Preliminaries}

In this section we introduce a basic framework for proving the theorems stated above and establish a number
of useful lemmas. Section \ref{subsec:AuxilliaryMarkovChains} gives constructions of the single site Markov chains
$(Y_n^x)_{n \in \N}$ and the right jumps Markov chain $(Z_x)_{x \geq 0}$, which will be the primary tools used in the proofs of
Theorems \ref {thm:RightLeftTransienceCutoff}, \ref{thm:BallisticityWhenNonCritical}, \ref{recurrpossible},  \ref{L1andEnvironment} and \ref{LR2crit}.
Section \ref{subsec:BasicLemmas} gives three simple lemmas that will be used in a number of places. The first two concern conditions
for transience, and the other relates hitting times to speed. Finally, section \ref{subsec:ComparisonOfEnvironments} gives an important
lemma comparing the possibility of transience in different environments. 

\subsection{Auxilliary Markov Chains}
\label{subsec:AuxilliaryMarkovChains}

\subsubsection{The Single Site Markov Chains $(Y_n^x)_{n \in \N}$}
\label{subsubsec:SingleSiteMarkovChain}

Let $M$ be the stochastic transition matrix on the set of single site configurations
$\Lambda$, with nonzero entries defined as follows:
\begin{align*}
& M_{(p,i)(p,0)} = p, M_{(p,i)(p,i+1)} = 1-p~,~ \mbox{ for } 0 \leq i \leq L-2. \\
& M_{(p,L-1)(p,0)} = p, M_{(p,L-1)(q,0)} = 1-p. \\
& M_{(q,i)(q,0)} = 1-q, M_{(q,i)(q,i+1)} = q~,~ \mbox{ for } 0 \leq i \leq R-2. \\
& M_{(q,R-1)(q,0)} = 1-q, M_{(q,R-1)(p,0)} = q.
\end{align*}
For $x \in \Z$, let $(Y_n^x)_{n \in \N}$ be the Markov chain with state space $\Lambda$, transition matrix $M$, and initial
state $\omega(x)$. We refer to the chain $(Y_n^x)_{n \in \N}$ as the \emph{single site Markov chain} at $x$. It is
the Markovian sequence of configurations at site $x$ that would occur if $x$ were to be visited infinitely often. That is,
\begin{align*}
\P(C_{n+1}^x = \lambda'|C_{n}^x = \lambda, N_x \geq n + 1)
= M_{\lambda \lambda'},~ \lambda, \lambda' \in \Lambda
\end{align*}
where $N_x$ is the total number of visits to site $x$, as above, and
$C_n^x$ is the configuration at site $x$ immediately after the $n$-th visit.

The \emph{extended single site chain} at $x$, $(\Yh_n^x)_{n \in \N} = (Y_n^x, J_n^x)_{n \in \N}$, is the Markov chain
whose states are pairs $(\lambda,j)$, where $\lambda \in \Lambda$ denotes the current configuration at site $x$ and
$j \in \{1,-1\}$ represents the next jump from $x$ ($1$ for right, $-1$ for left). The state space of this chain is
$\Lambdah = \Lambda \times \{1,-1\}$ and the transition matrix $\Mh$ is defined by:
\begin{align*}
& \Mh_{((p,i),1)((p,0),1)} = p, \Mh_{((p,i),1)((p,0),-1)} = 1-p ~,~\mbox{ for } 0 \leq i \leq L-1. \\
& \Mh_{((p,i),-1)((p,i+1),1)} = p, \Mh_{((p,i),-1)((p,i+1),-1)} = 1-p ~,~\mbox{ for } 0 \leq i \leq L-2. \\
& \Mh_{((p,L-1),-1)((q,0),1)} = q, \Mh_{((p,L-1),-1)((q,0),-1)} = 1-q. \\
& \Mh_{((q,i),-1)((q,0),1)} = q, \Mh_{((q,i),-1)((q,0),-1)} = 1-q ~,~\mbox{ for } 0 \leq i \leq R-1. \\
& \Mh_{((q,i),1)((q,i+1),1)} = q, \Mh_{((q,i),1)((q,i+1),-1)} = 1-q ~,~\mbox{ for } 0 \leq i \leq R-2. \\
& \Mh_{((q,R-1),1)((p,0),1)} = p, \Mh_{((q,R-1),1)((p,0),-1)} = 1-p.
\end{align*}
The initial state $\Yh_1^x$ for the chain has the following distribution:
\begin{itemize}
\item If $\omega(x) = (p,i)$, for some $0 \leq i \leq L-1$, then
\begin{align}
\label{eq:DistFrom_pSite}
\P\left(\Yh_1^x = ((p,i),1)\right) = p,~ \P\left(\Yh_1^x = ((p,i),-1)\right) = 1-p.
\end{align}
\item If $\omega(x) = (q,i)$, for some $0 \leq i \leq R-1$, then
\begin{align}
\label{eq:DistFrom_qSite}
\P\left(\Yh_1^x = ((q,i),1)\right) = q,~ \P\left(\Yh_1^x = ((q,i),-1)\right) = 1-q.
\end{align}
\end{itemize}
By construction, the sequence of site configurations $(Y_n^x)$ obtained by projection from
this extended Markov chain state sequence $(\Yh_n^x)$ with transition matrix $\Mh$ and
initial state distributed according to (\ref{eq:DistFrom_pSite}) and (\ref{eq:DistFrom_qSite})
has the same law as above, when defined directly by the transition matrix $M$
with initial state $\omega(x)$. \\

\noindent
\emph{Coupling to the Random Walk $(X_n)$} \\

For a given initial position $x_0$ and initial environment $\omega = \{\omega(x)\}_{x \in \Z}$ one can
construct the random walk $(X_n)_{n \geq 0}$ according to the following two step procedure, similar
to that given in \cite{Amir2013} for cookie random walks.
\begin{enumerate}
\item Run the extended single site Markov chains $(\Yh_n^x)_{n \in \N}$ at each site $x$ independently.
\item Walk deterministically from the initial point $x_0$ according to the corresponding ``jump pattern''
$\{J_n^x\}_{n \in \N, x \in \Z}$. That is, upon the the $k_{th}$ visit to site $x$, the walk jumps right
if $J_k^x = 1$ and left if $J_k^x = -1$. Formally, we have:
\begin{itemize}
\item $X_0 = x_0$.
\item For $n \geq 0$, $X_{n+1} = X_n + J_{K_n}^{X_n}$ where $K_n = |\{0 \leq m \leq n: X_m = X_n\}|$.
\end{itemize}
\end{enumerate}
By definition of the extended single site chains, the random walk $(X_n)_{n \geq 0}$ constructed by this two step
procedure will have the correct law, and in the sequel we always assume our random walk $(X_n)$ to be
defined in this fashion. We also denote by $\P_{\omega}$ the probability measure for the extended single
site chains, run independently at each site $x$, with initial environment $\omega = \{\omega(x)\}_{x \in \Z}$.
This is a slight abuse of notation since the probability measure $\P_{\omega} \equiv \P_{\omega,0}$ introduced
in section \ref{sec:Intro} also specifies the initial position of the random walk as $X_0 = 0$. However, things
should be clear from the context. \\

\noindent
\emph{Stationary Distribution}\\

Since $\Lambda$ is finite and $M$ is an irreducible transition matrix, there exists a unique stationary probability distribution
$\pi$ on $\Lambda$ satisfying $\pi = \pi M$. Solving the linear system $\{\pi = \pi M, \sum_{\lambda \in \Lambda} \pi_{\lambda} = 1\}$,
one obtains the following explicit form for $\pi$ (see Appendix \ref{subsec:StationaryDistributionSingleSiteMC}):
\begin{align}
\label{eq:pipi_piqi}
\pi_{(p,i)} & = \frac{p(1-q)q^R}{(1-q)q^R(1 - (1-p)^L) + p(1-p)^L(1-q^R)} \cdot (1-p)^i ~,~ 0 \leq i \leq L-1. \nonumber \\
\pi_{(q,i)} & = \frac{p(1-q)(1-p)^L}{(1-q)q^R(1 - (1-p)^L) + p(1-p)^L(1-q^R)} \cdot q^i ~,~ 0 \leq i \leq R-1.
\end{align}
In particular,
\begin{align}
\label{eq:pippiq}
\pi_p & \equiv \sum_{i=0}^{L-1} \pi_{(p,i)} =  \frac{(1-q)q^R(1 - (1-p)^L)}{(1-q)q^R(1 - (1-p)^L) + p(1-p)^L(1-q^R)} ~, \mbox{ and } \nonumber \\
\pi_q & \equiv \sum_{i=0}^{R-1} \pi_{(q,i)} =  \frac{p(1-p)^L(1-q^R)}{(1-q)q^R(1 - (1-p)^L) + p(1-p)^L(1-q^R)}.
\end{align}
So, defining $\phi : \Lambdah \rightarrow \{0,1\}$ by $\phi(\lambda,j) = \indicator\{j= 1\}$, we have
\begin{align}
\label{eq:ExpectedValuePhiEqualAlpha}
\E_{\pih}(\phi) = p \cdot \pi_p + q \cdot \pi_q = \alpha ,
\end{align}
where $\pih$ is the stationary distribution for the transition matrix $\Mh$, and $\alpha \in (0,1)$ is as
in Theorem \ref{thm:RightLeftTransienceCutoff}.  It follows, by the ergodic theorem for finite-state
Markov chains, that the limiting fraction of right jumps in the sequence $(J_n^x)_{n \in \N}$
is equal to $\alpha$ a.s., for each site $x$.

\subsubsection{The Right Jumps Markov Chain $(Z_x)_{x \geq 0}$}
\label{subsubsec:RightJumpsMarkovChain}

The \emph{right jumps Markov chain} $(Z_x)_{x \geq 0}$ is defined as follows:
\begin{itemize}
\item $Z_0 = 1$.
\item For $x \geq 1$,
\begin{align}
\label{eq:DefRightJumpsMC}
Z_x = \Theta_x - Z_{x-1} ~~\mbox{ where }~~ \Theta_x = \inf \Big\{n \geq 0: \sum_{m=1}^{n} \indicator\{J_m^x = -1\} = Z_{x-1} \Big\},
\end{align}
\end{itemize}
with the convention that $\sum_{m=1}^0\indicator\{J_m^x = -1\} =0$.
That is, $\Theta_x$ is the first time that there are $Z_{x-1}$ left jumps in the sequence $(J_n^x)_{n \in \N}$, and
$Z_x = \Theta_x - Z_{x-1}$ is the total number of right jumps in the sequence $(J_n^x)_{n \in \N}$
before there are $Z_{x-1}$ left jumps.

For an initial environment $\omega$, we denote the probability measure for the right jumps chain $(Z_x)$
also by $\P_{\omega}$. This is simply the projection of the measure $\P_{\omega}$ for the extended single site
Markov chains, of which the right jumps chain is a deterministic function. \\

\noindent
\emph{Relation to the Random Walk $(X_n)$} \\

We denote by $T_x$ the first hitting time of site $x$,
\begin{align*}
T_x = \inf \{n \geq 0 : X_n = x\} ~,~ x \in \Z.
\end{align*}
Also, we say that a jump pattern $\{J_n^x\}_{n \in \N, x \in \Z}$ is \emph{non-degenerate} if
\begin{align*}
|\{n : J_{n+1}^x \not= J_n^x \}| = \infty ~,~ \mbox{for each } x \in \Z.
\end{align*}
Clearly, for any initial environment $\omega$, the corresponding jump
pattern $\{J_n^x\}_{n \in \N, x \in \Z}$ is non-degenerate $\P_{\omega}$ a.s.
The following important proposition relating transience/recurrence of the random walk
$(X_n)$ to survival of the Markov chain $(Z_x)$ is shown in \cite{Amir2013}\footnotemark{}.
\footnotetext{The terminology there is slightly different. The jump pattern is referred to as an 
\emph{arrow environment} and denoted by $a$. After the arrow environment is 
chosen (according to some random rule which differs depending on the model) 
the walker follows the directional arrows deterministically on its walk.}

\begin{Prop}
\label{prop:SurvivalZxTransienceXn}
If $X_0 = 1$ and $\{J_n^x\}_{n \in \N, x \in \Z}$ is non-degenerate, then
\begin{align}
\label{eq:SurvivalZxTransienceXn}
T_0 = \infty ~\mbox{ if and only if } Z_x > 0, \mbox{ for all } x > 0.
\end{align}
Moreover, if $T_0 < \infty$ then, for each $x \in \N$, $Z_x$ is equal to the number of right
jumps of the process $(X_n)$ from site $x$ before hitting 0.
\end{Prop}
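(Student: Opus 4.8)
The plan is to treat this as a deterministic fact about a fixed non-degenerate jump pattern $\{J_n^x\}_{n\in\N,\,x\in\Z}$ and the walk $(X_n)$ with $X_0=1$ that it determines, and to prove it by edge-crossing bookkeeping. For $x\ge1$ let $u_x$ (resp.\ $d_x$) denote the number of right (resp.\ left) jumps the walk makes from site $x$ strictly before time $T_0$. A right jump from $x-1$ is exactly an up-crossing of the cut between $x-1$ and $x$, and a left jump from $x$ is exactly a down-crossing of it; so once one knows, for each $x$, how often $x$ is visited before $T_0$ and in what order its visits split between left and right jumps according to $(J_n^x)$, one can read $Z_x$ off its defining recursion $Z_x=\Theta_x-Z_{x-1}$.

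First I would treat the case $T_0<\infty$, which yields the identity $Z_x=u_x$ asserted in the second part of the proposition and also the implication that $Z_x>0$ for all $x>0$ forces $T_0=\infty$. The path runs from $X_0=1$ to $X_{T_0}=0$ and is finite, so the net-crossing count at the cut between $x-1$ and $x$ gives $d_1=u_0+1=1$ (the walk does not reach $0$ before $T_0$, so $u_0=0$) and $d_x=u_{x-1}$ for $x\ge2$. Moreover, since the path ends at $0<x$, after the last visit to $x$ the walk must sit at $x-1$ rather than at $x+1$ (from $x+1$ it would have to re-cross $x$), so the final jump from $x$, when there is one, is a left jump, namely the $d_x$-th. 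I would then induct on $x$ with hypothesis $u_{x-1}=Z_{x-1}$: the base $Z_0=1=d_1$ holds, and the step combines it with $d_x=u_{x-1}$ to get $Z_{x-1}=d_x$; consequently the jumps the walk makes from $x$, one per visit and in order, are exactly $J_1^x,\dots,J_{\Theta_x}^x$, the prefix of the pattern terminating at its $Z_{x-1}$-th left jump, of which $Z_{x-1}$ are left and the remaining $Z_x$ are right, so $u_x=Z_x$. Since a finite path visits finitely many sites, $Z_x=u_x=0$ for all large $x$, which contradicts $Z_x>0$ for all $x>0$.

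For the converse I would first use non-degeneracy to show $X_n\to\infty$ whenever $T_0=\infty$: if some site were visited infinitely often, then the infinitely many sign changes of $(J_n^x)$ there would give infinitely many left jumps from it, hence infinitely many visits to the site immediately to its left, and iterating this descent to $0$ contradicts $T_0=\infty$; so every site is visited finitely often and the walk, confined to $\{1,2,\dots\}$, tends to $+\infty$. Then for every $x\ge1$ the final jump from $x$ is a right jump (otherwise the walk re-crosses $x$), so $u_x\ge1$, and the net-crossing count now gives $d_1=0$ and $d_x=u_{x-1}-1$ for $x\ge2$, the extra $-1$ arising because the path escapes to $+\infty$ rather than returning to $0$. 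I would then prove $u_x\le Z_x$ for all $x\ge0$ by induction: the base is $u_0=0\le1=Z_0$, and for the step one has $d_x\le Z_{x-1}-1<Z_{x-1}$, so the walk's finitely many jumps from $x$ never include the $Z_{x-1}$-th left jump of $(J_n^x)$ and hence form a proper prefix of $J_1^x,\dots,J_{\Theta_x}^x$; since that segment contains exactly $Z_x$ right jumps, a proper prefix contains at most $Z_x$, so $u_x\le Z_x$. Together with $u_x\ge1$ for $x\ge1$ this gives $Z_x>0$, completing the equivalence.

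The direction $T_0<\infty$ is the clean one, since the path genuinely terminates at $0$. I expect the main obstacle to be the $T_0=\infty$ case: making airtight the off-by-one chain $u_{x-1}\le Z_{x-1}\Rightarrow d_x\le Z_{x-1}-1\Rightarrow$ (the walk's jumps from $x$ form a proper prefix of the pattern segment that defines $\Theta_x$) $\Rightarrow u_x\le Z_x$, together with deducing $X_n\to\infty$ carefully from non-degeneracy. One should also verify the degenerate sub-cases separately — a site never visited before $T_0$, where $\Theta_x=Z_x=0$ by the empty-sum convention — so that the bases and steps of both inductions remain valid.
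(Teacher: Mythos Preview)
The paper does not actually prove this proposition; it cites it from \cite{Amir2013}. So there is no ``paper's own proof'' to compare against, and your proposal supplies a self-contained argument where the paper only gives a reference.

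Your edge-crossing bookkeeping is correct and is essentially the standard way to establish this kind of result. A few small points of care:

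\begin{itemize}
\item In the $T_0<\infty$ direction, your stated inductive hypothesis is $u_{x-1}=Z_{x-1}$ but your base case is $Z_0=1=d_1$; these do not match at $x=1$ since $u_0=0\ne 1=Z_0$. The clean statement is to induct on the claim $d_x=Z_{x-1}$ (equivalently, $u_x=Z_x$ for $x\ge 1$): the base $d_1=1=Z_0$ is what you wrote, and for $x\ge 2$ the step uses $d_x=u_{x-1}=Z_{x-1}$ together with ``last jump from $x$ is left'' to get $u_x+d_x=\Theta_x$ and hence $u_x=Z_x$. Your argument is right; just phrase the induction consistently.
\item In the $T_0=\infty$ direction, the key inequality $d_x<Z_{x-1}$ needs $Z_{x-1}\ge 1$ to be useful. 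This is fine because your induction simultaneously carries $u_{x-1}\ge 1$ (for $x-1\ge 1$) and $u_{x-1}\le Z_{x-1}$, so $Z_{x-1}\ge 1$ comes for free along the way; it is worth saying explicitly. Your conclusion that the walk's jumps from $x$ form a prefix of $J_1^x,\ldots,J_{\Theta_x}^x$ with fewer than $Z_{x-1}$ left jumps, hence at most $Z_x$ right jumps, is exactly right.
\item Your use of non-degeneracy to force $X_n\to\infty$ when $T_0=\infty$ is correct: an infinitely-visited site forces its left neighbor to be infinitely visited, and cascading down to $1$ gives infinitely many visits to $0$.
\end{itemize}

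With these minor clarifications the argument is complete and would serve as a valid replacement for the citation.
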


\subsection{Basic Lemmas}
\label{subsec:BasicLemmas}

For $n \geq 0$ we denote by $\AM_n^+$ the event that the random walk steps right at time $n$
and never returns to its time-$n$ location, and by $\AM_n^{-}$ the event that the random walks
steps left at time $n$ and never returns:
\begin{align}
\label{eq:DefAnpm}
\AM_n^+ = \{X_m > X_n, \forall m > n\} ~\mbox{ and }~
\AM_n^- = \{X_m < X_n, \forall m > n\}.
\end{align}
The following simple facts will be needed in several instances below. A proof
is provided in Appendix \ref{sec:BasicTransienceConditions}.

\begin{Lem}
\label{lem:SimpleTransConditions}
For any initial environment $\omega$:
\begin{itemize}
\item[(i)] $\P_{\omega}(\AM_0^+) > 0$ if and only if $\P_{\omega}(X_n \rightarrow \infty) > 0$, and \\
$\P_{\omega}(\AM_0^-) > 0$ if and only if $\P_{\omega}(X_n \rightarrow -\infty) > 0$.
\item[(ii)] $\P_{\omega}(X_n \rightarrow \infty) = \P_{\omega}(\liminf_{n \to \infty} X_n > -\infty)$, and \\
$\P_{\omega}(X_n \rightarrow -\infty) = \P_{\omega}(\limsup_{n \to \infty} X_n < \infty)$.
\item[(iii)] $\P_{\omega}(X_n \rightarrow \infty) = 1$ if $\P_{\omega}(X_n \rightarrow \infty) > 0$ and $\P_{\omega}(X_n \rightarrow -\infty) = 0$. \\
$\P_{\omega}(X_n \rightarrow -\infty) = 1$ if $\P_{\omega}(X_n \rightarrow -\infty) > 0$ and $\P_{\omega}(X_n \rightarrow \infty) = 0$.
\end{itemize}
\end{Lem}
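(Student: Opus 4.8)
The common core of all three parts is an ergodic observation about the single site chains. Fix $x \in \Z$ and suppose the walk visits $x$ infinitely often; then the extended single site chain $(\Yh_n^x)$ is run forever, so by the ergodic theorem for the finite, irreducible transition matrix $\Mh$ together with \eqref{eq:ExpectedValuePhiEqualAlpha}, the limiting fraction of visits to $x$ at which the walk jumps right is $\alpha$ and the limiting fraction at which it jumps left is $1-\alpha$; since $\alpha \in (0,1)$, the walk jumps both right and left from $x$ infinitely often. I would use this to prove the dichotomy that, $\P_\omega$ a.s., $\liminf_n X_n \in \{-\infty, +\infty\}$ and $\limsup_n X_n \in \{-\infty, +\infty\}$: if $\liminf_n X_n = a$ were finite then $X_n = a$ infinitely often, hence the walk jumps left from $a$ infinitely often, hence $X_n = a-1$ infinitely often, contradicting $\liminf_n X_n = a$; the statement for $\limsup$ is symmetric. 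Consequently, $\P_\omega$ a.s.\ exactly one of the following holds: $X_n \to +\infty$; $X_n \to -\infty$; or $\liminf_n X_n = -\infty$ and $\limsup_n X_n = +\infty$, in which last case the walk crosses $0$ infinitely often so that $N_0 = \infty$ --- and, conversely, $N_0 = \infty$ forces this last case. In particular $\{\liminf_n X_n > -\infty\} = \{X_n \to +\infty\}$ up to a $\P_\omega$-null set, and symmetrically for $\limsup$, which is exactly part (ii).

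For part (i), one direction is immediate from the dichotomy: on $\AM_0^+$ we have $\liminf_n X_n \geq 1 > -\infty$, hence $X_n \to +\infty$, so $\P_\omega(\AM_0^+) > 0 \Rightarrow \P_\omega(X_n \to \infty) > 0$. For the converse I would use the coupling of Section~\ref{subsubsec:SingleSiteMarkovChain}. Since site $1$ is untouched until the walk first reaches it, $\AM_0^+$ is the event that $J_1^0 = 1$ and that the walk, then started at $1$ and driven by the jump patterns $\{J_n^x\}_{x \geq 1}$, never returns to $0$; as the single site chains are independent and $\P_\omega(J_1^0 = 1) > 0$, Proposition~\ref{prop:SurvivalZxTransienceXn} shows that $\P_\omega(\AM_0^+) > 0$ if and only if $\P_\omega(Z_x > 0 \text{ for all } x \geq 1) > 0$. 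If $\P_\omega(X_n \to \infty) > 0$, then by the dichotomy $\P_\omega(\min_n X_n = -m) > 0$ for some $m \geq 0$; on $\{\min_n X_n = -m\}$ the walk reaches level $-m$ at time $T_{-m}$ and thereafter never hits $-m-1$. Since the sites $\leq -m$ are untouched before $T_{-m}$, the strong Markov property and the analogue of Proposition~\ref{prop:SurvivalZxTransienceXn} based at level $-m$ yield positive survival probability for the corresponding right jumps type chain, and identifying it with the chain in \eqref{eq:DefRightJumpsMC} gives $\P_\omega(Z_x > 0 \ \forall x \geq 1) > 0$. The one point requiring care is this identification: the sites strictly above $-m$ have already been visited before $T_{-m}$, so a short resplicing that uses irreducibility of the single site chains is needed to replace their jump patterns by fresh ones.

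For part (iii), the dichotomy gives $\P_\omega(X_n \to \infty) + \P_\omega(X_n \to -\infty) + \P_\omega(N_0 = \infty) = 1$, so under the hypotheses it suffices to show $\P_\omega(N_0 = \infty) = 0$. From $\P_\omega(X_n \to -\infty) = 0$ and the $\limsup$ form of part (ii), $\limsup_n X_n = +\infty$ $\P_\omega$ a.s., so the walk reaches every level $\ell \geq 1$; writing $T_\ell$ for the first such time, the sites $\geq \ell$ are untouched at time $T_\ell$. On $\{N_0 = \infty\}$ the walk can never escape upward from a level (an upward escape forces $\liminf_n X_n > -\infty$, hence $X_n \to \infty$), so it fails to escape at each $T_\ell$, and the successive ``record'' levels reached this way form a strictly increasing sequence. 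Conditioned on the past, the chance of escaping upward at a record level is a right jumps chain survival probability on fresh sites --- a deterministic positive quantity for $\ell = 1$, since $\P_\omega(\AM_0^+) > 0$ by part (i). A conditional Borel--Cantelli (or L\'evy $0$--$1$) argument along the record levels then yields $\P_\omega(N_0 = \infty) = 0$. The main obstacle is that these conditional escape probabilities need not be bounded below uniformly in the record level --- they depend on the tail of the environment, which is untouched (hence still equal to $\omega$ there) and may be arbitrarily unfavorable in the critical regime --- so the argument must be carried out while tracking the growing ``budget'' the walk accumulates at successive levels, rather than a fixed budget of one, in order to guarantee that the infinitely many escape opportunities are collectively decisive.
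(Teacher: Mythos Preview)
Your argument for part (ii) is correct and is essentially the paper's argument spelled out in more detail.

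For part (i), the converse direction has a genuine gap. You correctly reduce the question to showing $\P_\omega(Z_x>0,\ \forall x\geq 1)>0$, but the ``resplicing'' you invoke to pass from a right jumps chain based at level $-m$ (with finitely many already-visited sites above $-m$) to the chain \eqref{eq:DefRightJumpsMC} on fresh sites is not justified, and irreducibility of the single site chains does not by itself supply it. The paper avoids this entirely by a path-surgery trick: since $\P_\omega(X_n\to\infty)>0$, there is a finite path $\zeta=(0,\dots,2)$ with $\P_\omega(X_m>1,\ \forall m\geq |\zeta|\mid \zeta)>0$. From $\zeta$ one forms a \emph{reduced path} $\zetat$ by deleting all steps before the first hit of $1$ and all leftward excursions from $1$; then $\zetat$ is a legitimate positive-probability path that stays $\geq 1$ after its first step, and since the deleted steps all occur at sites $\leq 1$, one has $\omega^{(\zetat)}(y)=\omega^{(\zeta)}(y)$ for every $y\geq 2$. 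Hence the conditional probability of $\{X_m>1,\ \forall m\geq\cdot\}$ is the same after $\zetat$ as after $\zeta$, and $\P_\omega(\AM_0^+)>0$ follows directly. This sidesteps any identification of chains.

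For part (iii), the obstacle you yourself flag is fatal to the Borel--Cantelli approach as stated: the escape probability at a record level depends on the untouched tail of $\omega$ and need not be bounded below. The paper's route is quite different and rests on one observation you are missing: from $\P_\omega(X_n\to-\infty)=0$ and part (ii) one gets that every leftward excursion from any site returns with probability $1$. Consequently, conditioned on first reaching a site $x\geq 0$, the event $\{X_n\to\infty\}$ is measurable with respect to the jump sequences at sites $\geq x$, which are still fresh and independent of the path taken to reach $x$. This gives $\P_\omega(X_n\to\infty\mid T_0^{(i)}<\infty)=\P_\omega(X_n\to\infty)$ for every $i$, whence $\P_\omega(X_n\not\to\infty)=\prod_{j\geq i}\P_\omega(T_0^{(j+1)}<\infty\mid T_0^{(j)}<\infty)$ for every $i$; a tail of a convergent product with constant value forces either all factors equal to $1$ (contradicting $\P_\omega(X_n\to\infty)>0$) or the product equal to $0$.
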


Combining Proposition \ref{prop:SurvivalZxTransienceXn} and part (i) of Lemma 
\ref{lem:SimpleTransConditions} gives the following useful lemma.

\begin{Lem}
\label{lem:ProbSurvivalZxProbTransienceXn}
For any initial environment $\omega$,
\begin{align}
\label{eq:ProbA0Plus}
\P_{\omega}(\AM_0^+) = \P_{\omega}(X_1 = 1) \cdot \P_{\omega}(Z_x > 0, \forall x > 0).
\end{align}
Consequently, $\P_{\omega}(X_n \rightarrow \infty) > 0 \mbox{ if and only if } \P_{\omega}(Z_x > 0, \forall x > 0) > 0$.
\end{Lem}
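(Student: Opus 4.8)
The plan is to establish Lemma~\ref{lem:ProbSurvivalZxProbTransienceXn} by directly combining Proposition~\ref{prop:SurvivalZxTransienceXn} with part~(i) of Lemma~\ref{lem:SimpleTransConditions}, exploiting the two-step construction of the random walk $(X_n)$ from the independent extended single site chains $(\Yh_n^x)$. The key observation is that the event $\AM_0^+$ decomposes naturally: the walk must first take a rightward step at time $0$ (this is the event $\{X_1 = 1\}$, which under the coupling is exactly $\{J_1^0 = 1\}$ and has probability $\P_\omega(X_1 = 1)$), and then, conditioned on this first step, the walk started from $X_1 = 1$ must never return to $0$, i.e.\ $T_0 = \infty$ for the walk restarted at position $1$ in the updated environment.

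First I would spell out the conditioning: write $\P_\omega(\AM_0^+) = \P_\omega(X_1 = 1) \cdot \P_\omega(X_m \geq 1 \text{ for all } m \geq 1 \mid X_1 = 1)$. The point is that after the first jump right, the walk is at site $1$, the configuration at site $0$ has been updated (but site $0$ is never revisited on the event in question, so its new configuration is irrelevant), and the configurations at all sites $x \geq 1$ are unchanged and still governed by their independent extended single site chains $(\Yh_n^x)_{n \in \N}$ with initial distributions from \eqref{eq:DistFrom_pSite}--\eqref{eq:DistFrom_qSite} — note in particular that site $1$ has not yet been visited, so nothing there has been consumed. Thus the conditional law of $(X_{m+1})_{m \geq 0}$ given $X_1 = 1$ is exactly that of the random walk started at $X_0 = 1$ in environment $\omega$ (restricted to sites $\geq 1$), and $T_0 = \infty$ for this restarted walk. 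Since the jump pattern is non-degenerate $\P_\omega$ a.s., Proposition~\ref{prop:SurvivalZxTransienceXn} applies and gives $\P_\omega(T_0 = \infty \mid X_1 = 1) = \P_\omega(Z_x > 0 \text{ for all } x > 0)$, where the right jumps chain $(Z_x)$ is the deterministic function of the jump pattern $\{J_n^x\}_{n \in \N, x \geq 1}$ defined in \eqref{eq:DefRightJumpsMC}. This yields \eqref{eq:ProbA0Plus}.

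For the ``consequently'' clause, I would chain the equivalences. By part~(i) of Lemma~\ref{lem:SimpleTransConditions}, $\P_\omega(X_n \to \infty) > 0$ if and only if $\P_\omega(\AM_0^+) > 0$. By \eqref{eq:ProbA0Plus} and the fact that $\P_\omega(X_1 = 1) > 0$ always (both $p, q \in (0,1)$, so whatever the configuration at site $0$, the first jump is right with strictly positive probability), $\P_\omega(\AM_0^+) > 0$ if and only if $\P_\omega(Z_x > 0, \forall x > 0) > 0$. Combining gives the stated equivalence.

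The main subtlety — and the only thing requiring care rather than being purely routine — is justifying the conditional independence/restart claim: that conditioning on $\{X_1 = 1\}$ does not disturb the extended single site chains at sites $x \geq 1$, and that the single-site chain at site $0$ plays no role on the event $\AM_0^+$. This is immediate from the two-step construction (the jump patterns are sampled independently of the walk's trajectory, and the walk is then a deterministic function of them), but it should be stated cleanly. Everything else is bookkeeping. I expect this lemma's proof to be only a few lines in the actual paper, since both ingredients are already in hand.
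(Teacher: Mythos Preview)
Your proposal is correct and follows essentially the same approach as the paper's proof: condition on the first step, use that the environment at sites $x \geq 1$ is unchanged to restart the walk at position $1$, invoke Proposition~\ref{prop:SurvivalZxTransienceXn} to equate $\{T_0 = \infty\}$ with survival of $(Z_x)$, and then appeal to Lemma~\ref{lem:SimpleTransConditions}(i) for the ``consequently'' clause. The paper makes the restart slightly more explicit by introducing the updated environment $\omega'$ and noting $\omega'(x) = \omega(x)$ for $x > 0$, whereas you argue directly that site~$0$'s updated configuration is irrelevant on $\AM_0^+$; this is the same idea.
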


\begin{proof}
Fix any initial environment $\omega$, and let $\omega'$ denote the environment at time $1$
induced by jumping right from $X_0 = 0$ starting in $\omega$:
\begin{align*}
\{\omega_0 = \omega, X_0 = 0, X_1 = 1\} \implies \omega_1 = \omega'.
\end{align*}
Since $\omega(x) = \omega'(x)$, for all $x > 0$, the distribution of the random variables $(J_n^x)_{n,x > 0}$,
is the same in the two environments $\omega$ and $\omega'$. Thus,
\begin{align*}
\P_{\omega}(Z_x > 0, \forall x > 0) = \P_{\omega'}(Z_x > 0, \forall x > 0).
\end{align*}
So,
\begin{align*}
\P_{\omega}(\AM_0^+)
& \equiv \P_{\omega,0}(X_n > 0, \forall n > 0) \\
& = \P_{\omega,0}(X_1 = 1) \cdot \P_{\omega,0}(X_n > 0, \forall n > 1|X_1 = 1) \\
& = \P_{\omega,0}(X_1 = 1) \cdot \P_{\omega',1}(X_n > 0, \forall n > 0) \\
& \stackrel{(*)}{=} \P_{\omega,0}(X_1 = 1) \cdot \P_{\omega'}(Z_x > 0, \forall x > 0) \\
& = \P_{\omega,0}(X_1 = 1) \cdot \P_{\omega}(Z_x > 0, \forall x > 0).
\end{align*}
This proves \eqref{eq:ProbA0Plus}, and the ``consequently'' part of the proposition follows immediately from part (i) of
Lemma \ref{lem:SimpleTransConditions}. Step $(*)$ follows from Proposition \ref{prop:SurvivalZxTransienceXn}\footnotemark{}.
\end{proof}

\footnotetext{ In the proof we have used the explicit notation $\P_{\omega,0}$, rather than simply $\P_{\omega}$,
for the random walk variables $X_n$, $n \geq 0$, to emphasize that the initial position $X_0 = 0$ plays
a role in their distribution. By contrast, $\P_{\omega}$, $\P_{\omega'}$ are used for the distribution of the
right jumps Markov chain $(Z_x)_{x \geq 0}$, where the initial position of the random walk plays no role.}

For the proofs of Theorems \ref{thm:BallisticityWhenNonCritical} and \ref{thm:R1Speed} we will
need the following lemma relating hitting times to speed. The same result is shown in \cite[Lemma 2.1.17]{Zeitouni2004},
for the case $C < \infty$ without the a priori assumption that $X_n \rightarrow \infty$. It is easy to see that with this
assumption the claim also holds in the case $C = \infty$.
\begin{Lem}
\label{lem:HittingTimesVersusSpeed}
If $\lim_{n \to \infty} X_n = \infty$ and $\lim_{x \to \infty} T_x/x = C  \in (0,\infty]$, then
\begin{align*}
\lim_{n \to \infty} X_n/n = 1/C.
\end{align*}
\end{Lem}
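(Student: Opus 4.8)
The plan is to sandwich the position $X_n$ using the running maximum $M_n = \max_{0 \le m \le n} X_m$. Since $\lim_n X_n = \infty$ we have $M_n \to \infty$, the maximum defining $M_n$ is attained, and each first hitting time $T_k$ is finite. The starting observation is the elementary identity $T_{M_n} \le n < T_{M_n+1}$: the walk reaches level $M_n$ at or before time $n$ (namely, at a time where the maximum is attained), while if it had reached level $M_n+1$ by time $n$ this would contradict the definition of $M_n$.

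From $T_{M_n} \le n$ we get $T_{M_n}/M_n \le n/M_n$; since $M_n \to \infty$ and $\lim_{x\to\infty} T_x/x = C$, the left-hand side tends to $C$, so $\limsup_n M_n/n \le 1/C$ (read as $M_n/n \to 0$ when $C = \infty$). This immediately settles the case $C = \infty$: since $0 \le X_n \le M_n$ for all large $n$, we get $X_n/n \to 0 = 1/C$. (For $C < \infty$ the conclusion is exactly \cite[Lemma 2.1.17]{Zeitouni2004}, but it also falls out of continuing the same argument.) When $C < \infty$, the inequality $n < T_{M_n+1}$ gives $n/(M_n+1) < T_{M_n+1}/(M_n+1) \to C$, hence $\liminf_n M_n/n \ge 1/C$, so $\lim_n M_n/n = 1/C$, and in particular $\lim_n T_{M_n+1}/n = \lim_n \big(T_{M_n+1}/(M_n+1)\big)\big((M_n+1)/n\big) = C\cdot(1/C) = 1$.

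For $C < \infty$ it then remains to show $X_n$ is not far below $M_n$. Set $D_n = M_n - X_n \ge 0$. The walk is at $X_n$ at time $n$ and first reaches $M_n+1$ at time $T_{M_n+1}$, and it moves by $\pm 1$ per step, so the number of steps $T_{M_n+1} - n$ is at least the net displacement $(M_n+1) - X_n = D_n + 1$. Thus $D_n \le T_{M_n+1} - n - 1$, so $D_n/n \le T_{M_n+1}/n - 1 - 1/n \to 0$, i.e. $D_n/n \to 0$. Hence $X_n/n = M_n/n - D_n/n \to 1/C$, as claimed.

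There is no substantive obstacle; the only points requiring a little care are keeping track of which inequalities are strict and observing that the composed limits ($T_{M_n+1}/(M_n+1)$, etc.) are legitimate precisely because $M_n \to \infty$. The conceptual content is just that $X_n$ is bounded above by $M_n$ and below by $M_n$ minus the current ``backtracking distance'' $D_n$, and $D_n = o(n)$ is forced by the asymptotically linear growth of the hitting times.
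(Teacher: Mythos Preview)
Your proof is correct. The paper does not actually prove this lemma; it cites \cite[Lemma 2.1.17]{Zeitouni2004} for the case $C<\infty$ and remarks that the case $C=\infty$ follows easily once one assumes $X_n\to\infty$. Your argument supplies exactly the kind of self-contained, non-probabilistic proof the paper alludes to: sandwiching $n$ between $T_{M_n}$ and $T_{M_n+1}$ to get $M_n/n\to 1/C$, and then using the nearest-neighbor property to bound the backtracking $D_n=M_n-X_n$ by $T_{M_n+1}-n-1=o(n)$. This is the standard route (and essentially what the cited reference does), so there is no methodological divergence to discuss---you have simply written out what the paper leaves to the citation.
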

We note that, although stated in  \cite[Lemma 2.1.17]{Zeitouni2004} in the context of random walks in random environment,
the proof is entirely non-probabilistic and holds for \emph{any} nearest neighbor walk trajectory $(X_0,X_1,...)$ such that
$X_n \rightarrow \infty$ and $\lim_{x \to \infty} T_x/x = C$.

\subsection{Comparison of Environments}
\label{subsec:ComparisonOfEnvironments}

Let $\prec$ be the ordering on the set of single site configurations $\Lambda$ defined by
\begin{align*}
(q,0) \prec ... \prec (q,R-1) \prec (p,L-1) \prec ... \prec (p,0).
\end{align*}
We write $\lambda \preceq \lambdat$ if $\lambda \prec \lambdat$ or $\lambda = \lambdat$,
and $\omega \preceq \omegat$ if $\omega(x) \preceq \omegat(x)$, for all $x \in \Z$.
In this case, we also say that the environment $\omegat$ \emph{dominates} the environment $\omega$.
The following lemma relating the possibility of right transience in different environments will be important
for the analysis of transience and recurrence in the critical case $\alpha = 1/2$.

\begin{Lem}
\label{lem:ComparisonOfEnvironments}
If $q < p$ and $\omega \preceq \omegat$, then $\P_{\omega}(\AM_0^+) \leq \P_{\omegat}(\AM_0^+)$.
In particular, by Lemma \ref{lem:SimpleTransConditions}, if $q < p$, $\omega \preceq \omegat$,
and $\P_{\omegat}(X_n \rightarrow \infty) = 0$, then $\P_{\omega}(X_n \rightarrow \infty) = 0$.
\end{Lem}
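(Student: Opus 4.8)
The plan is to build a coupling of the right jumps chains $(Z_x)$ and $(\Zt_x)$ under $\P_\omega$ and $\P_{\omegat}$ respectively, on a common probability space, in such a way that $Z_x \le \Zt_x$ for all $x \ge 0$ almost surely; then $\P_\omega(Z_x > 0,\ \forall x) \le \P_{\omegat}(Z_x > 0,\ \forall x)$, and the claim follows from Lemma \ref{lem:ProbSurvivalZxProbTransienceXn} together with the observation that $\P_\omega(X_1 = 1) = \P_{\omegat}(X_1 = 1)$ whenever $\omega(0)$ and $\omegat(0)$ are both in $\Lambda_p$ or both in $\Lambda_q$ — and more carefully, that the factor $\P_\omega(X_1=1)$ in \eqref{eq:ProbA0Plus} only matters for whether $\P_\omega(\AM_0^+)$ is positive, which is all we need for the ``in particular'' statement; for the inequality $\P_\omega(\AM_0^+) \le \P_{\omegat}(\AM_0^+)$ itself one must also check $\P_\omega(X_1=1) \le \P_{\omegat}(X_1=1)$, which holds since $\omega(0) \preceq \omegat(0)$ forces $\P_\omega(X_1=1) \in \{q,p\}$ with the $\omegat$ value at least as large (here $q < p$ is used).

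The key monotonicity input is at the level of a single site. First I would establish a \emph{stochastic monotonicity for the jump sequences}: if $\lambda \preceq \lambdat$ in the ordering on $\Lambda$, then the jump pattern $(J^x_n)_{n\in\N}$ started from configuration $\lambdat$ stochastically dominates the one started from $\lambda$, in the sense that the partial sums $\sum_{m=1}^n \indicator\{J^x_m = 1\}$ of right jumps are coupled to be pointwise $\ge$. This is most cleanly done by a direct recursive coupling of the extended single site chains $(\Yh^x_n)$: at each step, use a single uniform random variable to drive both chains, and check by inspection of the transition rules that the coupling preserves the property ``the dominating chain has made at least as many right jumps so far, and its current configuration is $\succeq$ that of the dominated chain'' — with the important special structure that once the two chains agree in configuration they can be coupled to coincide forever after. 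The crucial case analysis is when the two configurations differ: one verifies, using $q < p$, that from a $\succeq$ pair the dominating chain can be made to jump right whenever the dominated one does. I expect the configurations to funnel toward agreement because both chains are recurrent on $\Lambda$, but the coupling argument does not actually need that.

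Granting the single-site domination, I would then lift it to the right jumps chains by induction on $x$. Since $\omega \preceq \omegat$, for each fixed $x$ we have $\omega(x) \preceq \omegat(x)$, so by the single-site step couple $(J^x_n)$ under the two environments with the dominated sequence having no more right jumps (equivalently, no fewer left jumps up to any time) — actually I want the reverse bookkeeping here: $\Theta_x$ is defined as the first time there are $Z_{x-1}$ \emph{left} jumps, so I should couple so that the $\omega$-sequence has at least as many left jumps by any given time, i.e. reaches its quota sooner. Suppose inductively $Z_{x-1} \le \Zt_{x-1}$. Then, on the coupled sequences at site $x$: the $\omega$-chain needs only $Z_{x-1}$ left jumps while the $\omegat$-chain needs $\Zt_{x-1} \ge Z_{x-1}$, and at the (earlier or equal) time $\Theta_x$ when the $\omega$-chain has its $Z_{x-1}$-th left jump, the $\omegat$-chain has had at most $Z_{x-1}$ left jumps and hence — because total steps $=$ left $+$ right and the $\omegat$-chain has had at least as many right jumps — has had at least as many total steps... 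I need to be a little careful to extract $Z_x \le \Zt_x$ cleanly, but the mechanism is: fewer left jumps needed $+$ more right jumps per unit time $\Rightarrow$ more accumulated right jumps before the quota, giving $Z_x = \Theta_x - Z_{x-1} \le \Thetat_x - \Zt_{x-1} = \Zt_x$.

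The main obstacle I anticipate is the single-site coupling: one must verify by a somewhat tedious but finite inspection of the transition matrix $\Mh$ (six families of transitions) that the partial order on $\Lambdah$ given by $((\lambda,j)) \preceq ((\lambdat,\jt))$ iff $\lambda \preceq \lambdat$ and (in the tie-broken sense needed) the right-jump counts stay ordered, is preserved by a monotone (grand) coupling — and this is exactly where the hypothesis $q < p$ enters essentially, since with $q > p$ a $p$-mode site is \emph{less} likely to jump right than a nearby $q$-mode site and the ordering $\prec$ would be the wrong one. The rest is bookkeeping with the deterministic map from jump patterns to $(Z_x)$.
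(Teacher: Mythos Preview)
Your proposal is correct and follows essentially the same approach as the paper: couple the extended single-site chains at each site via common uniforms, verify by case-check that $q<p$ makes the map $(\lambda,U)\mapsto(J,\lambda')$ monotone in $\lambda$ for the order $\preceq$ (so $J_n^x\le\Jt_n^x$ and $Y_{n+1}^x\preceq\Yt_{n+1}^x$ by induction), then lift to $Z_x\le\Zt_x$ by induction on $x$ and invoke Lemma~\ref{lem:ProbSurvivalZxProbTransienceXn}. Your side remarks about the chains eventually coalescing and about recurrence on $\Lambda$ are unnecessary but harmless, and your handling of the $\P_\omega(X_1=1)$ factor is exactly right---the paper extracts $\P_\omega(J_1^0=1)\le\P_{\omegat}(J_1^0=1)$ directly from the same single-site coupling at $x=0$.
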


For the proof it will be convenient to introduce the following definitions.
\begin{itemize}
\item The threshold function $f: \Lambda \times [0,1] \rightarrow \{1,-1\}$ is defined by
\begin{align*}
f(\lambda, u) & = \indicator \{u \leq p \} - \indicator \{u > p \} ~,~ \mbox{ if }\lambda \in \Lambda_p\\
f(\lambda, u) & = \indicator \{u \leq q \} - \indicator \{u > q \} ~,~ \mbox{ if } \lambda \in \Lambda_q.
\end{align*}
\item The transition function $g: \Lambda \times \{1,-1\} \rightarrow \Lambda$ is defined by
\begin{align*}
g(\lambda,j) = {\lambda'} \iff \{ Y_n^x = \lambda, J_n^x = j \} \mbox{ implies } Y_{n+1}^x = \lambda'.
\end{align*}
That is, $g(\lambda,j)$ is the (deterministic) next configuration at site $x$ if the walk jumps
in direction $j$ from site $x$ when $x$ is in configuration $\lambda$.
\end{itemize}

\begin{proof}[Proof of Lemma \ref{lem:ComparisonOfEnvironments}]
For $x \in \Z$, let $(Y_n^x, J_n^x)_{n \in \N}$ and $(\Yt_n^x, \Jt_n^x)_{n \in \N}$ denote, respectively, the
state sequences of the extended single site Markov chains at $x$ for the environments $\omega$ and $\omegat$.
Also, let $(U_n^x)_{x \in \Z, n \in \N}$ be i.i.d. uniform([0,1]) random variables. For each $x$, we will
use the i.i.d. sequence $(U_n^x)_{n \in \N}$ to couple the state sequences $(Y_n^x, J_n^x)_{n \in \N}$ and
$(\Yt_n^x, \Jt_n^x)_{n \in \N}$ in such a way that $J_n^x \leq \Jt_n^x$, for all $n$. By independence, this coupling
at each individual site $x$ passes to a coupling of the entire joint processes $(Y_n^x, J_n^x)_{x \in \Z, n \in \N}$
and $(\Yt_n^x, \Jt_n^x)_{x \in \Z, n \in \N}$, with the correct law. This final larger coupling will be used to show that
$\P_{\omega}(\AM_0^+) \leq \P_{\omegat}(\AM_0^+)$. \\

\noindent
\emph{Step 1}: The Coupling \\
For a fixed site $x$, we construct the sequences $(Y_n^x, J_n^x)_{n \in \N}$ and $(\Yt_n^x, \Jt_n^x)_{n \in \N}$
inductively from the i.i.d. random variables $(U_n^x)_{n \in \N}$ as follows.
\begin{itemize}
\item $Y_1^x = \omega(x)$ and $\Yt_1^x = \omegat(x)$.
\item For $n \geq 1$,
\begin{align*}
J_n^x = f(Y_n^x,U_n^x),Y_{n+1}^x = g(Y_n^x, J_n^x) ~\mbox{ and }~
\Jt_n^x = f(\Yt_n^x,U_n^x), \Yt_{n+1}^x = g(\Yt_n^x, \Jt_n^x).
\end{align*}
\end{itemize}
Clearly, the sequences $(Y_n^x,J_n^x)_{n \in \N}$ and $(\Yt_n^x,\Jt_n^x)_{n \in \N}$ each have
have the appropriate marginal laws under this coupling. Moreover, by considering the various possible
cases for $Y_n^x, \Yt_n^x \in \Lambda$ and possible ranges for $U_n^x \in [0,1]$ one finds that,
since $q < p$, whatever the value of $U_n^x$ is:
\begin{align*}
Y_n^x \preceq \Yt_n^x \implies J_n^x \leq \Jt_n^x \mbox{ and } Y_{n+1}^x \preceq \Yt_{n+1}^x.
\end{align*}
Since $Y_1^x = \omega(x) \preceq \omegat(x) = \Yt_1^x$ it follows, by induction, that
\begin{align}
\label{eq:DominationJumpSequences}
J_n^x \leq \Jt_n^x, \mbox{ for all } n.
\end{align}

\noindent
\emph{Step 2}: Relation to the probability of $\AM_0^+$ \\
Let $(Z_x)_{x \geq 0}$ and $(\Zt_x)_{x \geq 0}$ denote, respectively, the right jumps Markov chains
constructed from the jump patterns $(J_n^x)_{x \in \Z, n \in \N}$ and $(\Jt_n^x)_{x \in \Z, n \in \N}$
according to (\ref{eq:DefRightJumpsMC}). Also, for $x, k \geq 0$ define $\Theta_{x,k}$ and $\Thetat_{x,k}$ by
\begin{align*}
\Theta_{x,k} = \inf \Big\{n : \sum_{m=1}^n \indicator \{J_m^x = -1\} = k \Big\} ~,~
\Thetat_{x,k} = \inf \Big\{n : \sum_{m=1}^n \indicator \{ \Jt_m^x = -1\} = k \Big\}.
\end{align*}
If $Z_{x-1} \leq \Zt_{x-1}$, then applying the definition (\ref{eq:DefRightJumpsMC}) gives
\begin{align*}
Z_x & = \sum_{m=1}^{\Theta_{x,Z_{x-1}}} \indicator \{J_m^x = 1\}
\stackrel{(a)}{\leq} \sum_{m=1}^{\Thetat_{x,Z_{x-1}}} \indicator \{J_m^x = 1\} \\
& \leq \sum_{m=1}^{\Thetat_{x,\Zt_{x-1}}} \indicator \{J_m^x = 1\}
\stackrel{(b)}{\leq} \sum_{m=1}^{\Thetat_{x,\Zt_{x-1}}} \indicator \{\Jt_m^x = 1\}
= \Zt_x.
\end{align*}
Here, (a) follows from (\ref{eq:DominationJumpSequences}), which implies $\Theta_{x,k} \leq \Thetat_{x,k}$
for any $k$, and (b) follows directly from (\ref{eq:DominationJumpSequences}). Since $Z_0 = \Zt_0 = 1$,
it follows, by induction, that
\begin{align}
\label{eq:DominationRightJumpsChains}
Z_x \leq \Zt_x , \mbox{ for all } x \in \Z.
\end{align}
Now, since (\ref{eq:DominationJumpSequences}) and (\ref{eq:DominationRightJumpsChains}) both hold with probability 1,
under our coupling, it follows from Lemma \ref{lem:ProbSurvivalZxProbTransienceXn} that
\begin{align*}
\P_{\omega}(\AM_0^+)
& = \P_{\omega}(X_1 = 1) \cdot \P_{\omega}(Z_x > 0, \forall x > 0)  \\
& = \P_{\omega}(J_1^0 = 1) \cdot \P_{\omega}(Z_x > 0, \forall x > 0) \\
& \leq \P_{\omegat}(J_1^0 = 1) \cdot \P_{\omegat}(Z_x > 0, \forall x > 0) \\
& = \P_{\omegat}(X_1 = 1) \cdot \P_{\omegat}(Z_x > 0, \forall x > 0) \\
& = \P_{\omegat}(\AM_0^+).
\end{align*}
Here, we have dropped the tildes on all random variables corresponding to the initial
environment $\omegat$, since the probability measure $\P_{\omegat}$ is now explicit.
\end{proof}

\section{The Noncritical Case}
\label{sec:NoncriticalCase}

Here we analyze the behavior of the random walk $(X_n)$ for $\alpha \not= 1/2$, proving Theorems
\ref{thm:RightLeftTransienceCutoff}--\ref{thm:R1Speed}. We begin in section \ref{subsec:SurvivalRightJumpsMarkovChain}
with a key lemma for the survival probability of the right jumps Markov chain $(Z_x)$, from which we derive a number of useful
corollaries. Using these results, Theorem \ref{thm:RightLeftTransienceCutoff} on the cutoff for right/left transience and Theorem
\ref{thm:BallisticityWhenNonCritical} on ballisticity of the random walk are then proved in sections \ref{subsec:CutoffForRightLeftTransience}
and \ref{subsec:Ballisticity}. Theorems \ref{thm:L1Speed} and \ref{thm:R1Speed} on the exact speed of the random walk in certain
special cases are proved afterward in sections \ref{subsec:SpeedWithL1} and \ref{subsec:SpeedWithR1}.

Throughout we use the following notation:
\begin{itemize}
\item $T_x^{(i)}$, $x \in \Z$ and $i \in \N$, is the $i$-th hitting time of site $x$.
\begin{align}
\label{eq:Def_ith_HittingTimes}
T_x^{(1)} = T_x ~~\mbox{ and }~~ T_x^{(i+1)} = \inf\{n > T_x^{(i)}: X_n = x\},
\end{align}
with the convention $T_x^{(j)} = \infty$, for all $j > i$, if $T_x^{(i)} = \infty$.
\item $N_x$ is the total number of visits to site $x$, as in (\ref{eq:DefNx}), and
$N_x^y$ is the number of visits to site $x$ up to time $T_y$.
\begin{align*}
N_x & = |\{n \geq 0: X_n = x\}| ~,~ x \in \Z. \\
N_x^y & = |\{0 \leq n \leq T_y: X_n = x\}| ~,~ x,y \in \Z.
\end{align*}
\item $R_x$ is the total number of right jumps from site $x$, and $L_x$ is the
total number of left jumps from site $x$.
\begin{align*}
R_x & = |\{n \geq 0: X_n = x, X_{n+1} = x + 1\}| ~,~ x \in \Z. \\
L_x & = |\{n \geq 0: X_n = x, X_{n+1} = x - 1\}| ~,~ x \in \Z.
\end{align*}
\item $B_x$ is the farthest distance the random walk ever steps backward from site $x$ after hitting $x$ for the first time.
\begin{align*}
B_x = \sup \{k \geq 0: \exists n \geq T_x \mbox{ with } X_n = x - k \} ~,~ x \in \Z.
\end{align*}
In the case $T_x = \infty$, $B_x \equiv 0$.
\item $\AM_n^+$, given by (\ref{eq:DefAnpm}), is the event that the random walk steps to the right at time $n$
and never returns to its time-$n$ location.
\item $\BM_{\epsilon}$, $0 < \epsilon < 1$, is the event that $B_x \leq \epsilon x$, for all sufficiently large $x$.
\begin{align}
\label{eq:DefBepsilon}
\BM_{\epsilon} = \left\{\exists N \in \N \mbox{ s.t. } B_x \leq \epsilon x, \forall x \geq N  \right\}.
\end{align}
\end{itemize}

\subsection{Survival of Right Jumps Markov Chain $(Z_x)$}
\label{subsec:SurvivalRightJumpsMarkovChain}

\begin{Lem}
\label{Lem:ProbZxGreater0}
If $\alpha = \alpha(p,q,R,L) > 1/2$, then there exists some $\beta = \beta(p,q,R,L) > 0$ such that,
for any initial environment $\omega$,
\begin{align*}
\P_{\omega}(Z_x > 0, \forall x > 0) \geq \beta.
\end{align*}
\end{Lem}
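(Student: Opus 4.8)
The plan is to analyze the right jumps Markov chain $(Z_x)_{x \geq 0}$ directly and show that it has a positive probability of surviving (staying strictly positive) forever, uniformly over the initial environment. The key structural fact is that $(Z_x)$ is driven, at each site $x$, by the sequence $(J_n^x)_{n \in \N}$ of jumps at that site, which by the construction in Section~\ref{subsubsec:SingleSiteMarkovChain} is a deterministic function of an ergodic finite-state Markov chain whose limiting fraction of right jumps is $\alpha$, as recorded in \eqref{eq:ExpectedValuePhiEqualAlpha}. Since $\alpha > 1/2$, the waiting time $\Theta_x$ for $Z_{x-1}$ left jumps in the sequence $(J_n^x)_n$ satisfies, by the ergodic theorem, $\Theta_x \approx Z_{x-1}/(1-\alpha)$ for large $Z_{x-1}$, so that $Z_x = \Theta_x - Z_{x-1} \approx Z_{x-1} \cdot \frac{\alpha}{1-\alpha} = Z_{x-1}(1 + \delta)$ with $\delta = \frac{2\alpha - 1}{1-\alpha} > 0$. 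Thus $(Z_x)$ behaves like a supercritical branching-type process and should survive with positive probability.

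First I would make the drift estimate quantitative and \emph{uniform in the configuration}: conditioning on $Z_{x-1} = k$ and on the current configuration $\omega_{\cdot}(x)$ (which may be arbitrary), I would show that $\E[Z_x \mid Z_{x-1} = k, \text{config at }x] \geq k(1+\delta')$ for some $\delta' > 0$ whenever $k$ is at least some threshold $k_0$, together with a suitable variance or exponential concentration bound for $Z_x$ around its mean. The point is that the single-site chain $(Y_n^x)$ is irreducible on the finite set $\Lambda$ with a stationary fraction $\alpha$ of right jumps, so from \emph{any} starting configuration the number of right jumps accumulated before $k$ left jumps concentrates around $k\alpha/(1-\alpha)$ with fluctuations of order $\sqrt{k}$ (or with exponential tails via a standard large-deviations bound for additive functionals of finite Markov chains, e.g. through the spectral gap of $M$). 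Both the drift and the tail constants can be taken independent of $\omega$ because $\Lambda$ is finite.

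Second, with such a uniform drift-plus-concentration estimate in hand, I would run a standard supermartingale/Lyapunov argument: for instance, consider a process like $W_x = \rho^{Z_x}$ for a suitably chosen $\rho \in (0,1)$, or use a direct comparison with a supercritical Galton--Watson process that is stochastically dominated by $(Z_x)$ on the event that $Z_x$ stays large. One shows that, conditionally on $Z_{x-1} = k \geq k_0$, the probability that $(Z_x)$ ever drops to $0$ is at most some $\theta(k) < 1$ with $\theta(k) \to 0$ as $k \to \infty$ — this is exactly the extinction-probability bound for the dominating branching process. Since $Z_0 = 1$, I would need a short separate argument that with positive probability (bounded below uniformly in $\omega$, again using finiteness of $\Lambda$ and $\alpha > 1/2$) the chain climbs from $1$ up past the threshold $k_0$ without hitting $0$ in the meantime; combining this ``escape from small values'' probability with the ``survival from large values'' probability gives the desired uniform lower bound $\beta > 0$ on $\P_\omega(Z_x > 0,\ \forall x > 0)$.

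The main obstacle I anticipate is the uniformity in $\omega$ together with the interaction between the \emph{random} current value $Z_{x-1}$ and the \emph{random} configuration at site $x$ when the chain returns there: the configuration at $x$ at the moment $Z_{x-1}$ is determined depends on the past of the process, so the conditional law of $Z_x$ given $Z_{x-1}$ is not literally that of a fixed branching offspring distribution. The way around this is precisely the observation that the drift and concentration bounds above hold \emph{uniformly over all initial configurations} at site $x$, so one may take a worst case over configurations and still retain a genuine positive drift and exponential tails; this reduces the analysis to a time-homogeneous dominating branching process and lets the Lyapunov/extinction argument go through. The finiteness of $\Lambda$ and the strict inequality $\alpha > 1/2$ are what make all the relevant constants strictly positive and $\omega$-independent.
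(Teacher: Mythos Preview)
Your proposal is correct and takes essentially the same approach as the paper: both rest on a uniform large-deviations bound for the single-site chain over all starting configurations in the finite set $\Lambda$, yielding that $\P_\omega(Z_x \leq b Z_{x-1} \mid Z_{x-1} = n)$ decays exponentially in $n$ for some $b>1$, from which survival follows. The paper's execution is slightly more direct than your Lyapunov/branching-comparison scheme --- it simply bounds $\P_\omega(Z_x \geq n_1 b^x,\ \forall x>0)$ from below by an explicit convergent product --- and your anticipated ``obstacle'' about the configuration at $x$ depending on the past is in fact a non-issue here, since by construction the single-site chains at different sites are run independently from the fixed initial states $\omega(x)$, so $Z_{x-1}$ and the starting configuration at $x$ do not interact.
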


\begin{proof}
Fix $p,q,R,L$ such that $\alpha > 1/2$ and any initial environment $\omega$. Define $0 < \epsilon < 1/4$ by the relation
$\alpha = 1/2 + 2 \epsilon$, and for $\lambdah = (\lambda,j) \in \Lambdah$, let $\phi(\lambdah) = \indicator\{j = 1\}$.

By (\ref{eq:ExpectedValuePhiEqualAlpha}) we have $\alpha = \E_{\pih}(\phi)$, where $\pih$ is the stationary distribution for the
extended single site transition matrix $\Mh$. So, by standard large deviation bounds for finite-state Markov chains, there exist
some $0 < a < 1$ and $n_0 \in \N$ such that for any initial state $\lambdah \in \Lambdah$ the Markov chain $(\Yh_n)$ with transition
matrix $\Mh$ satisfies
\begin{align*}
\P_{\lambdah}\left(\frac{1}{n} \sum_{m=1}^{n} \phi(\Yh_m) \leq 1/2 + \epsilon \right)
= \P_{\lambdah}\left(\frac{1}{n} \sum_{m=1}^{n} \phi(\Yh_m) \leq \E_{\pih}(\phi) - \epsilon \right)
\leq a^n~, n \geq n_0.
\end{align*}
Using this estimate we obtain the following important inequality:
\begin{align}
\label{eq:Zx_given_ZxMinus1}
\P_{\omega} (Z_x& \leq n(1/2 + \epsilon)/(1/2 - \epsilon) ~|~ Z_{x-1} = n) \nonumber \\
& = \P_{\omega}(\Theta_x \leq n/(1/2 - \epsilon) ~|~ Z_{x-1} = n) \nonumber \\
& = \P_{\omega}\left( \exists~ n \leq m \leq n/(1/2-\epsilon) : \sum_{i = 1}^{m} (1 - \phi(\Yh_i^x)) = n \right) \nonumber \\
& = \P_{\omega}\left( \exists~ n \leq m \leq n/(1/2-\epsilon) : \frac{1}{m} \sum_{i = 1}^{m} \phi(\Yh_i^x) = \frac{m - n}{m} \right) \nonumber \\
& \leq \P_{\omega}\left( \exists~ n \leq m \leq n/(1/2-\epsilon) : \frac{1}{m} \sum_{i = 1}^{m} \phi(\Yh_i^x) \leq 1/2 + \epsilon \right) \nonumber \\
& \leq \P_{\omega}\left( \exists m \geq n : \frac{1}{m} \sum_{i = 1}^{m} \phi(\Yh_i^x) \leq 1/2 + \epsilon \right) \nonumber \\
& \leq \sum_{m=n}^{\infty} a^{m} = \frac{a^n}{1-a} ~,~ \mbox{for all $n \geq n_0$ and $x \in \N$.}
\end{align}

Now, define $b > 1$ by $b = \frac{1/2 + \epsilon}{1/2 - \epsilon}$, and take $n_1 \geq n_0$ sufficiently large
that $\frac{a^{n_1}}{1-a} < 1$. Thus, $\frac{a^{\lceil n_1 b^{x-1}\rceil}}{1-a} < 1$, $\forall x \in \N$.
Applying the inequality (\ref{eq:Zx_given_ZxMinus1}) gives, \\
\begin{align*}
\P_{\omega}( & Z_x > 0, \forall x > 0) \\
& \geq \P_{\omega}( Z_x \geq n_1 b^x, \forall x > 0) \\
& = \P_{\omega}(Z_1 \geq n_1 b) \cdot \prod_{x=2}^{\infty}
\P_{\omega}(Z_x \geq n_1 b^x | Z_1 \geq n_1 b, ... , Z_{x-1} \geq n_1 b^{x-1}) \\
& \geq \P_{\omega}(Z_1 \geq n_1 b) \cdot \prod_{x=2}^{\infty}
\P_{\omega}(Z_x \geq n_1 b^x | Z_{x-1} = \lceil n_1 b^{x-1} \rceil) \\
& \geq (\min\{p,q\})^{\ceil{n_1 b}} \cdot \prod_{x=2}^{\infty}
\left(1 - \frac{a^{\lceil n_1 b^{x-1}\rceil}}{1-a} \right) \equiv \beta.
\end{align*}
Note that $\sum_{x=2}^{\infty} \frac{a^{\lceil n_1 b^{x-1}\rceil}}{1-a} < \infty$, so
$\prod_{x=2}^{\infty} \left(1 - \frac{a^{\lceil n_1 b^{x-1}\rceil}}{1-a} \right) > 0$.
\end{proof}

\begin{Cor}
\label{cor:PrAnPlusGreaterEqualBeta}
If $\alpha = \alpha(p,q,R,L) > 1/2$ then there exists some $\beta = \beta(p,q,R,L) > 0$, such that
for any initial environment $\omega$ and random walk path $(x_0, x_1,...,x_n)$,
\begin{align}
\label{eq:PrAnPlusGreaterEqualBeta}
\P_{\omega,x_0}(\AM_n^+|X_0 = x_0,...,X_n = x_n) \geq \beta.
\end{align}
\end{Cor}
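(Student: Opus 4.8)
The plan is to reduce the conditional probability of $\AM_n^+$ given the past to an \emph{unconditional} probability of $\AM_0^+$ in a suitably shifted environment, and then apply Lemma~\ref{lem:ProbSurvivalZxProbTransienceXn} together with the environment-uniform lower bound just established in Lemma~\ref{Lem:ProbZxGreater0}.

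First I would observe that $(X_n,\omega_n)$ is a Markov chain, and that the time-$n$ environment $\omega_n$ is in fact a \emph{deterministic} function of $\omega$ and the path $(X_0,\dots,X_n)$: indeed $\omega_{n+1}$ agrees with $\omega_n$ off the site $X_n$, and $\omega_{n+1}(X_n)$ is determined by $\omega_n(X_n)$ and the sign of $X_{n+1}-X_n$ via the update rules. Hence conditioning on $\{X_0=x_0,\dots,X_n=x_n\}$ pins $\omega_n$ down to a single environment $\eta=\eta(\omega;x_0,\dots,x_n)$, and by the Markov property the conditional law of $(X_{n+m})_{m\ge0}$ is $\P_{\eta,x_n}$. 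Since $\AM_n^+=\{X_m>X_n,\ \forall m>n\}$ depends only on $(X_n,X_{n+1},\dots)$ and translates to $\AM_0^+$ for the shifted walk $m\mapsto X_{n+m}$, translation invariance of the dynamics gives
$$\P_{\omega,x_0}(\AM_n^+ \mid X_0=x_0,\dots,X_n=x_n)=\P_{\eta,x_n}(\AM_0^+)=\P_{\etab}(\AM_0^+),$$
where $\etab(\cdot)=\eta(\cdot+x_n)$ is $\eta$ shifted so the walk starts at $0$. Every nearest-neighbor path from $x_0$ has positive probability because $p,q\in(0,1)$, so the conditioning event is never null.

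Then I would invoke Lemma~\ref{lem:ProbSurvivalZxProbTransienceXn}, which yields
$$\P_{\etab}(\AM_0^+)=\P_{\etab}(X_1=1)\cdot\P_{\etab}(Z_x>0,\ \forall x>0)\ \ge\ \min\{p,q\}\cdot\P_{\etab}(Z_x>0,\ \forall x>0),$$
the inequality because the first jump from the origin is to the right with probability $p$ or $q$. Finally, since $\alpha>1/2$, Lemma~\ref{Lem:ProbZxGreater0} provides a constant $\beta_0=\beta_0(p,q,R,L)>0$ with $\P_{\etab}(Z_x>0,\ \forall x>0)\ge\beta_0$ for \emph{every} initial environment $\etab$. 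Combining the last three displays, the corollary holds with $\beta=\min\{p,q\}\cdot\beta_0$, a constant depending only on $p,q,R,L$.

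The only substantive step is the Markov-property reduction in the first paragraph; everything afterward is a direct concatenation of the two preceding lemmas. Accordingly, the main thing to be careful about is the bookkeeping — that $\omega_n$ is indeed determined by the conditioning data, and that the spatial shift genuinely preserves the law of the right-jumps chain $(Z_x)$ — both of which follow immediately from the explicit two-step construction of $(X_n)$ and its environment given in Section~\ref{subsubsec:SingleSiteMarkovChain}.
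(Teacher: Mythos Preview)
Your proof is correct and follows essentially the same approach as the paper's: reduce to the case $n=0$ (the paper does this in one sentence by noting the bound is uniform in $\omega$, while you spell out the Markov-property/determinism justification), then combine Lemma~\ref{lem:ProbSurvivalZxProbTransienceXn} with Lemma~\ref{Lem:ProbZxGreater0} to get $\beta=\min\{p,q\}\cdot\beta_0$.
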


\begin{proof}
Since the claimed bound is uniform in the initial environment $\omega$, it suffices to consider
the case $x_0 = n = 0$. By Lemma \ref{Lem:ProbZxGreater0}, there exists some $\beta' > 0$
such that $\P_{\omega}(Z_x > 0, \forall x > 0) \geq \beta'$, for any initial environment $\omega$.
Thus, by Lemma \ref{lem:ProbSurvivalZxProbTransienceXn},
\begin{align*}
\P_{\omega}(\AM_0^+) \geq \min\{p,q\} \cdot \beta' \equiv \beta
\end{align*}
for any initial environment $\omega$.
\end{proof}

\begin{Cor}
\label{cor:NxDominatedByGeometric}
If $\alpha > 1/2$ then, for any initial environment $\omega$ and site $x \geq 0$,
\begin{align*}
\P_{\omega}(N_x \geq k) \leq (1 - \beta)^{k-1} ~,~ \mbox{ for all } k \geq 1
\end{align*}
where $\beta > 0$ is the constant in Corollary \ref{cor:PrAnPlusGreaterEqualBeta}.
\end{Cor}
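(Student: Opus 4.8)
The plan is to leverage Corollary \ref{cor:PrAnPlusGreaterEqualBeta} directly: it says that whenever the walk stands at $x$, it has conditional probability at least $\beta$ of jumping right and never returning to $x$, regardless of the past. Applying this estimate at each successive visit to $x$ should show that $N_x$ is stochastically dominated by a geometric random variable with parameter $\beta$.

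I would first record that $\{N_x \geq k\} = \{T_x^{(k)} < \infty\}$ and that, by the convention governing the iterated hitting times, $\{T_x^{(i+1)} < \infty\} \subseteq \{T_x^{(i)} < \infty\}$ for every $i \geq 1$. It therefore suffices to prove the one-step bound $\P_\omega(T_x^{(i+1)} < \infty) \leq (1-\beta)\,\P_\omega(T_x^{(i)} < \infty)$ for all $i \geq 1$, and then iterate, using $\P_\omega(T_x^{(1)} < \infty) \leq 1$, to obtain $\P_\omega(N_x \geq k) = \P_\omega(T_x^{(k)} < \infty) \leq (1-\beta)^{k-1}$.

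For the one-step bound, the idea is to transfer Corollary \ref{cor:PrAnPlusGreaterEqualBeta} from deterministic times to the stopping time $T := T_x^{(i)}$, working with the filtration $\FM_n := \sigma(X_0,\dots,X_n)$. On the event $\{T = n\}$, conditioning on the stopped $\sigma$-algebra $\FM_T$ is the same as conditioning on the path $(X_0,\dots,X_n)$, which by definition of $T$ ends at $X_n = x$; summing over $n \geq 0$ and over all such paths and invoking \eqref{eq:PrAnPlusGreaterEqualBeta} should give $\P_\omega(\AM_T^+ \mid \FM_T) \geq \beta$ a.s.\ on $\{T < \infty\}$. On the event $\AM_T^+$ the walk steps right at time $T$ and thereafter stays strictly above its time-$T$ position $x$, hence never returns to $x$, so $\AM_T^+ \subseteq \{T_x^{(i+1)} = \infty\}$ and therefore $\P_\omega(T_x^{(i+1)} < \infty \mid \FM_T) \leq 1-\beta$ a.s.\ on $\{T < \infty\}$. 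Since $\{T < \infty\} = \{T_x^{(i)} < \infty\} \in \FM_T$ and $\{T_x^{(i+1)} < \infty\} \subseteq \{T < \infty\}$, taking expectations yields the one-step bound.

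I expect the only delicate point to be the measure-theoretic bookkeeping in the previous paragraph — passing from the deterministic-path statement of Corollary \ref{cor:PrAnPlusGreaterEqualBeta} to the random visit time $T_x^{(i)}$; the entire probabilistic content of the corollary (hence of Lemma \ref{Lem:ProbZxGreater0}) is what does the work, and once the transfer is in hand the rest is a one-line iteration.
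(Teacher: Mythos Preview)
Your proposal is correct and follows essentially the same route as the paper: both arguments apply Corollary~\ref{cor:PrAnPlusGreaterEqualBeta} at each successive visit to $x$ to obtain the one-step bound $\P_\omega(T_x^{(i+1)}<\infty\mid T_x^{(i)}<\infty)\le 1-\beta$, then iterate. The paper carries out the conditioning by an explicit sum over paths $(x_0,\dots,x_n)$ ending at the $i$-th visit to $x$, whereas you phrase the same decomposition via the stopped $\sigma$-algebra $\FM_{T_x^{(i)}}$; these are equivalent formulations of the same argument.
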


\begin{proof}
Let $A_x^{(i)}$ be the set of all random walk paths $(x_0, x_1, ..., x_n)$, of any length $n$,
which end in an $i$-th hitting time of site $x$. That is, $\{X_0 = x_0, X_1 = x_1,..., X_n = x_n\} \implies T_x^{(i)} = n$.
For brevity we denote $(X_0,...,X_n)$ as $X_0^n$ and $(x_0,...,x_n)$ as $x_0^n$. By Corollary
\ref{cor:PrAnPlusGreaterEqualBeta}, for any $i \geq 1$, we have
\begin{align*}
& \P_{\omega}(T_x^{(i+1)} < \infty | T_x^{(i)} < \infty) \\
& = \sum_{x_0^n \in A_x^{(i)}} \P_{\omega}(X_0^n = x_0^n | T_x^{(i)} < \infty)
\cdot \P_{\omega} (T_x^{(i+1)} < \infty | T_x^{(i)} < \infty, X_0^n = x_0^n) \\
& = \sum_{x_0^n \in A_x^{(i)}} \P_{\omega}(X_0^n = x_0^n | T_x^{(i)} < \infty)
\cdot \P_{\omega} (T_x^{(i+1)} < \infty | X_0^n = x_0^n) \\
& \leq \sum_{x_0^n \in A_x^{(i)}} \P_{\omega}(X_0^n = x_0^n | T_x^{(i)} < \infty)
\cdot \P_{\omega}((\AM_n^+)^c|X_0^n = x_0^n) \\
& \leq (1- \beta).
 \end{align*}
Hence, for each $k \geq 1$,
\begin{align*}
\P_{\omega}(N_x \geq k)
= \P_{\omega}(T_x^{(1)} < \infty) \cdot \prod_{i = 1}^{k-1} \P_{\omega} (T_x^{(i+1)} < \infty | T_x^{(i)} < \infty)
\leq (1-\beta)^{k-1}.
\end{align*}
 \end{proof}

\begin{Cor}
\label{cor:BxDominatedByGeometric}
If $\alpha > 1/2$ then, for any initial environment $\omega$ and site $x \geq 0$,
\begin{align}
\label{eq:ExponentialBoundOnBx}
\P_{\omega}(B_x \geq k) \leq (1 - \beta)^k ~,~ \mbox{ for all } k \geq 1
\end{align}
where $\beta > 0$ is the constant in Corollary \ref{cor:PrAnPlusGreaterEqualBeta}.
In particular, by the Borel-Cantelli lemma,
\begin{align*}
\P_{\omega}(\BM_{\epsilon}) = 1, \mbox{ for each } 0 < \epsilon < 1.
\end{align*}
\end{Cor}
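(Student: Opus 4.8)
The plan is to mimic the proof of Corollary~\ref{cor:NxDominatedByGeometric}, replacing successive \emph{returns} to $x$ by successive \emph{descents} below $x$. For $j \geq 0$ let $S_j = \inf\{n \geq T_x : X_n = x - j\}$ (so $S_0 = T_x$) be the first time from $T_x$ onward that the walk reaches level $x - j$. Since the walk is nearest neighbor and sits at $x$ at time $T_x$, whenever these times are finite they are strictly increasing, $S_0 < S_1 < S_2 < \cdots$, and --- using the convention $B_x = 0$ when $T_x = \infty$ --- the definition of $B_x$ gives
\[
\{B_x \geq k\} = \{S_k < \infty\} = \{S_{k-1} < \infty\} \cap \{\exists\, m > S_{k-1} : X_m = x - k\}.
\]

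The key estimate is $\P_{\omega}(S_k < \infty \mid S_{k-1} < \infty) \leq 1 - \beta$. Exactly as in the proof of Corollary~\ref{cor:NxDominatedByGeometric}, I would condition on the path $X_0^n = x_0^n$ up to the stopping time $S_{k-1} = n$, where necessarily $x_n = x - k + 1$. On such a path the event $\{\exists\, m > n : X_m = x - k\}$ is contained in $\{\exists\, m > n : X_m \leq x_n\} = (\AM_n^+)^c$, so Corollary~\ref{cor:PrAnPlusGreaterEqualBeta} gives $\P_{\omega}((\AM_n^+)^c \mid X_0^n = x_0^n) \leq 1 - \beta$; summing over the admissible paths yields the estimate. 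Chaining it over $j = 1, \dots, k$ and using $\P_{\omega}(S_0 < \infty) \leq 1$,
\[
\P_{\omega}(B_x \geq k) = \P_{\omega}(S_k < \infty) = \P_{\omega}(S_0 < \infty) \prod_{j=1}^{k} \P_{\omega}(S_j < \infty \mid S_{j-1} < \infty) \leq (1-\beta)^k,
\]
which is \eqref{eq:ExponentialBoundOnBx}.

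For the Borel--Cantelli conclusion, fix $0 < \epsilon < 1$. Since $B_x$ is integer valued, $\{B_x > \epsilon x\} = \{B_x \geq \lfloor \epsilon x \rfloor + 1\}$, so by \eqref{eq:ExponentialBoundOnBx} and $\lfloor \epsilon x \rfloor + 1 > \epsilon x$,
\[
\sum_{x \geq 0} \P_{\omega}(B_x > \epsilon x) = \sum_{x \geq 0} \P_{\omega}(B_x \geq \lfloor \epsilon x \rfloor + 1) \leq \sum_{x \geq 0} (1-\beta)^{\lfloor \epsilon x \rfloor + 1} \leq \sum_{x \geq 0} (1-\beta)^{\epsilon x} < \infty,
\]
the last sum being a convergent geometric series since $0 < (1-\beta)^{\epsilon} < 1$. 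By Borel--Cantelli, $\P_{\omega}(B_x > \epsilon x \text{ for infinitely many } x) = 0$, i.e.\ $\P_{\omega}(\BM_{\epsilon}) = 1$.

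This corollary is really a bookkeeping variant of Corollary~\ref{cor:NxDominatedByGeometric}, so I do not expect a genuine obstacle; the only point that takes a moment of care is the first step --- verifying that $\{B_x \geq k\}$ really is the decreasing intersection $\bigcap_{j=1}^{k}\{S_j < \infty\}$, and that after conditioning on the path up to $S_{k-1}$ the event of subsequently reaching $x - k$ lies in $(\AM_{S_{k-1}}^+)^c$, so that the uniform lower bound $\beta$ from Corollary~\ref{cor:PrAnPlusGreaterEqualBeta} is applicable.
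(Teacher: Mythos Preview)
Your proof is correct and follows essentially the same approach as the paper: your stopping times $S_j$ are exactly the paper's $\tau_x^{(j)}$, and the chaining argument via Corollary~\ref{cor:PrAnPlusGreaterEqualBeta} is identical. You also spell out the Borel--Cantelli step in more detail than the paper, which simply cites the lemma without writing out the summability.
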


\begin{proof}
The proof is similar to that of Corollary \ref{cor:NxDominatedByGeometric}.
For $x \in \Z$, let $\tau_x^{(0)}$ be the first hitting time of site $x$, and let $\tau_x^{(i)}$, $i \in \N$,
be the first time greater than $\tau_x^{(i-1)}$ at which the walk steps backward from its
position $x - (i-1)$ at time $\tau_x^{(i-1)}$. That is, $\tau_x^{(0)} = T_x$, and for $i \geq 1$,
\begin{align*}
\tau_x^{(i)}
& = \inf \{n > \tau_x^{(i-1)} : X_n < X_{\tau_x^{(i-1)}} \} \\
& = \inf \{n > T_x : X_n = x - i \}
\end{align*}
with the convention $\tau_x^{(j)} = \infty$, for all $j > i$, if $\tau_x^{(i)} = \infty$. Also, let $A_x^{(i)}$ be the
set of all random walk paths $(x_0, x_1, ..., x_n)$, of any length $n$, which end in an $i$-th ``back step time''
from site $x$. That is, $\{X_0 = x_0, X_1 = x_1,..., X_n = x_n\} \implies \tau_x^{(i)} = n$.
As above, we denote $(X_0,...,X_n)$ as $X_0^n$ and $(x_0,...,x_n)$ as $x_0^n$. By Corollary
\ref{cor:PrAnPlusGreaterEqualBeta}, for any $i \geq 0$, we have
\begin{align*}
& \P_{\omega}(\tau_x^{(i+1)} < \infty | \tau_x^{(i)} < \infty) \\
& = \sum_{x_0^n \in A_x^{(i)}} \P_{\omega}(X_0^n = x_0^n | \tau_x^{(i)} < \infty)
\cdot \P_{\omega} (\tau_x^{(i+1)} < \infty | \tau_x^{(i)} < \infty, X_0^n = x_0^n) \\
& = \sum_{x_0^n \in A_x^{(i)}} \P_{\omega}(X_0^n = x_0^n | \tau_x^{(i)} < \infty)
\cdot \P_{\omega} (\tau_x^{(i+1)} < \infty | X_0^n = x_0^n) \\
& \leq \sum_{x_0^n \in A_x^{(i)}} \P_{\omega}(X_0^n = x_0^n | \tau_x^{(i)} < \infty)
\cdot \P_{\omega}((\AM_n^+)^c|X_0^n = x_0^n) \\
& \leq (1- \beta).
\end{align*}
So, for each $k \geq 1$,
\begin{align*}
\P_{\omega}(B_x \geq k)
= \P_{\omega}(\tau_x^{(0)} < \infty) \cdot \prod_{i = 0}^{k-1} \P_{\omega} (\tau_x^{(i+1)} < \infty | \tau_x^{(i)} < \infty)
\leq (1-\beta)^k.
\end{align*}
\end{proof}

\subsection{Proof of Theorem \ref{thm:RightLeftTransienceCutoff}}
\label{subsec:CutoffForRightLeftTransience}

\begin{proof}[Proof of Theorem \ref{thm:RightLeftTransienceCutoff}] 
If $\alpha > 1/2$ then Corollary \ref{cor:BxDominatedByGeometric} implies that $B_0$ is $\P_{\omega}$ a.s. finite,
for any initial environment $\omega$. Thus, by part (ii) of Lemma \ref{lem:SimpleTransConditions}, for $\alpha > 1/2$
we must have $X_n \rightarrow \infty$, $\P_{\omega}$ a.s., for any initial environment $\omega$. It follows by symmetry
that, for $\alpha < 1/2$ and any initial environment $\omega$, $X_n \rightarrow -\infty$, $\P_{\omega}$ a.s.
\end{proof}

\subsection{Proof of Theorem \ref{thm:BallisticityWhenNonCritical}}
\label{subsec:Ballisticity}

For the proof of Theorem \ref{thm:BallisticityWhenNonCritical} we will assume
that $\alpha > 1/2$, the case $\alpha < 1/2$ follows by symmetry considerations.
The primary ingredients for the proof are Corollaries \ref{cor:NxDominatedByGeometric} and
\ref{cor:BxDominatedByGeometric}, above, and Lemmas \ref{lem:SimpleConsequencesAlphaGreaterHalf}
and \ref{lem:StrongLawForNx}, given below. Lemma \ref{lem:SimpleConsequencesAlphaGreaterHalf} is a
simple consequence of Theorem \ref{thm:RightLeftTransienceCutoff}. Lemma \ref{lem:StrongLawForNx}
shows that, when $\alpha > 1/2$, the sequence $(N_x)$ obeys a strong law of large numbers. The
proof of this fact is somewhat lengthy and is deferred to Appendix \ref{sec:StrongLawForNx}.

\begin{Lem}
\label{lem:SimpleConsequencesAlphaGreaterHalf}
Assume that $\alpha > 1/2$ and $X_0 = 0$.
\begin{itemize}
\item[(i)] For all $x \geq 0$, the random variables $N_x$, $L_x$, and $R_x$ are each
independent of the environment to the left of site $x$ when site $x$ is first reached:
\begin{align*}
N_x, L_x, R_x \perp \{ \omega_{T_x}(y), y < x \}.
\end{align*}
\item[(ii)] $N_x^y$ and $N_y$ are independent, for all $0 \leq x < y$.
\item[(iii)] If, for some $y \geq 0$, $\omega(x)$ is constant for $x \geq y$, then $N_x$ and $N_y$
have the same distribution for all $x \geq y$. Similarly, if $\omega(x) = \omega(y)$ for all $x \geq y$, then
$R_x$ and $R_y$ have the same distribution for all $x \geq y$, and $L_x$
and $L_y$ have the same distribution for all $x \geq y$.
\end{itemize}
\end{Lem}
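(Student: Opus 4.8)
The plan is to carry out everything inside the coupling of $(X_n)$ to the independent family of extended single site chains $\{(\Yh_n^z,J_n^z)\}$, and to isolate as the key point the following assertion: for every $x\ge 0$, the triple $(N_x,L_x,R_x)$ is, $\P_\omega$-a.s., a deterministic function of the ``right-hand'' jump patterns $\{J_n^z:z\ge x,\ n\in\N\}$ alone. Write $\FM_{\ge x}$ and $\FM_{<x}$ for the $\sigma$-algebras generated, respectively, by $\{J_n^z:z\ge x\}$ and by $\{J_n^z:z<x\}$; since the single site chains are run independently over sites, $\FM_{\ge x}$ and $\FM_{<x}$ are independent. Note also that all the quantities involved are genuine finite random variables: since $\alpha>1/2$, Corollary \ref{cor:NxDominatedByGeometric} gives $N_x<\infty$, hence $L_x,R_x<\infty$, $\P_\omega$-a.s., and Theorem \ref{thm:RightLeftTransienceCutoff} gives $T_x<\infty$ $\P_\omega$-a.s. for every $x\ge0$.

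Granting the key assertion, the three parts follow quickly. For (i): before time $T_x$ the walk never visits a site $\ge x$, so the path $(X_0,\dots,X_{T_x})$ is determined by $\{J_n^z:z<x\}$, whence $\{\omega_{T_x}(y):y<x\}$ — a deterministic function of that path and of the fixed $\omega$ — is $\FM_{<x}$-measurable; combined with the a.s.\ $\FM_{\ge x}$-measurability of $(N_x,L_x,R_x)$ and the independence $\FM_{<x}\perp\FM_{\ge x}$, this proves $N_x,L_x,R_x\perp\{\omega_{T_x}(y):y<x\}$. For (ii), apply the same observation at $y$: the path up to $T_y$ is $\FM_{<y}$-measurable, hence so is $N_x^y$, while $N_y$ is a.s.\ $\FM_{\ge y}$-measurable, and $\FM_{<y}\perp\FM_{\ge y}$. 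For (iii), suppose $\omega(z)$ is constant on $[y,\infty)$; then for each $x\ge y$ the joint law of $\{J_n^z:z\ge x\}$ coincides with that of $\{J_n^z:z\ge y\}$ under the spatial shift $z\mapsto z-(x-y)$, since every single site chain involved starts from the same common initial configuration. The function expressing $N_x$ in terms of $\{J_n^z:z\ge x\}$ is the same one expressing $N_y$ in terms of $\{J_n^z:z\ge y\}$, transported by that shift, so $N_x$ and $N_y$ have the same distribution; and since on the same a.s.\ event $R_x$ and $L_x$ are functions of $(J_k^x)_k$ together with $N_x$ — all within $\{J_n^z:z\ge x\}$ — the identical argument shows $R_x$ has the same distribution as $R_y$ and $L_x$ the same distribution as $L_y$.

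It remains to prove the key assertion. From $\{J_n^z:z\ge x\}$ define a deterministic ``reduced walk'' on $\{z\ge x\}$: it moves by the usual rule (the $k$-th time it is at a site $z$ it steps by $J_k^z$) with the single modification that at the site $x$ a value $J_k^x=-1$ is read as ``remain at $x$ and increment the visit counter to $k+1$''. Let $\Nh_x,\Rh_x,\Lh_x$ be, respectively, the number of visits of this reduced walk to $x$, the number of $k\le\Nh_x$ with $J_k^x=1$, and the number with $J_k^x=-1$; these are $\FM_{\ge x}$-measurable. Let $E$ be the event that $T_x<\infty$ and that every left jump made by $(X_n)$ from $x$ at a time $\ge T_x$ is eventually followed by a return to $x$. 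By Corollary \ref{cor:NxDominatedByGeometric} there are a.s.\ only finitely many visits to $x$, hence only finitely many such left jumps; and at the instant of any one of them the walk sits at $x-1$, from which — by the Markov property of the pair $(X_n,\omega_n)$ together with Theorem \ref{thm:RightLeftTransienceCutoff}, applied to the walk restarted from $x-1$ in whatever environment is then present — it reaches $x$ again a.s. Hence $\P_\omega(E)=1$. On $E$ an induction over successive visits to $x$ shows that $(X_n)$ and the reduced walk have identical sequences of visits to $x$: at a visit with $J_k^x=1$ both launch an excursion into $\{z\ge x+1\}$ driven by the same data $\{J_n^z:z\ge x+1\}$ from the same counter values (those counters being left untouched by the true walk during its left excursions from $x$ and while it sits at $x$), so the excursion returns to $x$ in both or in neither; at a visit with $J_k^x=-1$ the true walk, on $E$, eventually returns to $x$, matching the reduced walk's immediate return. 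Therefore $N_x=\Nh_x$, $R_x=\Rh_x$, $L_x=\Lh_x$ on $E$, which is the key assertion.

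The substance of the proof lies entirely in this last identification; everything else is routine use of the independence of jump patterns across sites, together with, for (iii), the spatial-shift invariance of the right-hand patterns under a constant environment. The main obstacle, accordingly, is establishing $(N_x,L_x,R_x)=(\Nh_x,\Rh_x,\Lh_x)$ on a probability-one event: one must (a) invoke Corollary \ref{cor:NxDominatedByGeometric} to reduce ``every left excursion from $x$ returns'' to a statement about finitely many excursions, (b) use the Markov property of $(X_n,\omega_n)$ and Theorem \ref{thm:RightLeftTransienceCutoff} to see that each such excursion does return a.s., and (c) carry out the inductive comparison of the true and reduced walks carefully, in particular tracking the counters at sites $\ge x$ and using that a left excursion from $x$ changes none of them.
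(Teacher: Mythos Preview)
Your proof is correct and follows essentially the same approach as the paper: the key point in both is that, since $\alpha>1/2$, every left excursion from $x$ returns to $x$ almost surely (by Theorem~\ref{thm:RightLeftTransienceCutoff}), so $(N_x,L_x,R_x)$ depend only on the jump data at sites $\ge x$. The paper's argument is a terse two-sentence version of this; your construction of the reduced walk and the explicit $\sigma$-algebra bookkeeping make the same idea rigorous in more detail, but there is no genuine difference in strategy.
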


\begin{proof}
Since $\alpha > 1/2$ and $X_0 = 0$, Theorem \ref{thm:RightLeftTransienceCutoff} shows
that $T_x$ is a.s. finite, for each $x \geq 0$, and that regardless of the environment to the left of site $x$
at time $T_x$, the walks returns to site $x$ with probability 1 each time it steps left from $x$.
This implies (i). Now, (ii) and (iii) follow easily since (i) shows that the distribution of $N_x$, $L_x$, and $R_x$
are each entirely determined by the values of $\omega_{T_x}(y), y \geq x$, which are the same as the original
values $\omega(y), y \geq x$.
\end{proof}

\begin{Lem}
\label{lem:StrongLawForNx}
If $\alpha > 1/2$ then, for any initial environment $\omega$,
\begin{align*}
\lim_{n \to \infty} \frac{1}{n} \sum_{x = 1}^n (N_x - \E_{\omega}(N_x)) = 0, ~\P_{\omega} \mbox{ a.s. }
\end{align*}
\end{Lem}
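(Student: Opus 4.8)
The plan is to establish exponential decay of correlations for the sequence $(N_x)_{x\ge 1}$ and then run a standard blocking argument. Three facts proved above supply everything needed. By Corollary \ref{cor:NxDominatedByGeometric}, $N_x$ is stochastically dominated, uniformly in $\omega$ and $x$, by a geometric random variable, so $\sup_{\omega,x}\E_\omega(N_x^m)<\infty$ for every $m\ge 1$; in particular $\E_\omega(N_x)$ is finite and uniformly bounded, so the statement makes sense. By Lemma \ref{lem:SimpleConsequencesAlphaGreaterHalf}(ii), the ``early'' count $N_x^y$ is independent of $N_y$ whenever $0\le x<y$. By Corollary \ref{cor:BxDominatedByGeometric}, $\P_\omega(B_y\ge k)\le(1-\beta)^k$ for all $k\ge 1$.

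First I would bound $\Cov_\omega(N_x,N_y)$ for $x<y$. Decompose $N_x=N_x^{y}+(N_x-N_x^{y})$. Since the walk can revisit $x$ after time $T_y$ only if it ever steps back at least $y-x$ sites from $y$, we have $\{N_x>N_x^{y}\}\subseteq\{B_y\ge y-x\}$; combining this inclusion with the uniform moment bounds (via Cauchy--Schwarz) and Corollary \ref{cor:BxDominatedByGeometric} gives $\E_\omega(N_x-N_x^{y})\le C(1-\beta)^{(y-x)/2}$ and $\E_\omega\big[(N_x-N_x^{y})N_y\big]\le C(1-\beta)^{(y-x)/2}$ for a constant $C=C(p,q,R,L)$. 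Since $\Cov_\omega(N_x^{y},N_y)=0$ by Lemma \ref{lem:SimpleConsequencesAlphaGreaterHalf}(ii), it follows that $\Cov_\omega(N_x,N_y)=\Cov_\omega(N_x-N_x^{y},N_y)$, and hence $|\Cov_\omega(N_x,N_y)|\le C'\rho^{|x-y|}$ with $\rho:=(1-\beta)^{1/2}<1$, uniformly in $\omega$. Writing $S_n:=\sum_{x=1}^n\big(N_x-\E_\omega(N_x)\big)$, summing the covariance bound over $x,x'\le n$ yields $\Var_\omega(S_n)\le C''n$.

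The almost sure convergence $S_n/n\to 0$ is then routine. Along the subsequence $n_k:=k^2$, Chebyshev gives $\P_\omega(|S_{n_k}|>\epsilon n_k)\le C''n_k/(\epsilon^2 n_k^2)=O(k^{-2})$, which is summable, so $S_{n_k}/n_k\to 0$ $\P_\omega$-a.s.\ by Borel--Cantelli (applied along a sequence $\epsilon\downarrow 0$). To fill the gaps, for $n_k\le n<n_{k+1}$ one has $|S_n-S_{n_k}|\le D_k+C(n_{k+1}-n_k)$ with $D_k:=\sum_{x=n_k+1}^{n_{k+1}}N_x$; the moment bound gives $\E_\omega(D_k)=O(k)$ and the covariance bound gives $\Var_\omega(D_k)=O(k)$, hence $\E_\omega(D_k^2)=O(k^2)$, and a second Chebyshev--Borel--Cantelli step gives $D_k=o(k^2)$ $\P_\omega$-a.s. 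Since also $n_{k+1}-n_k=O(k)=o(n_k)$, we get $|S_n-S_{n_k}|=o(n_k)$, and therefore $|S_n|/n\le|S_{n_k}|/n_k+o(1)\to 0$ $\P_\omega$-a.s.

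The one step that needs genuine care is the covariance bound, and within it the bookkeeping of which $\sigma$-algebras (generated by the independent jump patterns) the quantities $N_x^{y}$ and $N_y$ are measurable with respect to, so that the independence in Lemma \ref{lem:SimpleConsequencesAlphaGreaterHalf}(ii) applies cleanly; everything downstream of $\Var_\omega(S_n)=O(n)$ is boilerplate. One could instead try to avoid correlations entirely by comparing $N_x$ with the block variable $N_x^{T_{x+\ell}}$ (visits to $x$ before first reaching $x+\ell$), which for $x$ ranging over a fixed residue class modulo $\ell$ are genuinely independent; but controlling the discrepancy $\sum_x\big(N_x-N_x^{T_{x+\ell}}\big)$ almost surely, rather than only in mean, ultimately requires essentially the same estimate, so the two routes are of comparable difficulty.
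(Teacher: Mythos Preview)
Your argument is correct and hinges on the same key estimate as the paper: an exponentially decaying covariance bound $|\Cov_\omega(N_x,N_y)|\le C\rho^{|x-y|}$, obtained from the independence of $N_x^y$ and $N_y$ (Lemma~\ref{lem:SimpleConsequencesAlphaGreaterHalf}(ii)) together with the inclusion $\{N_x>N_x^y\}\subset\{B_y\ge y-x\}$ and the geometric tail on $B_y$ (Corollary~\ref{cor:BxDominatedByGeometric}). The paper derives the same exponential decay slightly differently, expanding $\Cov(N_x,N_y)=\sum_{j,k\ge 1}\big(\P(N_x\ge k,N_y\ge j)-\P(N_x\ge k)\P(N_y\ge j)\big)$ and bounding each summand by both $(1-\beta)^{\max\{j,k\}-1}$ and $(1-\beta)^{y-x}$; your Cauchy--Schwarz route is a touch more direct. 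The real divergence is in the final step: once the covariance bound is in hand, the paper simply invokes Etemadi's strong law for dependent sequences (checking $\sum_{y}\sum_{x\le y}y^{-2}\Cov^+(N_x,N_y)<\infty$), whereas you give a self-contained subsequence argument from $\Var_\omega(S_n)=O(n)$. Both are standard; yours avoids an external citation at the cost of a few extra lines of bookkeeping.
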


\begin{proof}[Proof of Theorem \ref{thm:BallisticityWhenNonCritical}, Equation (\ref{eq:LiminfXnnGreaterDelta}), with $\alpha > 1/2$]
Let $\alpha > 1/2$, and fix any initial environment $\omega$. Also, let $\beta > 0$ be the constant defined
in Corollary \ref{cor:PrAnPlusGreaterEqualBeta}. We will show that:
\begin{itemize}
\item[(i)] $\limsup_{x \to \infty} \frac{1}{x} \sum_{y = 1}^x N_y \leq 1/\beta,~ \P_{\omega} \mbox{ a.s. }$
\item[(ii)] $\limsup_{x \to \infty} T_x/x \leq \limsup_{x \to \infty} \frac{1}{x} \sum_{y = 1}^x N_y,~ \P_{\omega} \mbox{ a.s. }$
\item[(iii)] $\liminf_{n \to \infty} X_n/n \geq \left(\limsup_{x \to \infty} T_x/x\right)^{-1},~ \P_{\omega} \mbox{ a.s. }$
\end{itemize}
The result (\ref{eq:LiminfXnnGreaterDelta}) follows directly from these three facts. \\

\noindent
\emph{Proof of (i)}:
This is immediate from Lemma \ref{lem:StrongLawForNx} and Corollary \ref{cor:NxDominatedByGeometric}. \\

\noindent
\emph{Proof of (ii)}:
Since $\alpha > 1/2$, $X_n \rightarrow \infty$ $\P_{\omega}$ a.s. So, $\sum_{x \leq 0} N_x$ is $\P_{\omega}$ a.s. finite.
Thus, $\P_{\omega}$ a.s. we have
\begin{align*}
\limsup_{x \to \infty} \frac{T_x}{x}
= \limsup_{x \to \infty} \frac{1}{x} \sum_{y = -\infty}^x N_y^x
\leq \limsup_{x \to \infty} \frac{1}{x} \sum_{y = -\infty}^x N_y
= \limsup_{x \to \infty} \frac{1}{x} \sum_{y = 1}^x N_y.
\end{align*}

\noindent
\emph{Proof of (iii)}:
For $0 < \epsilon < 1$, let $\BM_{\epsilon}' = \BM_{\epsilon} \cap \{T_x < \infty, \forall x > 0\}$, where $\BM_{\epsilon}$ is defined
by (\ref{eq:DefBepsilon}). On the event $\BM_{\epsilon}'$, for all sufficiently large $x$ and $T_x \leq n < T_{x+1}$, we have
\begin{align*}
\frac{X_n}{n} \geq \frac{x - \epsilon x}{T_{x+1}} = (1-\epsilon) \frac{x+1}{T_{x+1}} - \frac{1-\epsilon}{T_{x+1}}.
\end{align*}
So,
\begin{align*}
\liminf_{n \to \infty} \frac{X_n}{n}
\geq \liminf_{x \to \infty}~ \left( (1- \epsilon) \frac{x+1}{T_{x+1}} - \frac{1-\epsilon}{T_{x+1}} \right)
= (1-\epsilon) \cdot \left(\limsup_{x \to \infty} T_x/x\right)^{-1}.
\end{align*}
The result follows since $\P_{\omega}(\BM_{\epsilon}') = 1$, for each $\epsilon > 0$, due to
Corollary \ref{cor:BxDominatedByGeometric} and the fact that the random walk $(X_n)$
is a.s. right transient with $\alpha > 1/2$.
\end{proof}

\begin{proof}[Proof of Theorem \ref{thm:BallisticityWhenNonCritical}, Equation (\ref{eq:SpeedENxInverse}), with $\alpha > 1/2$]
By assumption $\omega(x)$ is constant for $x \geq m$, so Lemma \ref{lem:SimpleConsequencesAlphaGreaterHalf}
implies $N_x$ and $N_m$ are equal in law, for all $x \geq m$, under $\P_{\omega}$. Thus,
\begin{align}
\label{eq:ENxEqualEN0}
\E_{\omega}(N_x) = \E_{\omega}(N_m) \equiv \gamma, \mbox{ for all } x \geq m.
\end{align}
To show that $\lim_{n \to \infty} X_n/n = 1/\gamma$, $\P_{\omega}$ a.s., note
first that (\ref{eq:ENxEqualEN0}) and Lemma \ref{lem:StrongLawForNx} imply that
\begin{align}
\label{eq:LimNxAvg}
\lim_{x \to \infty} \frac{1}{x} \sum_{y = 1}^x N_y = \gamma,~ \P_{\omega} \mbox{ a.s. }
\end{align}
So, by point (ii) above, we have
\begin{align}
\label{eq:LimsupTxxBound}
\limsup_{x \to \infty} T_x/x \leq \gamma, ~\P_{\omega} \mbox{ a.s. }
\end{align}
On the other hand, on the event $\BM_{\epsilon}' = \BM_{\epsilon} \cap \{T_x < \infty, \forall x > 0\}$, we have
\begin{align}
\label{eq:AddNyNotTx}
\liminf_{x \to \infty} \frac{T_x}{x}
= \liminf_{x \to \infty} \frac{1}{x} \sum_{y={-\infty}}^x N_y^x
\geq \liminf_{x \to \infty} \frac{1}{x} \sum_{y=1}^{\floor{(1-\epsilon)x}} N_y^x
= \liminf_{x \to \infty} \frac{1}{x} \sum_{y=1}^{\floor{(1-\epsilon)x}} N_y.
\end{align}
Since $\P_{\omega}(\BM_{\epsilon}') = 1$, for each $\epsilon > 0$, and
the RHS of (\ref{eq:AddNyNotTx}) is equal to $(1-\epsilon) \gamma$,
$\P_{\omega}$ a.s., by (\ref{eq:LimNxAvg}), this implies
\begin{align}
\label{eq:LiminfTxxBound}
\liminf_{x \to \infty} T_x/x \geq \gamma, ~\P_{\omega} \mbox{ a.s. }
\end{align}
Together, (\ref{eq:LimsupTxxBound}) and (\ref{eq:LiminfTxxBound}) imply
$\lim_{x \to \infty} T_x/x = \gamma$, $\P_{\omega}$ a.s.,
so the result follows from Lemma \ref{lem:HittingTimesVersusSpeed}.
\end{proof}

\subsection{Proof of Theorem \ref{thm:L1Speed}}
\label{subsec:SpeedWithL1}

The proof of Theorem \ref{thm:L1Speed} is based on the speed formula
given in Theorem \ref{thm:BallisticityWhenNonCritical}, and uses the assumptions on $L$
and $\omega$ to obtain a more explicit expression for $\gamma$.

\begin{proof}[Proof of Theorem \ref{thm:L1Speed}]
We will prove the theorem under the assumption $\omega(x) = (q,0)$, for all $x \geq 0$. The case
$\omega(x) = (q,0)$ in a neighborhood of $+\infty$ follows immediately from this.
The main observation is that since $L = 1$ and the random walk starts from $X_0 = 0$
in an environment $\omega$ satisfying $\omega(x) = (q,0)$, for all $x \geq 0$, we have
\begin{align*}
\omega_n(x) = (q,0), \mbox{ for each  $n \geq 0$ and $x > X_n$}.
\end{align*}
That is, the environment to the right of the current position of the
random walk always consists entirely of sites in the $(q,0)$ configuration.
Consequently, when the walk jumps right the environment both at its current
position and to its right consists entirely of sites in the $(q,0)$ configuration:
\begin{align}
\label{eq:q0CurrentAndToRight}
\{ X_{n-1} = x-1 \mbox{ and } X_n = x \}~\implies~ \omega_n(y) = (q,0), \mbox{ for all } y \geq x.
\end{align}
Using this fact we will show that:
\begin{itemize}
\item[(i)] $\gamma \equiv \E_{\omega}(N_0) = \frac{1 + \eta}{1 - \eta}$,  where $\eta \equiv \P_{\omega}(T_{-1} < \infty)$.
\item[(ii)] $\eta$ satisfies $P(\eta) = 0$, where $P$ is as in \eqref{eq:DefPolynomialP}.
\end{itemize}
Also, using direct calculus arguments we will show that:
\begin{itemize}
\item[(iii)] The polynomial $P(t)$ has a unique real root in the interval (0,1).
\end{itemize}
Clearly, $\eta > \P_{\omega}(T_{-1} =  1) = 1-q$, and by Corollary \ref{cor:PrAnPlusGreaterEqualBeta}, we know
$\eta < 1$. Thus, the theorem follows from points (i)-(iii) and Theorem \ref{thm:BallisticityWhenNonCritical}. \\

\noindent
\emph{Proof of (i):}
Since $\alpha > 1/2$ the random walk returns to site $0$ with probability $1$ every time it steps left from $0$,
and by (\ref{eq:q0CurrentAndToRight}), applied in the case $x = 0$, we know that at each time $n$ when the random
walk returns to site $0$ after stepping left on its last visit, we have $\omega_n(x) = \omega_0(x) = (q,0)$, for all $x \geq 0$.
Therefore, since $\P_{\omega'}(T_{-1} < \infty)$ does not depend on the values of $\omega'(x)$, $x < 0$,
it follows that  $L_0$ is a geometric random variable with distribution
\begin{align*}
\P_{\omega}(L_0 = k) = \eta^k (1 - \eta),~ k \geq 0.
\end{align*}
Hence, by Lemma \ref{lem:SimpleConsequencesAlphaGreaterHalf},
\begin{align*}
\E_{\omega}(N_0)
= \E_{\omega}(R_0 + L_0)
\stackrel{(*)}{=} [\E_{\omega}(L_1) + 1] + \E_{\omega}(L_0)
= 2 \E_{\omega}(L_0) + 1
= \frac{1 + \eta}{1 - \eta}.
\end{align*}
Step (*) follows from the fact that $R_0 = L_1 + 1$ a.s., since the random walk is a.s. transient to $+\infty$. \\

\noindent
\emph{Proof of (ii):}
For $i \geq 0$, let $A_i$ be the event that the random walk steps right from site $0$ and eventually
returns $i$ times without ever jumping left from 0, and let $A_i'$ be the event that the random walk steps
right from site $0$ and eventually returns $i$ times without stepping left from $0$, but then does
step left on its next visit:
\begin{align*}
A_i & = \{N_0 \geq i + 1, T_{-1} > T_0^{(i+1)}\}, \\
A_i' & = \{N_0 \geq i+1, T_{-1} = T_0^{(i+1)} + 1\}.
\end{align*}
Clearly, $\P_{\omega}(A_0) = 1$. We claim also that:
\begin{align}
\label{eq:ProbAiPrimeGivenAi}
\P_{\omega}(A_i'|A_i) =
\left\{ \begin{array}{l}
(1-q)~, \mbox{ for } 0 \leq i \leq R-1\\
(1-p)~, \mbox{ for } i \geq R
\end{array} \right.
\end{align}
and
\begin{align}
\label{eq:ProbAi1GivenAi}
\P_{\omega}(A_{i+1}|A_i) =
\left\{ \begin{array}{l}
q \eta ~, \mbox{ for } 0 \leq i \leq R-1\\
p \eta ~, \mbox{ for } i \geq R.
\end{array} \right.
\end{align}
To see (\ref{eq:ProbAiPrimeGivenAi}), note that after jumping right from site $0$ and returning $i$ times in a row,
site $0$ will be in configuration $(q,i)$, for $0 \leq i \leq R-1$, and in configuration $(p,0)$ for $i \geq R$.
Thus, for $0 \leq i \leq R-1$, we have
\begin{align*}
\P_{\omega}(A_i'|A_i)
= \P_{\omega}\left(X_{T_0^{(i+1)}+1} = -1\right. \left| \omega_{T_0^{(i+1)}}(0) = (q,i)\right)
= (1 - q)
\end{align*}
and, for $i \geq R$, we have
\begin{align*}
\P_{\omega}(A_i'|A_i)
= \P_{\omega}\left(X_{T_0^{(i+1)}+1} = -1\right. \left| \omega_{T_0^{(i+1)}}(0) = (p,0)\right)
= (1 - p).
\end{align*}
Now, (\ref{eq:ProbAi1GivenAi}) follows from (\ref{eq:ProbAiPrimeGivenAi}) and the following
calculation which is valid for all $i \geq 0$:
\begin{align*}
\P_{\omega}(A_{i+1}|A_i)
& = \P_{\omega}(X_{T_0^{(i+1)}+1} = 1| A_i) \cdot \P_{\omega}(T_0^{(i+2)} < \infty|A_i, X_{T_0^{(i+1)} + 1} = 1) \\
& = \P_{\omega}((A_i')^c| A_i) \cdot \eta.
\end{align*}
The second equality above follows from (\ref{eq:q0CurrentAndToRight}), which implies that on the event
$\{X_{T_0^{(i+1)} + 1} = 1\}$, all sites $x \geq 1$ are in the $(q,0)$ configuration at time ${T_0^{(i+1)} + 1}$.

Now, from (\ref{eq:ProbAiPrimeGivenAi}) and (\ref{eq:ProbAi1GivenAi}), along with
the fact $\P_{\omega}(A_0) = 1$, we conclude that
\begin{align*}
\P_{\omega}(A_i')
& =  \bigg(\prod_{j = 1}^i \P_{\omega}(A_j|A_{j-1}) \bigg) \cdot \P_{\omega}(A_i'|A_i)
= \left\{ \begin{array}{l}
(q\eta)^i(1-q)~, \mbox{ for } 0 \leq i \leq R-1\\
(q\eta)^R (p\eta)^{i - R} (1-p)~, \mbox{ for } i \geq R.
\end{array} \right.
\end{align*}
So,
\begin{align*}
\eta = \P_{\omega}(T_{-1} < \infty) = \sum_{i = 0}^{\infty} \P_{\omega}(A_i')
= (1-q) \frac{1 - (q \eta)^R}{1 - q \eta} + (1-p) \frac{(q \eta)^R}{1 - p \eta}.
\end{align*}
For $0 < \eta < 1$, this condition is equivalent to $P(\eta) = 0$. \\

\noindent
\emph{Proof of (iii):}
From point (ii) above we know that $\eta\in(0,1)$ is a root of the polynomial $P(t)$. We now show
that there cannot be any other roots in $(0,1)$. Observe that $t=1$ is a root of $P$ and that $P$ factors as
$$
P(t)=(1-t)\Big(1-q+(pq-p-q)t+pqt^2-(p-q)q^Rt^R\Big)\equiv(1-t)Q(t).
$$
Thus, we need to show that the only root of $Q$ in $(0,1)$ is $\eta$.
Observe that $\frac1q>1$ is a root of $Q$.  For $R \geq 3$, we have
$$
Q''(t) = 2pq-(p-q)q^R R(R-1)t^{R-2}.
$$
So, if $R \geq 3$ and $q\ge p$ then $Q$ is convex and can have at most two
real roots.  Also, if $R \in \{1,2\}$ then $Q$ is quadratic and, thus, has at most
two real roots. In either case, this completes the proof.

Now assume that $R \geq 3$ and $q<p$. In this case, $Q''$ has one real root. Thus, $Q'$ can
have no more than two real roots. Let $t^+$ denote the largest root of $Q$ in $(0,1)$. We will
show below that $Q(1)<0$. Using this along with the facts that $Q(t^+)=Q(\frac1q)=0$ and $Q(t)<0$
for sufficiently large $t$, it follows that $Q'$ has two roots in $(t^+,\infty)$. If there were
another root $t^-\in(0,1)$ of $Q$, then $Q'$ would have to have a root in $(t^-,t^+)$,
but this is impossible since $Q'$ cannot have more than two real roots. It remains
to show that $Q(1)<0$.

We have
\begin{equation}\label{Q(1)}
Q(1)=1-2q+q^{R+1}-p(1-2q+q^R).
\end{equation}
Since we are assuming that $\alpha>\frac12$, it follows from Proposition 1 that
$p>\frac{1-2q+q^{R+1}}{1-2q+q^R}\equiv p_0$, if $1-2q+q^{R+1}>0$. On the other hand, if
$1-2q+q^{R+1}\le0$, then $p\in(0,1)$ is unrestricted. In the former case, it follows from
\eqref{Q(1)} that for any $q$, $Q(1)<1-2q+q^{R+1}-p_0(1-2q+q^R)=0$. In the latter case:
\begin{itemize}
\item If $1 - 2q + q^R = 0$, then \eqref{Q(1)} implies $Q(1) = 1 - 2q + q^{R+1} < 0$. 
\item If $1 - 2q + q^R > 0$, then \eqref{Q(1)} implies $Q(1) < 1 - 2q + q^{R+1} \leq 0$, for all $p \in (0,1)$. 
\item If $1 - 2q + q^R < 0$, then \eqref{Q(1)} implies $Q(1) < 1 - 2q + q^{R+1} - (1 - 2q + q^R) < 0$, for all $p \in (0,1)$. 
\end{itemize}
\end{proof}

\subsection{Proof of Theorem \ref{thm:R1Speed}}
\label{subsec:SpeedWithR1}

Unlike the proof of Theorem \ref{thm:L1Speed} for the speed with $L = 1$, the proof of Theorem
\ref{thm:R1Speed} for the speed with $R=1$ does not rely on the implicit characterization of the speed
given by Theorem \ref{thm:BallisticityWhenNonCritical} in terms of $\gamma$. Instead, the proof is based
on a direct method for estimating the hitting times $T_x$ for large $x$.

\begin{proof}[Proof of Theorem \ref{thm:R1Speed}]
For $0 \leq i \leq L-1$, we define $a_i$ to be the expected hitting time of site $1$, starting from site $0$,
in an initial environment with all sites $x < 0$ in the $(p,0)$ configuration and site $0$ in the $(p,i)$
configuration. Also, we define $a_L$ to be the expected hitting time of site $1$, starting from site $0$,
in an initial environment with all sites $x < 0$ in the $(p,0)$ configuration and site $0$ in the $(q,0)$
configuration.
\begin{align*}
a_i = \E_{\omega^{(i)}}(T_1) ~,~  0 \leq i \leq L,
\end{align*}
where the environments $\omega^{(i)}$ satisfy:
\begin{align*}
\omega^{(i)}(x) & = (p,0) ~,~ x < 0 ~\mbox{ and }~ 0 \leq i \leq L. \\
\omega^{(i)}(0) &= (p,i) ~,~0 \leq i \leq L-1. \\
\omega^{(L)}(0) & = (q,0).
\end{align*}

The proof proceeds in two steps. First we set up a linear system of equations for the $a_i$'s,
which can be solved to obtain the desired speed formula in the case that the initial environment $\omega$
satisfies $\omega(x) = (p,0)$, for all $x < 0$. Then, using this result, we show that the same speed formula
holds in the general case. \\

\noindent
\emph{Case (1)}: $\omega(x) = (p,0)$, for all $x < 0$. \\

Since $\alpha > 1/2$, $T_x$ is a.s. finite for each $x > 0$, and we define
$\Delta_x$, $x \geq 0$, by
\begin{align*}
\Delta_x = T_{x+1} - T_x.
\end{align*}
The key observation is that because $R = 1$ and the random walk starts at $X_0 = 0$ in an
environment $\omega$ satisfying $\omega(x) = (p,0)$, for all $x < 0$, we have
\begin{align*}
\omega_n(x) = (p,0), ~\mbox{ for each } n \geq 0 \mbox{ and } x < X_n.
\end{align*}
That is, the environment to the left of the current position of the random walk always consists entirely
of sites in the $(p,0)$ configuration. Applying this fact at the random time $T_x$ it follows that,
for each $x > 0$, $\Delta_x$ is independent of $\Delta_0,...,\Delta_{x-1}$ and has distribution:
\begin{align*}
\P_{\omega}(\Delta_x = k) & = \P_{\omega^{(i)}}(T_1 = k) ~, \mbox{ if } \omega(x) = (p,i) ~,~ 0 \leq i \leq L-1. \\
\P_{\omega}(\Delta_x = k) & = \P_{\omega^{(L)}}(T_1 = k) ~, \mbox{ if } \omega(x) = (q,0).
\end{align*}
Thus, defining
\begin{align*}
A_i^x & = \{ 0 \leq y \leq x-1 : \omega(x) = (p,i)\} ~,~0 \leq i \leq L-1 \\
A_L^x & = \{ 0 \leq y \leq x-1: \omega(x) = (q,0)\}
\end{align*}
and applying the strong law of large numbers for the i.i.d. random variables
$\{\Delta_y: \omega(y) = (p,i)\}$ and $\{\Delta_y: \omega(y) = (q,0)\}$ we have that $\P_{\omega}$ a.s.
\begin{align}
\label{eq:SLLNDecompositionTxx}
\lim_{x \to \infty} T_x/x
= \lim_{x \to \infty} \frac{1}{x} \sum_{i = 0}^L  \sum_{y \in A_i^x} \Delta_y
= \lim_{x \to \infty} \sum_{i = 0}^L \frac{|A_i^x|}{x} \left(  \frac{1}{|A_i^x|} \sum_{y \in A_i^x} \Delta_y \right)
= \sum_{i = 0}^L d_i a_i. \footnotemark{}
\end{align}
\footnotetext{Of course, in order to apply the strong law to conclude that
$\lim_{x \to \infty} \frac{1}{|A_i^x|} \sum_{y \in A_i^x} \Delta_y = a_i$, we need
$|A_i^x| \rightarrow \infty$. However, if $|A_i^x| \not\rightarrow \infty$, for some $i$,
then $d_i = 0$. So, $\lim_{x \to \infty} \frac{1}{x} \sum_{y \in A_i^x} \Delta_y = 0 = d_i a_i$,
and (\ref{eq:SLLNDecompositionTxx}) still holds.}
So, by Lemma \ref{lem:HittingTimesVersusSpeed},
\begin{align}
\label{eq:SpeedGivenByHittingTimes}
\lim_{n \to \infty} \frac{X_n}{n} = \frac{1}{\sum_{i=0}^L d_i a_i} ~,~ \P_{\omega} \mbox{ a.s. }
\end{align}

Now, by conditioning on the first step of the walk it is easy to see that
the following relations between the $a_i$'s hold:
\begin{align}
\label{eq:LinearSystemExpectedHittingTimes}
a_i & = p \cdot 1 ~+~ (1-p) \cdot (1 + a_0 + a_{i+1}) ~,~ 0 \leq i \leq L-1. \nonumber \\
a_L & = q \cdot 1 ~+~ (1-q) \cdot (1 + a_0 + a_L).
\end{align}
One possible solution to the system (\ref{eq:LinearSystemExpectedHittingTimes}) is $a_0 = a_1 = ... = a_L = \infty$.
However, by (\ref{eq:SpeedGivenByHittingTimes}), this implies $X_n/n \rightarrow 0$, $\P_{\omega}$ a.s., which
contradicts Theorem \ref{thm:BallisticityWhenNonCritical}. Also, if $a_j = \infty$, for any $j$, then to satisfy
(\ref{eq:LinearSystemExpectedHittingTimes}) we must have $a_i = \infty$, for all $i$, which, as just shown, cannot
happen. Over the real numbers the system (\ref{eq:LinearSystemExpectedHittingTimes}) has a unique solution
given by (\ref{eq:Def_a0}) and (\ref{eq:Def_ai}). This is shown in Appendix \ref{subsec:ExpectedHittingTimes}. \\

\noindent
\emph{Case (2)}: General Case\\

Fix any initial environment $\omega$ such that the limiting right densities $d_i$ exist,
and let $s = 1/ (\sum_{i=0}^L d_i a_i)$. Also, for an arbitrary environment $\omega'$,
let $\tau$ denote the last hitting time of site $0$ (which is a.s. finite by Theorem
\ref{thm:RightLeftTransienceCutoff}).

We observe that:

\begin{enumerate}

\item For any $\omega'$,
\begin{align*}
\P_{\omega'}(\tau = 0) = \P_{\omega''}(\tau = 0) > 0,
\end{align*}
by Corollary \ref{cor:PrAnPlusGreaterEqualBeta}, where
$\omega''$ is the environment defined by
\begin{align}
\label{eq:DefOmegaDoublePrime}
\omega''(x) = \omega'(x) ~,~ x \geq 0 ~~\mbox{ and }~~\omega''(x) = (p,0) ~,~ x < 0.
\end{align}

\item For any environment $\omega'$ with $\P_{\omega}(\omega_{\tau} = \omega') > 0$, we have
\begin{align*}
\P_{\omega}(X_n/n \rightarrow s | \omega_{\tau} = \omega')
= \P_{\omega'}(X_n/n \rightarrow s| \tau = 0)
= \P_{\omega''}(X_n/n \rightarrow s| \tau = 0),
\end{align*}
where $\omega''$ is defined by (\ref{eq:DefOmegaDoublePrime}).

\item  For any environment $\omega'$ with $\P_{\omega}(\omega_{\tau} = \omega') > 0$,
$\omega'(x) = \omega(x)$, for all but finitely many $x$. So, the limiting right densities $d_i'$  of
states in each configuration for the environment $\omega'$ are the same as the limiting
right densities $d_i$ for the initial environment $\omega$.

\end{enumerate}
It follows from these three observations and the result for Case (1) that, for any
environment $\omega'$ with $\P_{\omega}(\omega_{\tau} = \omega') > 0$,
\begin{align*}
\P_{\omega}(X_n/n \rightarrow s | \omega_{\tau} = \omega')
= \P_{\omega''}(X_n/n \rightarrow s| \tau = 0)
= \P_{\omega''}(X_n/n \rightarrow s)
= 1.
\end{align*}
Hence, $X_n/n \rightarrow s$, $\P_{\omega}$ a.s.

\end{proof}

\section{The Critical Case}
\label{sec:CriticalCase}

Here we analyze the transience/recurrence properties of the random walk $(X_n)$ in the critical case
$\alpha = 1/2$, proving Theorems \ref{recurrpossible}--\ref{LR2crit}. We begin in section
\ref{subsec:TransienceRecurrenceMConN0} with an important lemma for transience/recurrence of
Markov chains on $\N_0$. Then, in section \ref{subsec:StepDistributionZxChain} we establish a framework
relating the right jumps Markov chain $(Z_x)$ to the setup of this lemma. Using this framework, Theorem
\ref{recurrpossible} is proved in section \ref{subsec:ProofThmRecurrPossible}, Theorem
\ref{L1andEnvironment} in section \ref{subsec:ProofThmL1andEnvironment} and Theorem \ref{LR2crit} in section
\ref{subsec:ProofThmLR2crit}. Theorem \ref{RorL1} is proved in section \ref{subsec:ProofThmRorL1}, using other methods.

\subsection{Transience and Recurrence for Markov Chains on $\N_0$}
\label{subsec:TransienceRecurrenceMConN0}

Let $(\ZM_x)_{x \geq 0}$ be a time-homogenous Markov chain 
on the state space $\N_0 = \{0,1,2,...\}$ with step distribution $U(n)$. That is,
\begin{align*}
\P(\ZM_{x+1} = m | \ZM_x = n) = \P( U(n) = m ) ~, ~~ n,m \geq 0.
\end{align*}
We will say that the chain $(\ZM_x)$ is \emph{irreducible and aperiodic with the exception of state $0$} if
$0$ is an absorbing state, but it is possible to redefine the transition probabilities from $0$ to make the chain irreducible and aperiodic. 
Also, we will say that the chain $(\ZM_x)$ is \emph{irreducible and aperiodic with the possible exception of state $0$} if it is either 
irreducible and aperiodic or irreducible and aperiodic with the exception of state $0$. Finally, we will say that the step distribution 
$U(n)$ is \emph{well concentrated} if
\begin{equation}\label{def:mu}
\mu \equiv \lim_{n \to \infty} \E(U(n))/n
\end{equation}
exists and there exist constants $C, c > 0$ and $N \in \N$ such that:
\begin{align}
\label{eq:UnEpsilonSquaredBound}
& \P\left(|U(n) - \mu n| > \epsilon n\right) \leq C e^{-c \epsilon^2 n} , \mbox{ for $0 < \epsilon \leq 1$ and  $n \geq N$.} \\
\label{eq:UnEpsilonBound}
& \P\left(|U(n) - \mu n| > \epsilon n\right) \leq C e^{-c \epsilon n} , \mbox{ for $\epsilon \geq 1$ and  $n \geq N$.}
\end{align}
In this case, we define also the quantities $\rho(n)$, $\nu(n)$, and $\theta(n)$ by
\begin{equation}\label{def:rhonutheta}
\rho(n) = \E(U(n) - \mu n),~ \  \nu(n) = \E((U(n) - \mu n )^2)/n,~ \ \theta(n) = 2 \rho(n)/\nu(n).
\end{equation}

The following lemma is essentially Theorem 1.3 from \cite{Kozma2013}\footnotemark{}.

\begin{Lem}
\label{lem:ThetaLessGreater1}
Let $(\ZM_x)_{x \geq 0}$ be a time-homogenous Markov chain on state space $\N_0$, 
which is irreducible and aperiodic with the possible exception of state $0$ and has well 
concentrated step distribution $U(n)$. Also, denote by $\P_k$ the probability measure for 
the chain $(\ZM_x)$ started from $\ZM_0 = k$. Then the following hold for any initial state $k \geq 1$.
\begin{itemize}
\item[(i)] If $\mu < 1$, then $\P_k(\ZM_x > 0, \forall x \geq 0) = 0$.
\item[(ii)] If $\mu > 1$, then $\P_k(\ZM_x > 0, \forall x \geq 0) > 0$.
\item[(iii)] If $\mu = 1$, $\liminf_{n \to \infty} \nu(n) > 0$, and $\theta(n) < 1 + \frac{1}{\ln(n)} - \frac{a(n)}{n^{1/2}}$
for sufficiently large $n$, for some function $a(n) \rightarrow \infty$, then $\P_k(\ZM_x > 0, \forall x \geq 0) = 0$.
\item[(iv)] If $\mu = 1$, $\liminf_{n \to \infty} \nu(n) > 0$, and $\theta(n) > 1 + \frac{2}{\ln(n)} + \frac{a(n)}{n^{1/2}}$
for sufficiently large $n$, for some function $a(n) \rightarrow \infty$, then $\P_k(\ZM_x > 0, \forall x \geq 0) > 0$.
\end{itemize}
\end{Lem}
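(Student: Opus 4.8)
The plan is to treat this as a Foster--Lyapunov / Lamperti-type statement --- essentially \cite[Theorem 1.3]{Kozma2013} --- and to reduce every case to a drift computation together with standard supermartingale and Borel--Cantelli arguments. The first step is bookkeeping: integrating the tails in \eqref{eq:UnEpsilonSquaredBound}--\eqref{eq:UnEpsilonBound} shows that $|\rho(n)| \le K\sqrt n$, $\nu(n) \le K$, and $\E\big(|U(n)-\mu n|^j\big) \le K_j\, n^{j/2}$ for each fixed $j$ and suitable constants; combined with the hypothesis $\liminf\nu(n)>0$ in (iii)--(iv), this traps $\nu(n)$ between two positive constants for large $n$. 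I would also record two routine reductions used throughout: it suffices to treat the case in which $0$ is absorbing, and for any fixed level $n_0$ the irreducibility hypothesis gives $\P_k\big(\,\ZM_x \ge n_0 \text{ for some } x \text{, with } \ZM_y>0 \text{ for } y<x\,\big)>0$ for every $k\ge1$ (apply irreducibility to the chain with the transitions from $0$ redefined; the redefinition is invisible before the first visit to $0$).

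Parts (i) and (ii) are then immediate. For (i), $\E(\ZM_{x+1}-\ZM_x\mid\ZM_x=n)=(\mu-1)n+\rho(n)$, so the bound on $\rho$ gives drift $\le -\delta n \le -\delta$ for all $n$ outside a finite set once $\mu<1$; Foster's drift criterion, applied to the chain with the transitions from $0$ redefined to make it irreducible, shows that chain is positive recurrent, hence hits $0$ a.s., and since the law before the first visit to $0$ is unaffected by the redefinition, $\P_k(\ZM_x>0\ \forall x)=0$. For (ii), fix $1<\mu'<\mu$ with $\mu-\mu'\le1$; then \eqref{eq:UnEpsilonSquaredBound} gives $\P(\ZM_{x+1}<\mu'\ZM_x\mid\ZM_x=n)\le C e^{-c(\mu-\mu')^2 n}$, so from a large level $n_0$ the event $\{\ZM_x\ge\mu'^x n_0\ \forall x\}$ has probability at least $\prod_{x\ge0}\big(1-Ce^{-c(\mu-\mu')^2\mu'^x n_0}\big)>0$; on it $\ZM_x\to\infty$, in particular $\ZM_x>0$ for all $x$, and the passage-to-$n_0$ reduction upgrades this to arbitrary $k\ge1$.

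The substance is in (iii) and (iv), where $\mu=1$ and the drift of $\ZM_x$ itself is too small to exploit directly; the idea is to pass to $\ln\ZM_x$. Taylor expanding $\ln U(n)$ around $\ln n$ with $\Delta:=U(n)-n$, and using $\E\Delta=\rho(n)$, $\E\Delta^2=\nu(n)n+\rho(n)^2$, $\E|\Delta|^3=O(n^{3/2})$, gives
\[
\E\big(\ln U(n)-\ln n\,\big)=\frac{\rho(n)}{n}-\frac{\nu(n)}{2n}+O\!\big(n^{-3/2}\big)=\frac{\nu(n)}{2n}\big(\theta(n)-1\big)+O\!\big(n^{-3/2}\big).
\]
For (iii) I would use the Lyapunov function $V(n)=\ln\ln n$ (for $n\ge n_1$, constant below); one further expansion, using $\Var(\ln U(n))=\nu(n)/n+O(n^{-3/2})$, yields
\[
\E\big(V(U(n))-V(n)\mid\ZM_x=n\big)=\frac{\nu(n)}{2n\ln n}\Big(\theta(n)-1-\tfrac{1}{\ln n}\Big)+O\!\Big(\tfrac{1}{n^{3/2}\ln n}\Big),
\]
which by the hypothesis $\theta(n)<1+\frac1{\ln n}-\frac{a(n)}{n^{1/2}}$ with $a(n)\to\infty$ (and $\nu(n)$ bounded below) is $\le0$ for all large $n$; thus $V(\ZM_{x\wedge\sigma})$ is a nonnegative supermartingale, $\sigma$ the entrance time into $\{0,\dots,N_0\}$, hence converges a.s., and since $V(n)\to\infty$ this forces $\sigma<\infty$ a.s. from every starting state, after which a conditional Borel--Cantelli argument inside the finite set $\{0,\dots,N_0\}$ gives $\P_k(\ZM_x>0\ \forall x)=0$. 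For (iv) I would instead take $V(n)=1/\ln n$; the analogous expansion gives
\[
\E\big(V(U(n))-V(n)\mid\ZM_x=n\big)=-\frac{\nu(n)}{2n(\ln n)^2}\Big(\theta(n)-1-\tfrac{2}{\ln n}\Big)+O\!\Big(\tfrac{1}{n^{3/2}(\ln n)^2}\Big),
\]
which by $\theta(n)>1+\frac2{\ln n}+\frac{a(n)}{n^{1/2}}$ is $\le0$ for large $n$; now $V(\ZM_{x\wedge\sigma})$ is a \emph{bounded} supermartingale, so optional stopping from a large level $n_0$ gives $\P_{n_0}(\sigma<\infty)\le V(n_0)/\min_{m\le N_0}V(m)<1$, whence $\P_{n_0}(\ZM_x>0\ \forall x)\ge\P_{n_0}(\sigma=\infty)>0$, and the passage-to-$n_0$ reduction finishes it.

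The main obstacle is making the two Taylor expansions in (iii)--(iv) rigorous: one must check that the cubic-and-higher terms, the cross terms, and the contribution of the large-deviation event $\{|U(n)-n|\ge n/2\}$ are all $o\big(\nu(n)\,a(n)\,n^{-3/2}(\ln n)^{-1}\big)$ in case (iii), and $o\big(\nu(n)\,a(n)\,n^{-3/2}(\ln n)^{-2}\big)$ in case (iv), which is exactly the purpose of the quantitative bounds \eqref{eq:UnEpsilonSquaredBound}--\eqref{eq:UnEpsilonBound} and of the divergence $a(n)\to\infty$. The asymmetry between the thresholds $1+\frac1{\ln n}$ in (iii) and $1+\frac2{\ln n}$ in (iv) reflects the opposite sign with which the second-order self-interaction term enters for the comparison functions $\ln\ln n$ and $1/\ln n$, and the gap between them is precisely the borderline that these two elementary choices of $V$ cannot resolve.
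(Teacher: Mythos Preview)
The paper does not actually prove this lemma; it cites it as essentially Theorem~1.3 of \cite{Kozma2013}, with a footnote explaining why the minor differences in hypotheses (the possible exception of state $0$, the split concentration condition \eqref{eq:UnEpsilonSquaredBound}--\eqref{eq:UnEpsilonBound} in place of a single sub-Gaussian bound, and the added assumption $\liminf_{n\to\infty}\nu(n)>0$ in (iii)--(iv)) do not affect the argument given there.

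Your sketch is a faithful outline of the Lamperti--Foster proof that underlies that cited result: the drift arguments for (i) and (ii) are standard, and for (iii)--(iv) the choice of Lyapunov functions $V(n)=\ln\ln n$ and $V(n)=1/\ln n$, together with the two-step Taylor expansion of $\E[V(U(n))-V(n)]$, is precisely the mechanism that produces the thresholds $1+\tfrac{1}{\ln n}$ and $1+\tfrac{2}{\ln n}$. One small remark: in the expansion you use $|\rho(n)|\le K\sqrt n$ to control error terms, but in cases (iii)--(iv) the hypothesis on $\theta(n)$ together with the boundedness of $\nu(n)$ actually forces $\rho(n)$ to be bounded, which is what makes the $\rho(n)^2/n^2$ cross term genuinely $O(n^{-2})$ rather than merely $O(n^{-1})$; you may want to state this explicitly. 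Otherwise there is nothing to correct --- you have supplied a proof where the paper only quotes one.
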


\footnotetext{
There are three small differences. First, in Theorem 1.3 of \cite{Kozma2013} the chain $(\ZM_x)$ is required to be 
truly irreducible and aperiodic, without the possible exception of state $0$. Second, instead of \eqref{eq:UnEpsilonSquaredBound}
and \eqref{eq:UnEpsilonBound} the following somewhat stronger concentration condition for $U(n)$ is assumed:
There exist $c > 0$ and $N \in \N$ such that
\begin{align}
\label{eq:UnGeneralEpsilonSquareBound}
\P\left(|U(n) - \mu n| > \epsilon n\right) \leq 2 e^{-c \epsilon^2 n}, \mbox{ for all $\epsilon > 0$ and $n \geq N$}.
\end{align}
Finally, there is no assumption that $\liminf_{n \to \infty} \nu(n) > 0$ for cases (iii) and (iv).

Allowing the possible exception of state $0$ in the irreducible and aperiodic hypothesis clearly has no effect,
since the probability of ever hitting state $0$, starting from a state $k \geq 1$, depends only on the transition probabilities from 
the nonzero states.  Also, the concentration condition (\ref{eq:UnGeneralEpsilonSquareBound}) is used in \cite{Kozma2013} only 
to bound the error terms in certain Taylor series expansions, and these estimates remain valid if (\ref{eq:UnEpsilonSquaredBound})
and (\ref{eq:UnEpsilonBound}) hold instead, so there is no issue with using the weaker concentration condition.
However, the proof of cases (iii) and (iv) given in \cite{Kozma2013} actually works as stated only if
$\liminf_{n \to \infty} \nu(n) > 0$, so we require this condition also in our statement. }

\subsection{Step Distribution of the Right Jumps Markov chain}
\label{subsec:StepDistributionZxChain}

By definition (\ref{eq:DefRightJumpsMC}) for the right jumps Markov chain $(Z_x)_{x \geq 0}$,
\begin{align*}
\P(Z_{x+1} = m |Z_x = n) = \P(U(n,x+1) = m) ~,~n,m,x \geq 0,
\end{align*}
where $U(n,x)$ is the (random) number of right jumps in the sequence $(J_k^x)_{k \in \N}$ before the
time of the $n$-th left jump:
\begin{align}
\label{eq:UofnDistribution}
U(n,x)=\inf \Big\{\ell \geq 0: \sum_{k=1}^{\ell} \indicator\{J_k^x = -1\} =n\Big\}-n.
\end{align}
If the initial environment $\omega(x)$ is constant for all $x \geq 0$, then the distribution of the jump sequence
$(J_k^x)_{k \in \N}$ is the same for all $x \geq 0$, so the distribution of $U(n,x)$ is also the same for all $x \geq 0$.
In this case, the right jumps chain $(Z_x)_{x \geq 0}$ is time-homogeneous (where $x$ is the time variable) with
step distribution $U(n) = U(n,x)$. It is also irreducible and aperiodic with the exception of state $0$. For example,
redefining the transition probabilities from state $0$ as $\P(Z_{x+1} = 1|Z_x = 0) = 1$
would make the chain irreducible and aperiodic. 

For the remainder of this section we assume $\omega(x) = \omega(0)$, for all $x \geq 0$.
For our analysis of the step distribution $U(n)$ we fix an arbitrary site $x \geq 0$ and 
decompose $U(n) = U(n,x)$ as
\begin{align}
\label{eq:UnEqualSumGammaj}
U(n)=\sum_{j=1}^n\Gamma_j,
\end{align}
where $\Gamma_j$ is the number of right jumps in the sequence $(J_k^x)_{k \in \N}$
between the $(j-1)$-th and $j$-th left jumps. That is, $\Gamma_j = k_ j - k_{j-1} - 1$,
where $k_0 = 0$ and, for $j \geq 1$, $k_j = \inf \{k > k_{j-1}: J_{k}^x = -1 \}$.

We think of the $(\Gamma_j)_{j=1}^n$ as the values obtained in $n$ ``sessions,'' and
denote by $\omega^j = \omega^j(x)$ the configuration at site $x$ at the beginning of the $j$-th 
session. Thus, $\omega^1 = \omega(x)$ and, for $j \geq 2$, $\omega^j = Y_{k_{j-1}+1}^x$ 
is the configuration at site $x$ immediately after the $(j-1)$-th left jump in the sequence 
$(J_k^x)_{k \in \N}$. It is straightforward to see that conditioned on $\omega^j$, $\Gamma_j$ 
is independent of $\Gamma_1,...,\Gamma_{j-1}$ and has the following distribution:
\begin{align}
\label{eq:GammajDist}
\Gamma_j & \sim S_i, \mbox{ if } \omega^j = (q,i) , \mbox{ for some } 0 \leq i \leq R-1, \mbox{ and } \nonumber \\
\Gamma_j & \sim S_R, \mbox{ if } \omega^j = (p,i), \mbox{ for some } 0 \leq i \leq L-1,
\end{align}
where $S_0, ... ,S_R$ are random variables with law
\begin{equation}
\label{Sidist}
\P(S_i=k)=\begin{cases} q^k(1-q),\ 0 \leq k \leq R-i-1;\\
q^{R-i}p^{k-(R-i)}(1-p),\ k\ge R-i.\end{cases}
\end{equation}
In particular, $S_R$ is a standard geometric random variable with parameter $1-p$.

Now, the configuration $\omega^{j+1}$ at the beginning of the next session is determined entirely by $\omega^j$
and $\Gamma_j$. More precisely, $\omega^{j+1}$ is the (deterministic) configuration obtained by jumping right
$\Gamma_j$ times from site $x$, starting in configuration $\omega^j$, and then jumping left once.
Thus, assuming that $L\ge2$:
\begin{align}
\label{eq:omegajplus1_determined}
& \mbox{ If } \omega^j = (q,i), 0 \leq i \leq R-1, \mbox{ then }
\omega^{j+1} = \begin{cases} (p,1) ~, \mbox{ if } \Gamma_j \geq R-i; \nonumber\\
(q,0)~, \mbox{ if } \Gamma_j < R-i.\end{cases} \\
& \mbox{ If } \omega^j = (p,i), 0 \leq i \leq L-2, \mbox{ then }
\omega^{j+1} = \begin{cases} (p,1) ~, \mbox{ if } \Gamma_j \geq 1; \nonumber\\
(p,i+1)~, \mbox{ if } \Gamma_j = 0.\end{cases} \\
& \mbox{ If } \omega^j = (p,L-1), \mbox{ then }
\omega^{j+1} = \begin{cases} (p,1) ~, \mbox{ if } \Gamma_j \geq 1;\\
(q,0)~, \mbox{ if } \Gamma_j = 0.\end{cases}
\end{align}
If $L=1$ then the configuration $\omega^j$ at the beginning of each of the right jumps sessions 
after the first is always $(q,0)$, since the configuration at site $x$ immediately after a left jump is $(q,0)$.

From \eqref{eq:GammajDist}-\eqref{eq:omegajplus1_determined} it follows that, for any $L \geq 2$, the sequence of
configurations $(\omega^j)_{j=1}^n$ is a Markov chain with (initial state $\omega(x)$ and)
transition matrix $\Ah$ given by
\begin{equation}\label{Ahatmatrix}
\begin{aligned}
& \Ah_{(q,i),(q,0)} =1-q^{R-i} ~,~ \Ah_{(q,i),(p,1)}=q^{R-i}~\mbox{ for }~ 0 \leq i \leq R-1; \\
& \Ah_{(p,i),(p,1)} =p ~,~ \Ah_{(p,i),(p,i+1)}=1-p ~\mbox{ for }~ 0 \leq i \leq L-2; \\
& \Ah_{(p,L-1),(p,1)} =p ~,~ \Ah_{(p,L-1),(q,0)}=1-p.
\end{aligned}
\end{equation}
In the case $L=1$, $(\omega^j)_{j=1}^n$ is still a Markov chain, but it is degenerate.
The transition matrix $\Ah$ has $\Ah_{\lambda,(q,0)} = 1$, for all $\lambda \in \Lambda$.

In either case, the transition matrix $\Ah$ is indecomposable, and the $L$ states $\Lambda_0 \equiv \{(q,0),(p,1),...,(p,L-1)\}$
constitute a closed, irreducible set of states. We denote by $A$ the corresponding transition matrix
obtained from $\Ah$ by restricting to these $L$ states, and by $\psi$ the unique invariant measure for $A$
(for $L=1$, $A = \psi = 1$). Also, we denote by $e_{(p,i)}$ the unit $L$-vector with a $1$ in the position of state
$(p,i)$, and by $e_{(q,0)}$ the unit $L$-vector with a $1$ in the position of $(q,0)$. Finally, we let $E$
denote the $L$-vector with components, $E_{(q,0)}=\E(S_0)$ and $E_{(p,i)}=\E(S_R)$, for $i\in\{1,\ldots, L-1\}$. \\

\noindent
\emph{Basic Lemmas} \vspace{1.5 mm} \\
The following lemmas characterize some key properties of the step distribution $U(n) = \sum_{j=1}^n \Gamma_j$ 
in the critical case, $\alpha = 1/2$. Proofs are deferred to Appendix \ref{sec:ProofOfUnLemmas},
but in all three cases use the underlying Markov chain $(\omega^j)_{j=1}^n$.

\begin{Lem}
\label{lem:muEqual1}
If $\alpha = 1/2$, then
\begin{align}
\label{eq:muEqual1}
\mu \equiv \lim_{n \to \infty} \frac{\E(U(n))}{n} = \left<\psi, E \right> = 1,
\end{align}
where $\left<\cdot, \cdot \right>$ denotes the standard inner product of two real vectors. 
\end{Lem}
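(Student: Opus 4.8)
The plan is to prove the two equalities in \eqref{eq:muEqual1} in turn: first that the limit $\mu$ exists and equals $\langle\psi,E\rangle$, and then that this common value is $1$ precisely because $\alpha=1/2$.

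For the identity $\mu=\langle\psi,E\rangle$ I would argue from the session decomposition $U(n)=\sum_{j=1}^n\Gamma_j$ of \eqref{eq:UnEqualSumGammaj}. For every $j\ge 2$ the configuration $\omega^j$ lies in the closed, irreducible class $\Lambda_0=\{(q,0),(p,1),\ldots,(p,L-1)\}$, which is also aperiodic: for $L\ge 2$ because $\Ah_{(p,1),(p,1)}=p>0$ is a self-loop, and for $L=1$ trivially, since then $\omega^j\equiv(q,0)$. Hence the law of $\omega^j$ converges to $\psi$ as $j\to\infty$. By \eqref{eq:GammajDist} we have $\E(\Gamma_j\mid\omega^j=\lambda)=E_\lambda$, and every $E_\lambda$ is finite since, by \eqref{Sidist}, $\P(S_i\ge k)\le\max\{p,q\}^k$ for all $i$ and all $k\ge 0$. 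Consequently $\E(\Gamma_j)=\sum_{\lambda\in\Lambda_0}\P(\omega^j=\lambda)\,E_\lambda\to\langle\psi,E\rangle$ as $j\to\infty$, while $\E(\Gamma_1)$ is a fixed finite constant. Taking Ces\`aro averages gives $\E(U(n))/n=\frac1n\sum_{j=1}^n\E(\Gamma_j)\to\langle\psi,E\rangle$, which proves both that $\mu$ exists and that it equals $\langle\psi,E\rangle$.

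To see that $\mu=1$, I would count jumps in the sequence $(J_k^x)_{k\in\N}$ directly. By \eqref{eq:UofnDistribution} the $n$-th left jump occurs at position $n+U(n)$ of the sequence, so among the first $n+U(n)$ jumps there are exactly $n$ left jumps and $U(n)$ right jumps. As noted immediately after \eqref{eq:ExpectedValuePhiEqualAlpha}, the ergodic theorem for the extended single site chain gives that the limiting fraction of right jumps in $(J_k^x)$ equals $\alpha$, $\P_\omega$ a.s. Since $n+U(n)\to\infty$, this forces $U(n)/(n+U(n))\to\alpha$ a.s., and solving this relation gives $U(n)/n\to\alpha/(1-\alpha)$ a.s., which is $1$ when $\alpha=1/2$. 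To pass from this to the expectations I would note that $\{U(n)/n\}_n$ is uniformly integrable: using $\P(S_i\ge k)\le\max\{p,q\}^k$ one can couple $\Gamma_1,\ldots,\Gamma_n$ with i.i.d.\ copies $G_1,\ldots,G_n$ of a variable $G$ having $\P(G\ge k)=\max\{p,q\}^k$ so that $\Gamma_j\le G_j$ for each $j$; then $U(n)\le\sum_{j=1}^nG_j$, whence $\sup_n\E\big((U(n)/n)^2\big)<\infty$. Therefore $\E(U(n))/n\to 1$, and comparing with the previous paragraph yields $\langle\psi,E\rangle=\mu=1$.

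The step I expect to be the crux is the identity $\mu=1$: it is not transparent from the explicit forms of $\psi$ and $E$, and the clean route is the renewal-type observation that each session contributes exactly one left jump against $\Gamma_j$ right jumps, together with the previously recorded fact that the asymptotic fraction of right jumps in $(J_k^x)$ is $\alpha$. The supporting points --- the geometric tail bound for $S_i$ coming from \eqref{Sidist} and the uniform integrability it yields --- are routine.
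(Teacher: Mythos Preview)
Your argument is correct and follows essentially the same route as the paper: both identify $\mu=\langle\psi,E\rangle$ via the convergence $\E(\Gamma_j)\to\langle\psi,E\rangle$, and both obtain $\langle\psi,E\rangle=\alpha/(1-\alpha)$ by recognizing that among the first $n+U(n)$ terms of $(J_k^x)$ there are exactly $U(n)$ right jumps and $n$ left jumps, and then invoking the previously established fact that the limiting fraction of right jumps is $\alpha$. The only organizational difference is that the paper also records the almost-sure convergence $\tfrac{1}{n}\sum_{j=1}^n\Gamma_j\to\langle\psi,E\rangle$ (via the ergodic theorem for the chain $(\omega^j)$), so that the two a.s.\ limits can be compared directly; this lets the paper skip your uniform-integrability step entirely. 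Your UI argument is fine, but you could equally well bypass it by noting that the same almost-sure limit $U(n)/n\to\langle\psi,E\rangle$ already follows from the Markov-chain ergodic theorem.
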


\begin{Lem}
\label{lem:ConcentrationEstimate}
If $\alpha = 1/2$, then the step distribution $U(n)$ is well concentrated.
\end{Lem}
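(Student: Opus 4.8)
The plan is to work from the representation $U(n)=\sum_{j=1}^n\Gamma_j$ in \eqref{eq:UnEqualSumGammaj}, exploiting that, conditionally on $\omega^j$, the increment $\Gamma_j$ is independent of the past (and of $\omega^1,\dots,\omega^j$) while $\omega^{j+1}$ is the deterministic function of $(\omega^j,\Gamma_j)$ given in \eqref{eq:omegajplus1_determined}. Since Lemma \ref{lem:muEqual1} already supplies the existence of $\mu=\lim_n\E(U(n))/n$ (equal to $1$), only the two concentration bounds \eqref{eq:UnEpsilonSquaredBound}--\eqref{eq:UnEpsilonBound} remain, and they must be obtained with constants uniform in the initial configuration $\omega(x)$. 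The first ingredient is an elementary tail estimate: from \eqref{Sidist}, with $r=\max(p,q)<1$, one computes $\P(S_i\ge k)\le r^k$ for every $i$ and every $k\ge 0$, so each $\Gamma_j$ is stochastically dominated, uniformly in $\omega^j$, by a geometric random variable $G$ with parameter $1-r$; in particular $\varphi_\lambda(t)\equiv\E(e^{t\Gamma_j}\mid\omega^j=\lambda)\le(1-r)/(1-re^t)<\infty$ for all $t<\ln(1/r)$, with a bound independent of $\lambda$.

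The main step is a moment generating function estimate via the Markov structure. For $t$ in a neighborhood of $0$ define the nonnegative matrix $B(t)$ on the closed irreducible class $\Lambda_0\equiv\{(q,0),(p,1),\dots,(p,L-1)\}$ by $B(t)_{\lambda\lambda'}=\E\!\big(e^{t\Gamma}\indicator\{\omega^{j+1}=\lambda'\}\mid\omega^j=\lambda\big)$, so that $B(0)$ is the transition matrix $A$. Iterating the one-step identity yields
\[
\E\!\big(e^{tU(n)}\big)=\langle v,\,B(t)^{\,n}\mathbf 1\rangle,
\]
where $v$ is a fixed nonnegative vector supported on $\Lambda_0$ with $\|v\|_1\le\max_\lambda\varphi_\lambda(t)<\infty$ (it accounts for the single session needed to enter $\Lambda_0$ from the arbitrary initial configuration $\omega(x)$, using that the chain $(\omega^j)$ with transition matrix $\Ah$ of \eqref{Ahatmatrix} enters $\Lambda_0$ in one step). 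Each entry of $B(t)$ is a convergent power series in $e^t$, so $B(t)$ is analytic near $t=0$, nonnegative, and irreducible for $t$ small (its zero pattern matches that of $A$); hence by Perron--Frobenius together with analytic perturbation theory it has a simple leading eigenvalue $\kappa(t)$, analytic with $\kappa(0)=1$ and $\kappa'(0)=\langle\psi,E\rangle=1$ (the value identified in Lemma \ref{lem:muEqual1}). Therefore $\kappa(t)\le e^{\,t+C_0 t^2}$ for $|t|\le t_0$, with $C_0,t_0>0$ depending only on $p,q,R,L$, and consequently $\E\!\big(e^{t(U(n)-n)}\big)\le C_1 e^{C_0 n t^2}$ for all $|t|\le t_0$ and $n\ge 1$, the constant $C_1$ absorbing the projection onto the leading eigenspace and the subdominant modes. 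A Chernoff bound applied with $t=\epsilon/(2C_0)$ when $\epsilon\le 2C_0 t_0$ and with $t=t_0$ otherwise (and symmetrically for the lower tail) then gives $\P(|U(n)-\mu n|>\epsilon n)\le 2C_1 e^{-c\epsilon^2 n}$ for $0<\epsilon\le 1$ and $\le 2C_1 e^{-c\epsilon n}$ for $\epsilon\ge 1$, which is precisely well-concentratedness. An essentially equivalent route is to decompose the finite irreducible chain $(\omega^j)$ into i.i.d.\ blocks at successive returns to a fixed reference state of $\Lambda_0$: the block length and the block sum $\sum\Gamma_j$ each then have a finite exponential moment near $0$ — the latter because it is dominated by a geometric sum of i.i.d.\ copies of $G$ — and a sub-exponential Bernstein inequality for i.i.d.\ sums produces the same two bounds; when $L=1$ the chain $(\omega^j)$ is constant at $(q,0)$ after the first session, so $U(n)$ is, up to one $O(1)$ term, a genuine i.i.d.\ sum of copies of $S_0$ and the Bernstein inequality applies directly.

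I expect the obstacles to be bookkeeping rather than conceptual: first, making the two-regime dichotomy clean, i.e.\ tracking the threshold $\epsilon\asymp 1$ at which the Chernoff optimization (or the Bernstein rate function) switches from the quadratic exponent to the linear one; and second, verifying that the constants $C,c,N$ in \eqref{eq:UnEpsilonSquaredBound}--\eqref{eq:UnEpsilonBound} can be taken uniform over the initial configuration $\omega(x)$ — which is automatic above because $(\omega^j)$ enters $\Lambda_0$ after a single session and every estimate ($\varphi_\lambda$, $\kappa(t)$, the block-tail bounds) is uniform over the states of $\Lambda_0$. In the regeneration version, the additional mild nuisance is controlling the random number of completed blocks contained in the first $n$ sessions and the incomplete terminal block, both of which are handled by their own exponential tail bounds.
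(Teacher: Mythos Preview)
Your proposal is correct and takes a genuinely different route from the paper's proof. The paper argues by a direct ``bare hands'' decomposition: it controls, via standard finite-state Markov chain large deviations (its Fact~2), the empirical frequency $\psi_n(\lambda)$ of visits to each state $\lambda\in\Lambda_0$, and separately, via i.i.d.\ large deviations (its Fact~1), the empirical average of the $\Gamma_j$'s restricted to visits at each fixed $\lambda$; on the intersection of these good events it writes $\Gammab_n-1$ as a weighted sum of these small deviations and reads off the $\epsilon^2 n$ exponent. The large-$\epsilon$ bound is handled as you do, by domination with a geometric. Your approach instead packages everything into the tilted transition matrix $B(t)$ and invokes Perron--Frobenius plus analytic perturbation to get $\kappa(t)\le e^{t+C_0 t^2}$, then Chernoff; this is the standard Markov-additive large deviations machinery and is cleaner, yielding the uniform constants and the $\epsilon^2/\epsilon$ dichotomy in one stroke, at the cost of appealing to perturbation theory for simple eigenvalues. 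The paper's argument is more elementary (no spectral theory) but requires more explicit bookkeeping of the ``good event'' and the two-regime split. Your regeneration alternative is also valid and is essentially a third equivalent packaging of the same estimate.
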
 

\begin{Lem}
\label{lem:nuLowerBound} 
If $\alpha = 1/2$, then $\liminf_{n \to \infty} \nu(n) > 0$, where $\nu(n)$ is given by \eqref{def:rhonutheta}.  
\end{Lem}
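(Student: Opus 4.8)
\emph{Plan of proof.} By Lemma~\ref{lem:muEqual1} we have $\mu=1$, so $\nu(n)=\E\big((U(n)-n)^2\big)/n\ge \Var\big(U(n)\big)/n$; it therefore suffices to produce a constant $c>0$ with $\Var(U(n))\ge cn$ for all large $n$. The strategy is to condition on the entire configuration chain $\mathcal G\equiv\sigma(\omega^j:j\ge1)$ and invoke the law of total variance, $\Var(U(n))\ge\E\big[\Var(U(n)\mid\mathcal G)\big]$, so that the claim reduces to showing that $\E\big[\Var(U(n)\mid\mathcal G)\big]$ grows at least linearly in $n$.

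The first step is to record the conditional independence structure. From \eqref{eq:GammajDist} together with the fact that $\omega^{j+1}$ is a \emph{deterministic} function $\Phi(\omega^j,\Gamma_j)$ of $\omega^j$ and $\Gamma_j$ (equations \eqref{eq:omegajplus1_determined}), the joint law of $\big((\omega^j)_{j=1}^{n+1},(\Gamma_j)_{j=1}^{n}\big)$ factorizes across $j$, so that conditioned on $\sigma(\omega^1,\dots,\omega^{n+1})$ the variables $\Gamma_1,\dots,\Gamma_n$ are independent, with $\Gamma_j$ distributed as $\Gamma_j\mid\omega^j$ (see \eqref{eq:GammajDist}) conditioned further on the event $\{\Phi(\omega^j,\cdot)=\omega^{j+1}\}$. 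In particular $\Var(\Gamma_j\mid\mathcal G)=\Var(\Gamma_j\mid\omega^j,\omega^{j+1})$, and hence
\[
\Var(U(n))\ \ge\ \E\big[\Var(U(n)\mid\mathcal G)\big]\ =\ \sum_{j=1}^n \E\!\left[\Var\big(\Gamma_j\mid\omega^j,\omega^{j+1}\big)\right].
\]

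Next I would isolate a single ``good'' transition that always contributes positive conditional variance. For $L\ge2$ take $\lambda_*=(q,0)$ and $\lambda_*'=(p,1)$: by \eqref{eq:omegajplus1_determined} with $i=0$, conditioned on $\{\omega^j=(q,0),\ \omega^{j+1}=(p,1)\}$ one has $\Gamma_j\sim[S_0\mid S_0\ge R]$, which is $R$ plus a geometric$(1-p)$ variable, so its variance equals $\sigma_*^2:=p/(1-p)^2>0$. For $L=1$ the chain $(\omega^j)_{j\ge2}$ is identically $(q,0)$ and any value of $\Gamma_j$ keeps it there, so take $\lambda_*=\lambda_*'=(q,0)$ and use $\Var\big(\Gamma_j\mid\omega^j=(q,0),\omega^{j+1}=(q,0)\big)=\Var(S_0)>0$ (note $R\ge2$ here, since $R=L=1$ is never critical). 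In either case $\Var(U(n))\ge\sigma_*^2\sum_{j=1}^n\P(\omega^j=\lambda_*,\ \omega^{j+1}=\lambda_*')$. Whatever the value of $\omega^1=\omega(x)$, the chain $(\omega^j)$ enters the closed, irreducible, aperiodic class $\Lambda_0$ within one step and then converges to its invariant distribution $\psi$, with $\psi_{\lambda_*}>0$; since $A_{\lambda_*\lambda_*'}>0$, we get $\P(\omega^j=\lambda_*,\ \omega^{j+1}=\lambda_*')\to\psi_{\lambda_*}A_{\lambda_*\lambda_*'}>0$, and a Cesàro average then yields $\liminf_n\Var(U(n))/n\ge\sigma_*^2\,\psi_{\lambda_*}A_{\lambda_*\lambda_*'}>0$, hence $\liminf_n\nu(n)>0$.

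The main obstacle is the first step: justifying carefully that conditioning on the full configuration chain leaves the $\Gamma_j$ independent with conditional law depending only on the pair $(\omega^j,\omega^{j+1})$. (A more direct martingale decomposition $U(n)-n=M_n+\sum_j\big(\E(\Gamma_j\mid\omega^j)-1\big)$ produces a cross term $\Cov\!\big(M_n,\sum_j(\E(\Gamma_j\mid\omega^j)-1)\big)$ which is not obviously negligible and would need mixing estimates; the conditioning route avoids it entirely.) The remaining ingredients — positivity of a single conditional variance and convergence of the finite chain $(\omega^j)$ to stationarity — are routine.
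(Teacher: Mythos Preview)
Your proposal is correct and follows essentially the same route as the paper's proof: condition on the configuration chain $(\omega^j)$, use conditional independence of the $\Gamma_j$'s given this chain, and pick out a recurrent transition with strictly positive conditional variance (the paper happens to use $(p,1)\to(p,1)$ rather than your $(q,0)\to(p,1)$, but this is immaterial). One small slip: $R=L=1$ \emph{is} critical when $p=1-q$ (see Corollary~\ref{RandL1}), but this does not affect your argument since $\Var(S_0)>0$ for all $R\ge1$ anyway.
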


\noindent \bf Remark.\rm\ 
The assumption $\alpha = 1/2$ is actually not necessary for the conclusions of Lemmas \ref{lem:ConcentrationEstimate} 
and \ref{lem:nuLowerBound} to hold, but it simplifies the writing of the proofs slightly and is the only case where we will apply them. \\

\noindent
\emph{Setup for the Proofs of Theorems \ref{recurrpossible}, \ref{L1andEnvironment}, and \ref{LR2crit}} \vspace{1.5 mm} \\
For the proofs of Theorems \ref{recurrpossible}, \ref{L1andEnvironment}, and \ref{LR2crit} below we will adopt the 
framework given here for analyzing the step distribution $U(n)$ of the right jumps Markov chain, without further mention,
whenever the initial environment $\omega$ is constant over all $x \geq 0$. In this case, since $\alpha = 1/2$ for all three 
of these theorems, we know $\mu = 1$ by Lemma \ref{lem:muEqual1}. Thus, \eqref{def:rhonutheta} becomes 
\begin{equation}\label{def:rhonuthetaalphahalf}
\rho(n) = \E(U(n) - n),~ \  \nu(n) = \E((U(n) - n)^2)/n,~ \ \theta(n) = 2 \rho(n)/\nu(n)
\end{equation}
and it follows from Lemmas \ref{lem:ProbSurvivalZxProbTransienceXn}, \ref{lem:ThetaLessGreater1}, 
\ref{lem:ConcentrationEstimate}, and \ref{lem:nuLowerBound} that
\begin{align}
\label{eq:ThetaTransienceRecurrenceCondition}
& \P_{\omega}(X_n \rightarrow \infty) = 0 , ~\mbox{ if }~ \theta(n) \leq 1 + O(\frac1n) ~\mbox{ and } \nonumber \\
& \P_{\omega}(X_n \rightarrow \infty) > 0 , ~\mbox{ if } \lim_{n \to \infty} \theta(n) > 1.
\end{align}
In all cases one of these two possibilities for $\theta(n)$ will occur. 

\subsection{Proof of Theorem \ref{recurrpossible}}
\label{subsec:ProofThmRecurrPossible}

\begin{proof}[Proof of Theorem \ref{recurrpossible}]
If the initial environment $\omega$ satisfies $\omega(x) = \lambda \in \Lambda_0$ for all $x \geq 0$, 
then the Markov chain representation of section \ref{subsec:StepDistributionZxChain} 
and Lemma \ref{lem:muEqual1} give 
\begin{align*}
\E&(\Gamma_j)
= \sum_{\lambda'} \P(\omega^j = \lambda'|\omega^1 = \lambda) \cdot \E(\Gamma_j|\omega^j = \lambda') \\
& = \left<e_{\lambda} A^{j-1}, E \right> = \left<\psi, E \right> + \left<(e_{\lambda} A^{j-1} - \psi), E \right> = 1 + O(a^j),
\end{align*}
for some $0 < a < 1$, which depends on the matrix $A$. Thus, in this case, for all $n \geq j$ 
we have from \eqref{def:rhonuthetaalphahalf}
\begin{align*}
\rho(n) = \sum_{i = 1}^n \E(\Gamma_i) - n 
= \bigg(\sum_{i = 1}^{j} \E(\Gamma_i) - j \bigg) + O(a^j) = \rho(j) + O(a^j). 
\end{align*} 
Also, by Lemma \ref{lem:nuLowerBound}, we know that there exists some $\epsilon > 0$, which can be chosen uniformly over 
$\lambda \in \Lambda_0$, such that $\liminf_{n \to \infty} \nu(n) \geq \epsilon$ if $\omega(x) = \lambda$, for $x \geq 0$. 
Combining these observations we see that there exists some $n_0 \in \N$ satisfying
\begin{align}
\label{eq:WhenBiggern0}
\nu^{(\lambda)}(n) \geq \epsilon/2 ~\mbox{ and }~ \rho^{(\lambda)}(n) - \rho^{(\lambda)}(n_0) \leq \epsilon/4, 
\mbox{ for all $\lambda \in \Lambda_0$ and $n \geq n_0$}, 
\end{align}
where $\rho^{(\lambda)}(n)$ and $\nu^{(\lambda)}(n)$ are the quantities $\rho(n)$ and $\nu(n)$ 
with initial environment $\omega(x) = \lambda$, $x \geq 0$. 

Now, for any fixed $j$, Lemma \ref{lem:muEqual1} implies
\begin{align*}
& j = \sum_{i=1}^j \left<\psi, E \right> 
= \sum_{i=1}^j \left<\psi A^{j-1}, E \right>
= \sum_{i=1}^j \sum_{\lambda \in \Lambda_0} \psi_{\lambda} \left<e_{\lambda} A^{j-1}, E \right> \\
& =  \sum_{i=1}^j \sum_{\lambda \in \Lambda_0} \psi_{\lambda} \cdot \E(\Gamma_j^{(\lambda)}) 
= \sum_{\lambda \in \Lambda_0} \psi_{\lambda} \cdot \E(U^{(\lambda)}(j)),
\end{align*} 
where $\Gamma_j^{(\lambda)}$ and $U^{(\lambda)}(j)$ are the random variables  
$\Gamma_j$ and $U(j)$, with initial environment $\omega(x) = \lambda$, $x \geq 0$. 
Thus, for any fixed $j$,
\begin{align*}
\sum_{\lambda \in \Lambda_0} \psi_{\lambda} \cdot \big[ \E(U^{(\lambda)}(j)) - j\big]
= \sum_{\lambda \in \Lambda_0} \psi_{\lambda} \cdot \rho^{(\lambda)}(j) = 0,
\end{align*}  
so there exists some $\lambda_j \in \Lambda_0$ such that
\begin{align}
\label{eq:rholambdaj}
\rho^{(\lambda_j)}(j) \leq 0. 
\end{align}

Define $\lambda^* = \lambda_{n_0}$.  Then, by \eqref{eq:WhenBiggern0} and \eqref{eq:rholambdaj},
$\rho^{(\lambda^*)}(n) \leq \rho^{(\lambda^*)}(n_0) + \epsilon/4 = \rho^{(\lambda_{n_0})}(n_0) + \epsilon/4 \leq \epsilon/4$, 
for $n \geq n_0$. So, by \eqref{eq:WhenBiggern0}, 
\begin{align*}
\theta^{(\lambda^*)}(n) 
= \frac{2 \rho^{(\lambda^*)}(n)}{\nu^{(\lambda^*)}(n)} 
\leq \frac{2 \cdot (\epsilon/4)}{\epsilon/2} = 1, \mbox{ for } n \geq n_0. 
\end{align*} 
It follows, from \eqref{eq:ThetaTransienceRecurrenceCondition}, that $\P_{\omega}(X_n \rightarrow \infty) = 0$ for any initial environment 
$\omega$ with $\omega(x) = \lambda^*$, for all $x \geq 0$. Thus, also, $\P_{\omega}(X_n \rightarrow \infty) = 0$ for any initial environment 
$\omega$ which is equal to $\lambda^*$ in a neighborhood of $+\infty$. An analogous argument shows that there exists some $\lambda_{*}$ 
such that $\P_{\omega}(X_n \rightarrow -\infty) = 0$, for any initial environment $\omega$ which is equal to $\lambda_{*}$ in a neighborhood 
of $-\infty$. So, by part (ii) of Lemma \ref{lem:SimpleTransConditions}, the random walk $(X_n)$ is $\P_{\omega}$ a.s. recurrent for any initial 
environment $\omega$ which is equal to $\lambda^*$ in a neighborhood of $+\infty$ and equal to $\lambda_{*}$ in a neighborhood 
of $-\infty$. By Lemma \ref{lem:ComparisonOfEnvironments}, and symmetry considerations, we may take $\lambda^* = (q,0)$ and 
$\lambda_{*} = (p,0)$ in the case of positive feedback, $q < p$. 
\end{proof}

\subsection{Proof of Theorem \ref{L1andEnvironment}}
\label{subsec:ProofThmL1andEnvironment}

For notational convenience in the proof of Theorem \ref{L1andEnvironment} we define
\begin{align*}
\lambda_0 = (q,0),...,\lambda_{R-1} = (q,R-1), \lambda_R = (p,0).
\end{align*}
As discussed above, in the case $L=1$ the transition matrix $\Ah$ is degenerate with $\Ah_{\lambda,(q,0)} = 1$, 
for all $\lambda \in \Lambda$. Thus, in this case, $\omega^j = (q,0)$, for all $j \geq 2$ (independent of the values of the 
$\Gamma_j$'s). Also, with $L = 1$, $\psi$ is simply the length-$1$ vector $1$ and $E$ is simply the length-$1$ 
vector $\E(S_0)$. The following facts are immediate from this.
\begin{enumerate}
\item If $L = 1$ and $\omega(x) = \lambda_i$, $x \geq 0$, then
\begin{align}
\label{eq:L1GammajDist}
\Gamma_1, \Gamma_2,... \mbox{ are independent with }
\Gamma_1 \sim S_i  ~\mbox{ and }~ \Gamma_j \sim S_0,~ j \geq 2.
\end{align}
\item If $L=1$ and $\alpha = 1/2$ then, by Lemma \ref{lem:muEqual1},
\begin{align}
\label{eq:ExS0is1}
\E(S_0) = \left<\psi, E \right> = 1.
\end{align}
\end{enumerate}
Using these facts we now prove Theorem \ref{L1andEnvironment}.

\begin{proof}[Proof of Theorem \ref{L1andEnvironment}]

By assumption $\alpha = 1/2$ and $L=1$, and the initial environment is a constant in a neighborhood of $-\infty$ in
the case of negative feedback, $p < q$. Thus, by Theorem \ref{RorL1}, the probability of the random walk $(X_n)$
being transient to $-\infty$ is equal to $0$\footnotemark{}. So, by Lemma \ref{lem:SimpleTransConditions},
the probability of being transient to $+\infty$ is either $0$ or $1$, and if it is $0$, the process is recurrent.
Moreover, without loss of generality, clearly we may  assume that the initial environment $\omega$ is constant for all
$x \geq 0$ (rather than only in a neighborhood of $+\infty$). Thus, it suffices to show the following to 
establish the transience/recurrence claims in the theorem:
\footnotetext{Theorem \ref{RorL1} is not proved till later in section \ref{subsec:ProofThmRorL1},
but the proof is independent of the proof of this theorem.} 
\begin{align}
\label{eq:BulletConditions}
\bullet & \mbox{ If $\omega(x) = \lambda_0$ for all $x \geq 0$, then $\P_{\omega}(X_n \rightarrow \infty) = 0$. } \nonumber \\
\bullet & \mbox{ If $\omega(x) = \lambda_i$ for all $x \geq 0$, $1 \leq i \leq R$, then
$\P_{\omega}(X_n \rightarrow \infty) = 0$ } \nonumber \\ & \mbox{ if and only if $P_{R,i}(q) \geq 0$. }
\end{align}

For the remainder of the proof we assume that
\begin{align}
\label{eq:ConstantEnvironmentToRight}
\omega(x) = \lambda_i,~ x \geq 0,
\end{align}
for some $0 \leq i \leq R$. To determine if there is positive probability of transience to $+\infty$ we will 
calculate $\theta(n) = 2 \rho(n) / \nu(n)$ and apply \eqref{eq:ThetaTransienceRecurrenceCondition}. 

We begin with $\rho(n)$. Since $L=1$ and $\alpha = 1/2$, $\E(S_0) = 1$, by \eqref{eq:ExS0is1}.
Thus, by  \eqref{def:rhonuthetaalphahalf} and \eqref{eq:L1GammajDist},
\begin{align}
\label{eq:rhosimple}
\rho(n) & = \E(S_i) + (n-1) \E(S_0) - n = \E(S_i) - 1.
\end{align}
A direct computation yields
\begin{equation}\label{ExpSi}
\begin{aligned}
\mathbb{E}(S_i)
& = \sum_{k = 0}^{R-i-1} k \cdot q^k(1-q) + \sum_{k = R - i}^{\infty} k \cdot q^{R-i} p^{k-(R-i)}(1-p) \\
& =\frac1{1-q}\big[-(1-q)q^{R-i}(R-i)+(1-q^{R-i})q\big] ~+\\
&~~~~~~ \frac1{1-p}(\frac qp)^{R-i}\big[(1-p)p^{R-i}(R-i)+p^{R-i+1}\big].
\end{aligned}
\end{equation}
Since $L=1$ and $\alpha = 1/2$, Proposition \ref{prop:PropertiesOfAlpha} implies
\begin{equation}
\label{pqcrit}
p=p_0=\frac{1-2q+q^{R+1}}{1-2q+q^R},\ \
1-p=1-p_0=\frac{q^R-q^{R+1}}{1-2q+q^R}.
\end{equation}
Substituting for $p$ above we obtain, after some lengthy simplifications,
$$
\mathbb{E}(S_i)=\frac{1-2q+q^{i+1}}{q^i(1-q)}.
$$
Thus, by \eqref{eq:rhosimple},
\begin{align}
\label{eq:rho}
\rho(n) =\frac{1-2q-q^i+2q^{i+1}}{q^i(1-q)} ~,~ \mbox{ for all } n. 
\end{align}

We now turn to the calculation of $\nu(n)$. From \eqref{def:rhonuthetaalphahalf} we recall that $\nu(n) = \E[(U(n)-n)^2]/n$. 
Using the independence of the random variables $\Gamma_1,...,\Gamma_n$ we have
\begin{equation}
\label{U(n)-n}
\begin{aligned}
&\mathbb{E}\left[(U(n)-n)^2\right]=\mathbb{E}\Big[\Big(\sum_{j=1}^n\Gamma_j-(n-1+\E(S_i))-(1-\E(S_i))\Big)^2\Big]=\\
&\Var(S_i)+(n-1)\Var(S_0)+(1 - \E(S_i))^2 = n[\E(S_0^2)-1]+ O(1).
\end{aligned}
\end{equation}
A tedious computation gives
\begin{equation}\label{expS0squared}
\begin{aligned}
\mathbb{E}(S_0^2)
& = \sum_{k = 0}^{R-1} k^2 \cdot q^k(1-q) + \sum_{k = R}^{\infty} k^2 \cdot q^R p^{k-R}(1-p) \\
& = \frac{q+q^2-q^R\Big(R^2-(2R^2-2R-1)q+(R-1)^2q^2\Big)}{(1-q)^2} ~+\\
& ~~~~~~~~ \frac{q^R\Big(R^2-(2R^2-2R-1)p+(R-1)^2p^2\Big)}{(1-p)^2}.
\end{aligned}
\end{equation}
Substituting for $p$ from \eqref{pqcrit} and doing a lot of  algebra, one eventually finds that
\begin{equation}\label{ES02}
\mathbb{E}(S_0^2)=\frac1{q^R(1-q)^2}\Big[2-8q+8q^2+(2R+1)q^R+(2-6R)q^{R+1}+(4R-5)q^{R+2}\Big].
\end{equation}
Finally, from \eqref{U(n)-n} and \eqref{ES02}, we have
\begin{equation}\label{eq:nu}
\begin{aligned}
&\nu(n)= \frac{\mathbb{E}[(U(n)-n)^2]}n=\mathbb{E}(S_0^2)-1+O(\frac1n)=\\
&\frac1{q^R(1-q)^2}\Big[2-8q+8q^2+2Rq^R+(4-6R)q^{R+1}+(4R-6)q^{R+2}\Big]+O(\frac1n).
\end{aligned}
\end{equation}

Now, combining \eqref{eq:rho} and \eqref{eq:nu} shows that $\theta(n) = \theta + O(\frac1n)$ where
\begin{equation}\label{thetaagain}
\begin{aligned}
&~\theta =\frac{q^{R-i}(1-q)(1-2q-q^i+2q^{i+1})}{1-4q+4q^2+Rq^R+(2-3R)q^{R+1}+(2R-3)q^{R+2}} \\
&= \frac{q^{R-i}-3q^{R-i+1}+2q^{R-i+2}-q^R+3q^{R+1}-2q^{R+2}}{1-4q+4q^2+Rq^R+(2-3R)q^{R+1}+(2R-3)q^{R+2}}.
\end{aligned}
\end{equation}
For $1 \leq i \leq R$, $\theta \leq 1$ is equivalent to $P_{R,i}(q) \geq 0$, and in the case $i = 0$, 
$\theta$ is $0$. Thus,  \eqref{eq:BulletConditions} follows from \eqref{eq:ThetaTransienceRecurrenceCondition}. 

It remains only to show the claims concerning the polynomial $P_{R,R}(q)$. For these
we will use the factored representation $P_{R,R}(q) = q(1-q)^2 \Pt_{R,R}(q)$, where
$\Pt_{R,R}(q)=-1+\sum_{j=1}^{R-3}jq^{j+1}+(2R-1)q^{R-1}$, as in \eqref{eq:FactoredPolyR}.
Since, $P_{R,R}(q)$ and $\Pt_{R,R}(q)$ have the same sign for all $q \in (0,1)$, it suffices to prove
the claims for the polynomial $\Pt_{R,R}(q)$.

Now, clearly, $\Pt_{R,R}$ is increasing, and $\Pt_{R,R}(0)=-1$.
For $R\ge4$, one can rewrite $\Pt_{R,R}$ as
$\Pt_{R,R}(q)=-1+(\frac q{1-q})^2[1-(R-2)q^{R-3}+(R-3)q^{R-2}]+(2R-1)q^{R-1}$.
Using this, we find that  $\Pt_{R,R}(\frac12)=(\frac12)^{R-1}$, for all $R\ge2$.
Consequently, $\Pt_{R,R}$ has a unique root $q_*(R)\in(0,\frac12)$, with
$\Pt_{R,R}(q) < 0$ for $q<q_*(R)$ and $\Pt_{R,R}(q) > 0$ for $q > q_*(R)$.
Furthermore,
\begin{align} \label{PRdec}
&\Pt_{R+1,R+1}(q)-\Pt_{R,R}(q)=(2R+1)q^R-(R+1)q^{R-1}= \nonumber \\
&q^{R-1}\big[(2R+1)q-(R+1)\big]\le
-\frac12q^{R-1} < 0,\ \text{for}\ q\in[0,\frac12].
\end{align}
So, $q_*(R)$ is increasing in $R$. Also, we have
$\Pt_{\infty,\infty}(q)\equiv\lim_{R\to\infty}\Pt_{R,R}(q)=\frac{2q-1}{(1-q)^2}$.
Since the root of $\Pt_{\infty,\infty}$ is at $q=\frac12$, it follows
that $\lim_{R\to\infty}q_*(R)=\frac12$.
\end{proof}

\subsection{Proof of Theorem \ref{LR2crit}}
\label{subsec:ProofThmLR2crit}

For the proof of Theorem \ref{LR2crit} we will need the following lemma. 

\begin{Lem}
\label{lem:MustBeTransientTopmInfinity} 
If the initial environment $\omega$ is constant in a neighborhood of $+\infty$ ($-\infty$) and
$\P_{\omega}(X_n \rightarrow +\infty) > 0$ ($\P_{\omega}(X_n \rightarrow -\infty) > 0$), then
\begin{align*}
\P_{\omega}(X_n \rightarrow +\infty) + \P_{\omega}(X_n \rightarrow -\infty) = 1. 
\end{align*}
\end{Lem}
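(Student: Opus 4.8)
The plan is to prove the ``$+\infty$'' assertion, the ``$-\infty$'' one being identical after the left--right reflection of the model, and to reduce it to showing that a single ``oscillation'' event has probability zero. So fix an environment $\omega$ with $\omega(x)=\lambda$ for all $x\ge m$, where we may take $m\ge 1$, and with $\P_\omega(X_n\to+\infty)>0$. The events $\{X_n\to+\infty\}$ and $\{X_n\to-\infty\}$ are disjoint, and by part (ii) of Lemma \ref{lem:SimpleTransConditions} their union is, up to $\P_\omega$-null sets, the complement of $\OM:=\{\liminf_n X_n=-\infty\}\cap\{\limsup_n X_n=+\infty\}$ (indeed that lemma forces $\liminf_n X_n$ and $\limsup_n X_n$ to lie in $\{-\infty,+\infty\}$ a.s.). Hence the whole lemma reduces to proving $\P_\omega(\OM)=0$.

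First I would record that a certain escape probability is strictly positive. Let $\beta_*=\P_\lambda(Z_x>0\ \forall x>0)$ be the survival probability (from $Z_0=1$) of the right jumps Markov chain attached to the \emph{constant} environment identically equal to $\lambda$; this is precisely the time-homogeneous chain of Section \ref{subsec:StepDistributionZxChain}, whose positive states communicate. To see $\beta_*>0$: by Lemma \ref{lem:ProbSurvivalZxProbTransienceXn}, the hypothesis $\P_\omega(X_n\to+\infty)>0$ gives $\P_\omega(Z_x>0\ \forall x>0)>0$ for the right jumps chain of $\omega$; since the transition from $Z_{x-1}$ to $Z_x$ depends only on $\omega(x)$, for $x\ge m$ that chain evolves exactly as the $\lambda$-chain, so the $\lambda$-chain survives from some state $n\ge1$ with positive probability. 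Communication of the positive states then upgrades this to positive survival from state $1$, i.e. $\beta_*>0$. (Only irreducibility on the positive states is needed here, not Lemma \ref{lem:ThetaLessGreater1}.)

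The heart of the argument is then a geometric cascade of ``descent'' events, each costing a factor $1-\beta_*$. Put $N_1=m$ and $\sigma_1=T_m$, and define recursively, whenever $\sigma_k<\infty$: $\rho_k=\inf\{n>\sigma_k:X_n=N_k-1\}$, $N_{k+1}=1+\max_{0\le n\le\rho_k}X_n$, and $\sigma_{k+1}=\inf\{n>\rho_k:X_n=N_{k+1}\}$ (with the usual convention $\inf\emptyset=+\infty$), and set $D_k=\{\rho_k<\infty\}$. I would then establish three facts. (a) On $\OM$ all the $\sigma_k,\rho_k$ are finite and all $D_k$ occur --- the walk reaches every level since $\limsup=+\infty$ and later descends past it since $\liminf=-\infty$ --- so $\OM\subseteq\bigcap_{j=1}^k D_j$ for every $k$. (b) On $\{\sigma_k<\infty\}$ the walk is at $N_k$ and no site $\ge N_k$ has been visited before $\sigma_k$ (for $k=1$ because $\sigma_1=T_m$ is the first visit to $m$ and $X_0=0<m$; for $k\ge2$ because $N_k-1$ is the running maximum at $\rho_{k-1}$ and $\sigma_k$ is the first subsequent visit to $N_k$), hence $\omega_{\sigma_k}(y)=\lambda$ for all $y\ge N_k$. (c) By (b), the strong Markov property for $(X_n,\omega_n)$, and Proposition \ref{prop:SurvivalZxTransienceXn} applied to the walk translated to start at $N_k$ (for which ``never reaching $N_k-1$'' is exactly survival of the $\lambda$-chain, independently of the environment to the left of $N_k$), one gets $\P_\omega(D_k\mid\mathcal F_{\sigma_k})=1-\beta_*$ on $\{\sigma_k<\infty\}$. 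Since $\sigma_1<\rho_1<\sigma_2<\cdots$ and $D_k$ forces $\sigma_k<\infty$, the event $\bigcap_{j=1}^{k}D_j$ is contained in $\bigcap_{j=1}^{k-1}D_j\cap\{\sigma_k<\infty\}\in\mathcal F_{\sigma_k}$; conditioning on $\mathcal F_{\sigma_k}$ and applying (c) gives $\P_\omega(\bigcap_{j\le k}D_j)\le(1-\beta_*)\P_\omega(\bigcap_{j\le k-1}D_j)\le(1-\beta_*)^k$, and letting $k\to\infty$ with (a) and $\beta_*>0$ yields $\P_\omega(\OM)=0$.

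The main obstacle is the construction in the previous paragraph: the refreshed levels $N_{k+1}=1+(\text{running max at }\rho_k)$ and the descent times $\rho_k$ must be chosen so that the events $D_k$ genuinely nest and are decided at strictly increasing times (so that they multiply under the strong Markov property), \emph{and} so that each time the walk next arrives at $N_{k+1}$ the environment to its right is still the pristine constant $\lambda$ --- this is what makes each one-step escape probability equal to exactly $\beta_*$. Once this scaffolding is in place, the reduction to $\P_\omega(\OM)=0$, the positivity of $\beta_*$, and the geometric estimate are all routine.
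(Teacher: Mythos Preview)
Your proof is correct and follows essentially the same strategy as the paper's: at each fresh record level the environment to the right is the pristine constant $\lambda$, giving a fixed positive ``escape'' probability, so the oscillation event lies inside a nested sequence whose probability decays geometrically. The only cosmetic differences are that you take the escape event to be ``never drop to $N_k-1$'' with probability $\beta_*=\P_\lambda(Z_x>0\ \forall x>0)$ (via Proposition~\ref{prop:SurvivalZxTransienceXn}), whereas the paper uses ``step right and never return'' with probability $\delta=\P_{\omega'}(\AM_0^+)$ (via \eqref{eq:DeltaChanceOfEscape}); these are related by $\delta=\P_{\omega'}(X_1=1)\cdot\beta_*$, and either choice works.
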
 

\begin{proof}
We will prove the claim for the case of constant initial environment in a neighborhood of $+\infty$;
the other claim follows from symmetry considerations. Thus, we assume the environment $\omega$ 
satisfies $\P_{\omega}(X_n \rightarrow \infty) > 0$ and $\omega(x) = \lambda$, $x \geq N$, for some 
$\lambda \in \Lambda$ and $N \in \N$. Also, we let $\omega'$ be any environment with $\omega'(x) = \lambda$, 
for all $x \geq 0$. Since we assume $\P_{\omega}(X_n \rightarrow \infty) > 0$ it follows 
from part (i) of Lemma \ref{lem:SimpleTransConditions} that $\P_{\omega'}(X_n \rightarrow \infty), \P_{\omega'}(\AM_0^+) > 0$,
and we define $\delta = \P_{\omega'}(\AM_0^+)$. 

If the random walk $(X_n)$ is run starting in $\omega$, then at every time $n$ that it first hits a site 
$x \geq N$ the environment at time $n$ is equal to $\lambda$ at all sites $y \geq x$. Thus,
\begin{align}
\label{eq:DeltaChanceOfEscape}
& \P_{\omega}\big(\AM_n^+ | X_0 = x_0, ..., X_n = x_n \big)
= \P_{\omega'}(\AM_0^+) = \delta, \nonumber \\
& \mbox{for any path $(x_0,...,x_n)$ satisfying $x_0 = 0, x_n \geq N, x_m < x_n$ for $m < n$}.  
\end{align}
We define stopping times $\tau_i, \tau_i'$ and stopping points $z_i$ as follows:
\begin{itemize}
\item $\tau_0 = T_N$, $z_0 = N$.
\item For $i \geq 1$, 
\begin{align*}
& \tau_i' = \inf \{n > \tau_{i-1}: X_n = z_{i-1}\}, \\
& z_i = \sup\{X_n : \tau_{i-1} \leq n < \tau_i' \}, \\
& \tau_i = \inf \{n > \tau_i' : X_n = z_i + 1\}. 
\end{align*}
\end{itemize}
Of course, not all these stopping times are necessarily finite. We think of the times as an ordered list $\tau_0, \tau_1', \tau_1, \tau_2', \tau_2,...$ 
and if any element in this list is equal to $\infty$ then all elements to the right of it are defined to be $\infty$ as well. By construction, the list is 
an increasing sequence of positive integers up till the point of the first $\infty$, and it follows from \eqref{eq:DeltaChanceOfEscape} that,
for any $i \geq 1$,
\begin{align*}
\P_{\omega}(\tau_i' < \infty|\tau_{i-1} < \infty) \leq 1 - \delta.
\end{align*}
Thus, for any $i \geq 1$, 
\begin{align*}
\P_{\omega}(\tau_i < \infty) = \P_{\omega}(\tau_0 < \infty) \cdot 
\prod_{j=1}^i \Big( \P_{\omega} (\tau_j' < \infty|\tau_{j-1} < \infty) \cdot \P_{\omega}(\tau_j < \infty|\tau_j' < \infty) \Big) \leq (1-\delta)^i.
\end{align*}
So, $\P_{\omega}(\tau_i < \infty, \mbox{ for all } i > 0) = 0$. However, by part (ii) of Lemma \ref{lem:SimpleTransConditions}, we also have 
$\P_{\omega}(X_n \not\rightarrow +\infty \mbox{ and } X_n \not\rightarrow -\infty) \leq \P_{\omega}(\tau_i < \infty, \mbox{ for all } i > 0)$. 
Thus, $\P_{\omega}(X_n \rightarrow +\infty) + \P_{\omega}(X_n \rightarrow -\infty) = 1$. 
\end{proof}

\begin{proof}[Proof of Theorem \ref{LR2crit}]

We will show that if $\omega(x)=(p,0)$ for $x \geq 0$, then $\theta(n)$ from \eqref{def:rhonuthetaalphahalf} satisfies 
$\theta(n) \leq 1+O(\frac1n)$, if $q\ge q_1^*$, and $\lim_{n\to\infty}\theta(n)>1$, if $q<q_1^*$. We will also show that 
if $\omega(x)=(p,1)$ for $x \geq 0$ or $\omega(x)=(q,1)$ for $x \geq 0$, then $\theta(n)$ satisfies $\theta(n)\le 1+O(\frac1n)$ 
for all $q\in(0,1)$. Finally, we will show that if $\omega(x)=(q,0)$ for $x \geq 0$, then $\theta(n)$ satisfies 
$\theta(n)\le 1+O(\frac1n)$, if $q\le q_2^*$, and $\lim_{n\to\infty}\theta(n)>1$, if $q>q_2^*$. It follows, 
by \eqref{eq:ThetaTransienceRecurrenceCondition}, that:
\begin{align}
\label{eq:TransiencPossibilitiesConstantEnvironmentsRL2} 
& \bullet \mbox{ If $\omega(x) = (p,0)$ for $x \geq 0$, then $\P_{\omega}(X_n \rightarrow \infty) > 0$ if and only if $q < q_1^*$.} \nonumber \\
& \bullet \mbox{ If $\omega(x) = (q,0)$ for $x \geq 0$, then $\P_{\omega}(X_n \rightarrow \infty) > 0$ if and only if $q > q_2^*$.} \nonumber \\
& \bullet \mbox{ If $\omega(x) = (p,1)$ for $x \geq 0$ or $\omega(x) = (q,1)$ for $x \geq 0$, then $\P_{\omega}(X_n \rightarrow \infty) = 0$.}
\end{align}
Clearly, \eqref{eq:TransiencPossibilitiesConstantEnvironmentsRL2} is still valid if the hypothesis ``$x \geq 0$'' is changed to 
``$x$ in a neighborhood of $+\infty$''. Thus, this will prove the theorem, in light of Lemmas \ref{lem:SimpleTransConditions} 
and \ref{lem:MustBeTransientTopmInfinity}, and the symmetry that holds because $R=L$ and $p = 1-q$, along with 
Lemma \ref{lem:ComparisonOfEnvironments} in the case $q<p$, where the theorem sometimes allows for nonconstant 
environments in a neighborhood of $+\infty$ or $-\infty$. 

By definition, $\theta(n) = 2\rho(n)/\nu(n)$. The calculation of the two components $\rho(n)$ and $\nu(n)$ will be 
done separately, but we begin first with some general setup that will be used in both cases. Throughout we will 
use implicitly the following basic fact many times, which is immediate from the construction of the joint process
$(\omega^j,\Gamma_j)$:
\begin{align*}
\mbox{ Conditioned on $\omega^i$, $(\omega^j,\Gamma_j)_{j=1}^{i-1}$ and $(\Gamma_j)_{j=i}^{\infty}$ are independent. }
\end{align*}

\noindent
\emph{Setup} \\
Since $L=2$, the transition matrix $A$ defined in section \ref{subsec:StepDistributionZxChain}
corresponds to the recurrent states $(p,1)$ and  $(q,0)$. Ordering the states in the order they appear here, 
and using the fact that $R=2$ and the assumption $p = 1-q$, it follows from \eqref{Ahatmatrix} that
\begin{equation*}
A=
\left(\begin{matrix}
p& 1-p \\ q^2& 1-q^2
\end{matrix}\right)
= \left(\begin{matrix}
1-q& q  \\ q^2& 1-q^2
\end{matrix}\right).
\end{equation*}
This matrix $A$ has eigenvalues
\begin{align}
\label{eq:RL2Eigenvalues}
\eta_1 = 1 ~,~ \eta_2 = 1 - q - q^2
\end{align} 
with corresponding left eigenvectors
\begin{align}
\label{eq:RL2Eigenvectors}
w_1 = (q,1) ~,~ w_2 = (-1,1).
\end{align}
For future reference we observe that $|\eta_2| < 1$, for any $q \in (0,1)$, and that 
the unit vectors $e_1 = (1,0)$ and $e_2 = (0,1)$ decompose as 
\begin{align}
\label{eq:e1e2Decomposition}
e_1 = c_1 w_1 + c_2 w_2 ~,~ e_2 = d_1 w_1 + d_2 w_2 
\end{align}
where 
\begin{align}
\label{eq:c1c2d1d2}
c_1 = \frac{1}{q+1} ~,~ c_2 = - \frac{1}{q+1} ~,~ d_1 = \frac{1}{q+1} ~,~ d_2 = \frac{q}{q+1}.
\end{align} 

With $R=2$ and $p = 1-q$ the distribution of the random variables $S_0, S_1$ 
and $S_2$ given in \eqref{Sidist} becomes:
\begin{align}
\label{SidistRL2}
& \P(S_0 = k) = q^k(1-q),~ k = 0,1 ~\mbox{ and }~  \P(S_0 = k) = q^3 (1-q)^{k-2}, ~k \geq 2. \nonumber \\
& \P(S_1 = 0) = 1-q ~\mbox{ and }~ \P(S_1 = k) = q^2 (1-q)^{k-1}, ~k \geq 1. \nonumber \\
& \P(S_2 = k) = q(1-q)^k,~ k \geq 0. 
\end{align}
Direct calculations yield
\begin{align}
\label{eq:ListOfExpectations}
& \E(S_0) = 2q ~,~ \E(S_1) = 1 ~,~ \E(S_2) = (1-q)/q, \nonumber \\
& \E(S_0^2) = 2(1+q) ~,~\E(S_2^2) = (1-q)(2-q)/q^2, \nonumber \\
& \E(S_0|S_0 \geq 2) = (1+q)/q ~,~ \E(S_2|S_2 \geq 1) = 1/q. 
\end{align} 

By \eqref{eq:GammajDist} we have $\E(\Gamma_i|\omega^i = (p,1)) = \E(S_2)$ and $\E(\Gamma_i|\omega^i = (q,0)) = \E(S_0)$,
and with our chosen state ordering $(p,1),(q,0)$ the expectation vector $E$ from section \ref{subsec:StepDistributionZxChain} 
becomes $E = (\E(S_2), \E(S_0))$. Thus, since $\omega^{i+\ell}$ is distributed as $e_1A^{\ell}$ when $\omega^i = (p,1)$ 
and as $e_2 A^{\ell}$ when $\omega^i = (q,0)$, we have
\begin{align}
\label{eq:ExTil_given_omegaip1}
\E\Big(&\Gamma_{i+\ell}\Big|\omega^i = (p,1)\Big) \nonumber \\
& = \P\Big(\omega^{i+\ell} = (p,1)\Big|\omega^i = (p,1)\Big) \cdot \E(S_2) ~+~ \P\Big(\omega^{i+\ell} = (q,0)\Big|\omega^i = (p,1)\Big) \cdot \E(S_0) \nonumber \\
& = \left<e_1A^{\ell}, E \right> = c_1 \left<w_1,E\right> \eta_1^{\ell} + c_2 \left<w_2,E\right> \eta_2^{\ell}
= 1 + \Big(\frac{1-2q}{q} \Big) \eta_2^{\ell},
\end{align}
and 
\begin{align}
\label{eq:ExTil_given_omegaiq0}
\E\Big(&\Gamma_{i+\ell}\Big|\omega^i = (q,0)\Big) \nonumber \\
& = \P\Big(\omega^{i+\ell} = (p,1)\Big|\omega^i = (q,0)\Big) \cdot \E(S_2) ~+~ \P\Big(\omega^{i+\ell} = (q,0)\Big|\omega^i = (q,0)\Big) \cdot \E(S_0) \nonumber \\
&= \left<e_2A^{\ell}, E \right> = d_1 \left<w_1,E\right> \eta_1^{\ell} + d_2 \left<w_2,E\right> \eta_2^{\ell}
= 1 + (2q-1) \eta_2^{\ell},
\end{align} 
for any $\ell \geq 0$. Similarly, denoting $E' = (\E(S_2^2), \E(S_0^2))$, we have 
\begin{align}
\label{eq:ExTil2_given_omegaip1}
\E\Big(&\Gamma_{i+\ell}^2\Big|\omega^i = (p,1)\Big) \nonumber \\
&= \left<e_1A^{\ell}, E' \right> 
= c_1 \left<w_1, E'  \right> \eta_1^{\ell} + c_2 \left<w_2, E'  \right> \eta_2^{\ell}
= \frac{2-q+3q^2}{q(1+q)} + O(|\eta_2|^{\ell}).
\end{align}

\noindent
\emph{Calculation of $\rho(n)$} \\
We will write $\rho^{(p,0)}(n)$ for the quantity $\rho(n)$ when the initial environment is $\omega(x) = (p,0)$, 
$x \geq 0$, and similarly we denote by $\rho^{(q,0)}(n)$, $\rho^{(p,1)}(n)$, and $\rho^{(q,1)}(n)$ the 
quantity $\rho(n)$ with initial environments $(q,0)$, $(p,1)$, and $(q,1)$ for $x \geq 0$. In all cases, 
we have, from \eqref{def:rhonuthetaalphahalf}, $\rho(n) = \E(U(n)) - n = \sum_{j=1}^n \E(\Gamma_j) - n$, 
where the expectation is (implicitly) the expectation conditioned on $\omega^1 = (p,0),(q,0),(p,1)$, or $(q,1)$. 

Using \eqref{eq:ExTil_given_omegaip1} with $i = 1$, along with \eqref{eq:RL2Eigenvalues}, gives
\begin{align}
\label{eq:rhop1n}
& \rho^{(p,1)}(n) 
= \sum_{j=1}^n \E\Big(\Gamma_j\Big|\omega^1 = (p,1)\Big) - n 
= \sum_{j=1}^n \left[1 + \Big(\frac{1-2q}{q} \Big) \eta_2^{j-1}\right] - n \nonumber \\
& = \Big(\frac{1-2q}{q}\Big) \frac{1}{1 - \eta_2} + O(|\eta_2|^n)
=  \frac{1-2q}{q^2(1+q)} + O(|\eta_2|^n).
\end{align}
Similarly, using 
\eqref{eq:ExTil_given_omegaiq0} with $i = 1$, along with \eqref{eq:RL2Eigenvalues}, gives
\begin{align}
\label{eq:rhoq0n}
& \rho^{(q,0)}(n) 
= \sum_{j=1}^n \E\Big(\Gamma_j\Big|\omega^1 = (q,0)\Big) - n 
= \sum_{j=1}^n \left[1 + (2q-1) \eta_2^{j-1}\right] - n \nonumber \\
& = (2q-1) \frac{1}{1 - \eta_2} + O(|\eta_2|^n)
=  \frac{2q-1}{q(1+q)} + O(|\eta_2|^n).
\end{align}
Now, if $\omega^1 = (p,0)$ then $\Gamma_1$ is distributed according to $S_2$, and  $\omega^2 = (p,1)$ with probability 1, 
by \eqref{eq:omegajplus1_determined}. Thus, by \eqref{eq:ListOfExpectations} and \eqref{eq:rhop1n},
\begin{align}
\label{eq:rhop0n}
& \rho^{(p,0)}(n)
= \Big[\E\Big(\Gamma_1\Big|\omega^1 = (p,0)\Big) - 1\Big] + \Big[\sum_{j=2}^n \E\Big(\Gamma_j\Big|\omega^2 = (p,1)\Big) - (n-1)\Big] \nonumber \\
& = [\E(S_2) - 1]+ \rho^{(p,1)}(n-1) = \frac{(1-2q)(1+q+q^2)}{q^2(1+q)} + O(|\eta_2|^n). 
\end{align}
Finally, when $\omega^1 = (q,1)$ it follows from \eqref{eq:omegajplus1_determined} that $\omega^2 = (p,1)$ if 
$\Gamma_1 \geq 1$ and $\omega^2 = (q,0)$ if $\Gamma_1 = 0$. Since $\Gamma_1$ is distributed according 
to $S_1$ when $\omega^1 = (q,1)$, we have
\begin{align}
\label{eq:rhoq1n}
& \rho^{(q,1)}(n) 
= \E\Big(\Gamma_1\Big|\omega^1 = (q,1)\Big) + \sum_{j=2}^n \E\Big(\Gamma_j\Big|\omega^1 = (q,1)\Big) - n \nonumber \\
& = \E(S_1)
+ \P(S_1 = 0) \sum_{j=2}^n \E\Big(\Gamma_j\Big|\omega^2 = (q,0)\Big) 
+ \P(S_1 \geq 1) \sum_{j=2}^n \E\Big(\Gamma_j\Big|\omega^2 = (p,1)\Big) - n \nonumber \\
& = \E(S_1) - 1 + (1-q) \rho^{(q,0)}(n-1) + q \rho^{(p,1)}(n-1) = \frac{1-2q}{1+q} + O(|\eta_2|^n),
\end{align} 
by \eqref{eq:ListOfExpectations}, \eqref{eq:rhop1n}, and \eqref{eq:rhoq0n}. \\

\noindent 
\emph{Calculation of $\nu(n)$} \\
From \eqref{def:rhonuthetaalphahalf} we have, 
\begin{equation}\label{eq:nuDef}
\nu(n) = \frac{1}{n} \E[(U(n)-n)^2] = \frac{1}{n} \E\Big[\Big(\sum_{i=1}^n\Gamma_i-n\Big)^2\Big].
\end{equation}
We will show below that with $\omega(x) = (p,1)$ for $x \geq 0$, i.e. with $\omega^1 = (p,1)$,
\begin{equation}\label{eq:nun}
\begin{aligned}
\nu(n) = \frac{2(1-q)(1-q+q^2)}{q^2(q+1)} ~+~ O(\frac1n).
\end{aligned}
\end{equation}

Similar calculations show that \eqref{eq:nun} also holds with $\omega^1 = (p,0), (q,0)$ and $(q,1)$.
The latter are omitted for the sake of brevity. However, this asymptotic equivalence of $\nu(n)$ for 
different values of $\omega^1$ is to be expected from \eqref{eq:nuDef}, since the distribution of 
$\omega^j$ converges exponentially fast to the stationary distribution of the matrix $A$,
for any value of the initial state $\omega^1$. Thus, one can couple the joint processes $(\omega^j,\Gamma_j)_{j=1}^\infty$ 
starting from two different values of $\omega^1$ in such a way that the probability that the tails $(\omega^j,\Gamma_j)_{j=n}^\infty$ 
are not the same in the two processes decays exponentially in $n$. 

We now proceed to the calculation of $\nu(n)$ with $\omega^1 = (p,1)$; thus, henceforth, all expectations 
are conditioned on $\omega^1 = (p,1)$ if not otherwise stated. From \eqref{eq:nuDef} we have
\begin{equation}\label{nu(n)3Terms}
\nu(n) = \frac{1}{n} \sum_{i=1}^n \E(\Gamma_i^2) 
+ \frac{2}{n} \sum_{1 \leq i<j \leq n} \E(\Gamma_i\Gamma_j) 
- 2 \sum_{i=1}^n \E(\Gamma_i) + n.
\end{equation}
By \eqref{eq:ExTil2_given_omegaip1}, the first term on the right hand side of \eqref{nu(n)3Terms} is
\begin{align}
\label{eq:FirstTermnu(n)}
\frac{1}{n} \sum_{i=1}^n \E(\Gamma_i^2) 
= \frac{1}{n} \sum_{i=1}^n \left[ \frac{2-q+3q^2}{q(1+q)} + O(|\eta_2|^{i-1}) \right]
= \frac{2-q+3q^2}{q(1+q)} + O(\frac1n). 
\end{align} 
Also, by \eqref{eq:rhop1n}, the third term on the right hand side of \eqref{nu(n)3Terms} is
\begin{align}
\label{eq:ThirdTermnu(n)}
- 2 \sum_{i=1}^n \E(\Gamma_i) = - 2(n + \rho^{(p,1)}(n)) = -2n +  \frac{2(2q-1)}{q^2(1+q)} + O(|\eta_2|^n).
\end{align}
To compute the second term on the right hand side of \eqref{nu(n)3Terms} we observe that, by 
\eqref{eq:GammajDist} and \eqref{eq:omegajplus1_determined}, the following hold.
\begin{enumerate} 
\item[(a)] If $\omega^i = (p,1)$ then: \\ $~~~~\Gamma_i$ is distributed as $S_2$, and $\omega^{i+1} = (p,1)$ if $\Gamma_i \geq 1$. 
\item[(b)] If $\omega^i = (q,0)$ then: \\ $~~~~\Gamma_i$ is distributed as $S_0$, and $\omega^{i+1} = (p,1)$ if
$\Gamma_i \geq 2$ and $(q,0)$ otherwise. 
\end{enumerate}
From (a) and \eqref{SidistRL2}-\eqref{eq:ExTil_given_omegaip1} we have 
\begin{align}
\label{eq:ExGammaijGivenp1}
\E\Big(\Gamma_i \Gamma_j \Big|\omega^i = (p,1)\Big) 
& = \P( S_2 \geq 1) \cdot \E(S_2 | S_2 \geq 1) \cdot \E\Big(\Gamma_j\Big|\omega^{i+1} = (p,1)\Big) \nonumber \\
& = \frac{1-q}{q} + \Big[ \frac{(1-q)(1-2q)}{q^2} \Big] \eta_2^{j - (i+1)},
\end{align}
and from (b) and \eqref{SidistRL2}-\eqref{eq:ExTil_given_omegaiq0} we have
\begin{align}
\label{eq:ExGammaijGivenq0}
\E\Big(\Gamma_i \Gamma_j \Big|\omega^i = (q,0)\Big)
& = \P(S_0 \geq 2) \cdot  \E(S_0| S_0 \geq 2) \cdot \E\Big(\Gamma_j\Big|\omega^{i+1} = (p,1)\Big) \nonumber \\
&~~~ + \P(S_0 = 1) \cdot 1 \cdot \E\Big(\Gamma_j\Big|\omega^{i+1} = (q,0)\Big) \nonumber \\
& = 2q + (1-2q)(1+q^2) \eta_2^{j-(i+1)}.
\end{align}
Since $\omega^i$ is distributed as $e_1A^{i-1}$, under our assumption $\omega^1 = (p,1)$, 
\eqref{eq:ExGammaijGivenp1} and \eqref{eq:ExGammaijGivenq0} give
\begin{align*}
\E(\Gamma_i \Gamma_j)
& = \P\Big(\omega^i = (p,1)\Big) \cdot \E\Big(\Gamma_i \Gamma_j \Big| \omega^i = (p,1)\Big)  
~+~ \P\Big(\omega^i = (q,0)\Big) \cdot \E\Big(\Gamma_i \Gamma_j \Big| \omega^i = (q,0)\Big) \nonumber \\
& = \left<e_1A^{i-1}, \left( \frac{1-q}{q} + \Big[ \frac{(1-q)(1-2q)}{q^2} \Big] \eta_2^{j - (i+1)} ~,~ 2q + (1-2q)(1+q^2) \eta_2^{j-(i+1)} \right) \right>.
\end{align*} 
Using the decomposition \eqref{eq:e1e2Decomposition}-\eqref{eq:c1c2d1d2} this simplifies to 
\begin{align}
\label{eq:ExGammaij}
\E(\Gamma_i \Gamma_j) 
= 1 + \Big[\frac{(1-2q)(1-q+q^2)}{q}\Big] \eta_2^{j-(i+1)} + \Big[\frac{1-2q}{q}\Big] \eta_2^{i-1} + C \eta_2^{j-2}
\end{align}
after a bit of algebra, where $C$ is an unimportant constant which depends only on $q$. So, 
using \eqref{eq:RL2Eigenvalues}, we find
\begin{align}
\label{eq:SecondTermnu(n)}
\frac{2}{n}&  \sum_{1 \leq i<j \leq n} \E(\Gamma_i\Gamma_j) \nonumber \\
& = \frac{2}{n} \left( \frac{n(n-1)}{2} 
+ \Big[\frac{(1-2q)(1-q+q^2)}{q}\Big] \frac{n}{1 - \eta_2} 
+ \Big[\frac{1-2q}{q}\Big] \frac{n}{1 - \eta_2} 
+ O(1) \right) \nonumber \\
& = n - 1 + \frac{2(1-2q)(2-q+q^2)}{q^2(1+q)} ~+~ O(\frac1n). 
\end{align} 
Combining \eqref{nu(n)3Terms}-\eqref{eq:ThirdTermnu(n)} and \eqref{eq:SecondTermnu(n)} 
and simplifying one arrives at \eqref{eq:nun}. \\

\noindent
\emph{Calculation and Analysis of $\theta(n)$}\\
Recall that, in general, $\theta(n) = 2\rho(n)/\nu(n)$. We denote by $\theta^{(p,0)}(n)$ the quantity 
$\theta(n)$ with $\omega^1 = (p,0)$ and define $\theta^{(p,0)} = \lim_{n \to \infty} \theta^{(p,0)}(n)$. Also, 
we define the analogous quantities for $(q,0), (p,1)$, and $(q,1)$. 
From \eqref{eq:rhop1n}-\eqref{eq:rhoq1n} and \eqref{eq:nun} we have
\begin{align}
\label{eq:thetap0}
& \theta^{(p,0)}(n) = \theta^{(p,0)} + O(\frac1n) ~~\mbox{ where }~~ \theta^{(p,0)} = \frac{(1-2q)(1+q+q^2)}{(1-q)(1-q+q^2)}, \\
\label{eq:thetap1}
& \theta^{(p,1)}(n) = \theta^{(p,1)} + O(\frac1n) ~~\mbox{ where }~~ \theta^{(p,1)} = \frac{1-2q}{(1-q)(1-q+q^2)}, \\
\label{eq:thetaq0}
& \theta^{(q,0)}(n) = \theta^{(q,0)} + O(\frac1n) ~~\mbox{ where }~~ \theta^{(q,0)} = \frac{(2q-1)q}{(1-q)(1-q+q^2)}, \\
\label{eq:thetaq1}
& \theta^{(q,1)}(n) = \theta^{(q,1)} + O(\frac1n) ~~\mbox{ where }~~ \theta^{(q,1)} = \frac{q^2(1-2q)}{(1-q)(1-q+q^2)}.
\end{align}
From \eqref{eq:thetap0} it follows that $\theta^{(p,0)} > 1$ is equivalent to $1-3q-q^2 > 0.$
Consequently, $\theta^{(p,0)} > 1$ if and only if $q\in(0,q_1^*)$, where $q_1^*$ is as in the statement 
of the theorem. From \eqref{eq:thetap1} it follows that $\theta^{(p,1)}>1$ is equivalent to
$q>2$; thus, we always have $\theta^{(p,1)}\leq1$. From \eqref{eq:thetaq1} it follows that $\theta^{(q,1)} > 1$ 
is equivalent to $q^3+q^2-2q+1<0$. Since $q^3+q^2-2q+1=q^3+(1-q)^2$,
we always have $\theta^{(q,1)}\leq1$. Finally, from \eqref{eq:thetaq0} it follows that
$\theta^{(q,0)} > 1$ is equivalent to $q^3+q-1>0$.  Thus, $\theta^{(q,0)}>1$ if and only if 
$q>q^*_2$, where $q^*_2$ is as in the statement of the theorem. This establishes all claims made
in the first paragraph of the proof. 
\end{proof}

\subsection{Proof of Theorem \ref{RorL1}}
\label{subsec:ProofThmRorL1}

Thus far the proofs of transience/recurrence for the random walk $(X_n)$ have centered around
an analysis of the right jumps Markov chain $(Z_x)$. For the  proof of Theorem \ref{RorL1},
we will need to construct another auxiliary process called the left jumps Markov chain.

Consider the random walk $(X_n)_{n \geq 0}$ started from $X_0 = 0$ and restricted to $\mathbb{N}\cup\{0,-1\}$
by the following modification of its transition mechanism: when the walker is at a site $x\ge0$, it behaves as before, but
at the site $-1$ it jumps right with probability one. Denote the modified random walk by $(\widetilde{X}_n)_{n \geq 0}$.
Note that the modified random walk can be defined in terms of the extended single site Markov chains,
$(\widehat{Y}_n^x)_{n \in \N} = (Y_n^x, J_n^x)_{n \in \N}$, $x\ge0$, along with an appropriately defined deterministic
single site mechanism at $x=-1$. Fix $N\in\mathbb{Z}^+$ and let $\widetilde{T}_N=\inf\{n\ge0:\widetilde{X}_n=N\}$
denote the first time the modified random walk hits $N$. Note that $\Tt_N$ is almost surely finite. We define a
process $(\Wt^{(N)}_x)_{x=0}^N$ by setting $\Wt_x^{(N)}$ equal to the number of times the modified walk
$(\widetilde{X}_n)$ jumps left from site $x$ before time $\widetilde{T}_N$. That is,
$$
\Wt^{(N)}_x=|\{n\le \widetilde{T}_N-1:\widetilde{X}_n=x, \widetilde{X}_{n+1}=x-1\}|.
$$
We will refer to this process $(\Wt^{(N)}_x)_{x=0}^N$ as the \emph{left jumps $N$-chain}.
It can also be defined directly in terms of the jump sequences $(J_k^x)_{k \in \N}$, $0 \leq x \leq N$:
\begin{equation}\label{W}
\Wt^{(N)}_N\equiv0, \ \Wt^{(N)}_x=\Theta^{(N)}_x- \Wt^{(N)}_{x+1}-1,\ \ x\in\{N-1,N-2,\ldots, 0\},
\end{equation}
where
\begin{equation} \label{Wtheta}
\Theta^{(N)}_x=\inf \Big\{n\ge1: \sum_{k=1}^n \indicator \{J_k^x=1\}= \Wt^{(N)}_{x+1}+1\Big\}.
\end{equation}
That is, $\Wt^{(N)}_x$ is the number of left jumps in the jump sequence $(J_k^x)_{k \in \N}$ before
the $(\Wt^{(N)}_{x+1} + 1)$-th right jump. In particular, $\Wt^{(N)}_x$ is independent of $\Wt^{(N)}_{x+2},...,\Wt^{(N)}_{N}$
conditioned on $\Wt^{(N)}_{x+1}$, so the sequence $(\Wt^{(N)}_N, ..., \Wt^{(N)}_0)$ is Markovian. The distribution
of the jump sequence $(J_k^x)_{k \in \N}$ is the same for all $x \geq 0$, if the initial environment $\omega$ is constant for
all $x \geq 0$. So, in this case, the transition probabilities
\begin{align*}
\P(&\Wt^{(N)}_x = \ell | \Wt^{(N)}_{x+1} = m) \\
& ~~= \P \Big( \inf \Big\{ n \geq 1: \sum_{k = 1}^n \indicator \{J_k^x = 1\} = m + 1 \Big\} - (m+1) = \ell \Big)
\end{align*}
are independent of $N$ and $x \in \{0,...,N-2\}$, and we may define a single time-homogeneous
Markov chain $(W_n)_{n=0}^\infty$ such that $(\Wt^{(N)}_N, \Wt^{(N)}_{N-1}, \ldots, \Wt^{(N)}_0)$
has the same distribution as $(W_0,W_1,\ldots, W_N)$, for all $N$.

We call $(W_n)_{n=0}^\infty$ the \emph{left jumps Markov chain}. The following proposition characterizes
the transience or recurrence of the original random walk $(X_n)$ in terms of the positive recurrence or
non-positive recurrence of the left jumps Markov chain.

\begin{Prop}\label{leftjumps}
If $X_0 = 0$ and the initial environment $\omega(x)$ is constant for $x \geq 0$, then the random walk $(X_n)$
has positive probability of being transient to $+\infty$ if and only if the left jumps Markov chain $(W_n)$ is
positive recurrent.
\end{Prop}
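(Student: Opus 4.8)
The plan is to work with the modified walk $(\Xt_n)$ and the left jumps $N$-chains $(\Wt^{(N)}_x)_{x=0}^{N}$, using that for a constant initial environment $(\Wt^{(N)}_N,\dots,\Wt^{(N)}_0)$ is distributed as $(W_0,\dots,W_N)$. I would begin with the bookkeeping identity $\Tt_N=N+2\sum_{x=0}^{N-1}\Wt^{(N)}_x$: before time $\Tt_N$ every step of $\Xt$ is a left or a right jump, the left jumps occur only from sites $0,\dots,N-1$ and number $\sum_{x=0}^{N-1}\Wt^{(N)}_x$ in all, while the right jumps from $x$ number $\Wt^{(N)}_{x+1}+1$ since they are the upcrossings of the edge $(x,x+1)$ and there is exactly one more upcrossing than downcrossing of that edge (the walk runs from $0$ to $N$). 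Combined with the distributional identity, $\Tt_N$ is distributed as $N+2\sum_{k=1}^{N}W_k$. I would also record the reduction, immediate from Lemma \ref{lem:ProbSurvivalZxProbTransienceXn}, that $\P_\omega(X_n\to\infty)>0$ iff $\P_\omega(Z_x>0,\ \forall x>0)>0$, where, because $\omega$ is constant on $[0,\infty)$, the chain $(Z_x)$ is the time-homogeneous right jumps chain of section \ref{subsec:StepDistributionZxChain}, irreducible and aperiodic with the exception of state $0$.

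Writing $L_x$ for the total number of left jumps of $\Xt$ from site $x$, so that $\Wt^{(N)}_x\uparrow L_x$ as $N\to\infty$, I would next establish two bridging facts: (a) $\Xt$ is transient to $+\infty$ iff $L_x<\infty$ for every $x\ge 0$ (an excursion argument: since $\Tt_N<\infty$ for all $N$, if $\Xt$ is not transient it visits some level and the level above it infinitely often, forcing infinitely many downcrossings of that edge); moreover, since $\{L_x<\infty\}$ is decreasing in $x$ and measurable with respect to $(J^{x'}_k)_{x'\ge x}$, and the sequences $(J^x_k)_k$ are independent across $x$, the event $\{\Xt\text{ transient to }+\infty\}$ is a tail event and so has probability $0$ or $1$; (b) $\P_\omega(\Xt\text{ transient to }+\infty)=1$ iff $\P_\omega(Z_x>0,\ \forall x>0)>0$. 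The forward implication of (b) is the step I expect to be the main obstacle: on the event that $\Xt$ is transient to $+\infty$ its last visit to $0$ is a.s.\ finite, and from the next step onward $\Xt$ agrees with the walk started at $1$ in the environment then in force, which equals $\omega(0)$ to the right of a finite set and which the walk never again crosses to the left of $1$; feeding this into Proposition \ref{prop:SurvivalZxTransienceXn} and using time-homogeneity of $(Z_x)$ together with its irreducibility (positive survival probability from some state $j\ge1$ forces positive survival probability from state $1$) gives $\P_\omega(Z_x>0,\ \forall x>0)>0$. The reverse implication of (b) follows by running the original walk, passing to the modified walk (reflecting at $-1$ cannot decrease the chance of transience to $+\infty$, by a coupling) via Lemma \ref{lem:SimpleTransConditions}(i), and invoking the $0$--$1$ law from (a). This regeneration/time-homogeneity argument is close in spirit to Case~(2) in the proof of Theorem \ref{thm:R1Speed}.

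Granting (a) and (b), the two directions are short. If $(W_n)$ is positive recurrent, standard renewal theory gives $\E_0[W_n]\to \overline{W}<\infty$, hence $B:=\sup_n\E_0[W_n]<\infty$; since $\Wt^{(N)}_x$ is distributed as $W_{N-x}$, monotone convergence yields $\E_\omega[L_x]=\lim_N\E_\omega[\Wt^{(N)}_x]\le B<\infty$, so $L_x<\infty$ a.s.\ for each $x$ and, by a countable union, for all $x$ simultaneously; by (a), $\Xt$ is transient to $+\infty$ a.s., and by (b), $\P_\omega(X_n\to\infty)>0$. Conversely, if $(W_n)$ is not positive recurrent then $W_N\to\infty$ in probability (a.s.\ if $(W_n)$ is transient; in the null recurrent case because $\P_0(W_N=j)\to0$ for every $j$ forces $\P_0(W_N<k)\to0$ for every $k$); since $\{\Wt^{(N)}_0\ge k\}\uparrow\{L_0\ge k\}$ and $\Wt^{(N)}_0$ is distributed as $W_N$, we get $\P_\omega(L_0\ge k)=1$ for every $k$, so $L_0=\infty$ a.s., i.e.\ $\Xt$ is not transient to $+\infty$; by the contrapositive of (b), $\P_\omega(Z_x>0,\ \forall x>0)=0$, hence $\P_\omega(X_n\to\infty)=0$. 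Throughout, the only inputs beyond elementary Markov chain theory are the bookkeeping identity and facts (a)--(b), all of which rest on the jump sequences at distinct sites being i.i.d.\ — which is precisely why $\omega$ is assumed constant on $[0,\infty)$.
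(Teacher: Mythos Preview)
Your overall strategy parallels the paper's, but there is one genuine gap. In the forward direction you assert that positive recurrence of $(W_n)$ gives $\E_0[W_n]\to\overline{W}<\infty$ and hence $B:=\sup_n\E_0[W_n]<\infty$. This is false in general: positive recurrence of an irreducible aperiodic chain on $\N_0$ yields only convergence \emph{in distribution} of $W_N$ to the stationary law $\pi$; the mean of $\pi$ may be infinite, and even when finite, $\E_0[W_n]$ need not converge to it without uniform integrability. So your bound $\E_\omega[L_x]\le B$ is unjustified, and the conclusion $L_x<\infty$ a.s.\ does not follow from your argument.

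The paper avoids this by working purely at the level of distributions. It observes that $\Wt^{(N)}_0$ is monotone nondecreasing in $N$ (obvious from the definition) with limit $L_0$, and that $\Wt^{(N)}_0\stackrel{d}{=}W_N$. Hence the law of $W_N$ converges to a proper distribution iff $L_0<\infty$ a.s., which is exactly transience of $\Xt$; and convergence to a proper limit is the standard characterization of positive recurrence for an irreducible aperiodic chain. No expectations are needed, and there is no need to track $L_x$ for $x>0$: your bookkeeping identity and the tail-$\sigma$-field argument for the $0$--$1$ law are unnecessary detours (the paper gets the $0$--$1$ law from a Lemma~\ref{lem:SimpleTransConditions}(iii)-type argument). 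Your reverse direction is fine and matches the paper's reasoning. You are also right that the bridge between $\Xt$ and $X$ (your fact (b)) is the step requiring the most care; the paper dismisses it as clear, and your sketch via the right-jumps chain is in the right spirit but, as you acknowledge, needs the walk to be chased out to a region where the environment is still fresh before Proposition~\ref{prop:SurvivalZxTransienceXn} applies.
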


\begin{proof}
Arguments exactly like the proof of part (iii) of Lemma \ref{lem:SimpleTransConditions} show that the
modified random walk $(\Xt_n)$ either has probability 1 of being transient to $+\infty$ or probability 1
of being recurrent, and clearly the former occurs if and only if the original random walk $(X_n)$ has a positive probability
of being transient to $+\infty$. Thus, it suffices to show that the left jumps Markov chain $(W_n)$ is positive recurrent
if and only if the modified random walk $(\Xt_n)$ is transient to $+\infty$.

Now, by construction of the left jumps Markov chain $(W_n)$, we know $W_N$ and $\Wt^{(N)}_0$ have
the same distribution for each $N > 0$, where $\Wt^{(N)}_0$ is the number of jumps of the modified random walk $(\Xt_n)$
from 0 to $-1$ before it first reaches $N$. Thus, the distribution of $W_N$ is stochastically
increasing, and it converges to a limiting finite distribution if and only if the modified random walk $(\Xt_n)$
is transient. On the other hand, since $(W_n)_{n \geq 0}$ is a (time-homogeneous) irreducible, aperiodic,
Markov chain, the distribution of $W_N$ converges to a finite limiting distribution if and only if this chain
is positive recurrent.
\end{proof}

We now use Proposition \ref{leftjumps} to prove Theorem \ref{RorL1}.

\begin{proof}[Proof of Theorem \ref{RorL1}]
By symmetry it suffices to treat the case $R=1$. In the statement of the theorem, it is
assumed that the initial environment is constant in a neighborhood of $+\infty$ in the negative feedback case.
For the proof, we will make this assumption even in the positive feedback case. This causes no problem
because in the positive feedback case if we can prove that the probability of transience to $+\infty$ is 0
for any constant environment then, by Lemma \ref{lem:ComparisonOfEnvironments}, it is also true for any
non-constant initial environment. Without loss of generality, we may assume also that the initial environment
is constant for all $x\ge0$.

In this case, by Proposition \ref{leftjumps}, it suffices to show the left jumps Markov chain $(W_n)$
is not positive recurrent. By construction of the left jumps chain we have
\begin{align*}
\P(W_n=\cdot|W_{n-1}=m) = \P(\Wt_x^{(N)}=\cdot|\Wt_{x+1}^{(N)}=m),
\end{align*}
where the right hand side is independent of $N$ and $x \in \{0,...,N-2\}$ (due to the assumption
on the initial environment). Now, if we condition on $\Wt_{x+1}^{(N)} = m$, it follows from
\eqref{W} and \eqref{Wtheta} that $\Wt_x^{(N)}$ is equal to the number of left jumps in the jump
sequence $(J_k^x)_{k \in \N}$ before the time of the $(m+1)$-th right jump.

Similarly to the analysis of the right jumps chain, we decompose $\Wt_x^{(N)}$ as
\begin{align*}
\Wt_x^{(N)} = \sum_{j = 1}^{m+1} V_j,
\end{align*}
where $V_j$ is the number of left jumps in the sequence $(J_k^x)_{k \in \N}$ between the $(j-1)$-th
and $j$-th right jumps. Since $R = 1$, the configuration at site $x$ is always $(p,0)$ immediately
after a right jump from site $x$. So, the starting configuration for each of
the ``left jump sessions'' after the first one is $(p,0)$, independent of the number of left jumps
in all previous sessions. It follows that the random variables $V_1,...,V_{m+1}$ are
independent and $V_2,...,V_{m+1}$ are i.i.d. with common distribution $V$, which is the
distribution of the number of left jumps from site $x$ before the first right jump,
starting in the $(p,0)$ configuration:
\begin{equation*}
\mathbb{P}(V=k)=\begin{cases} (1-p)^k p, \ 0 \leq k \leq L-1;\\
(1-p)^L(1-q)^{k-L}q,\ k\ge L.\end{cases}
\end{equation*}
(This is analogous to the situation $L=1$ for the right jumps Markov chain, where
$U(m) = \sum_{j = 1}^m \Gamma_j$ with $\Gamma_1,...,\Gamma_m$ independent
and $\Gamma_2, ... , \Gamma_m$ i.i.d.)

We now show that since $\alpha=\frac12$, $\mathbb{E}(V)=1$.
After a somewhat messy calculation and some algebraic simplification, one finds that
$$
\mathbb{E}(V)=\frac{(1-p)q+(1-p)^L(p-q)}{pq}.
$$
From this it follows that $\mathbb{E}(V)=1$ if and only if $q=\frac{p(1-p)^L}{2p-1+(1-p)^L}$.
Since $R = 1$ and $\alpha = 1/2$, we know that $q= q_0 = \frac{p(1-p)^L}{2p-1+(1-p)^L}$ by
Remark 2 after Proposition \ref{prop:PropertiesOfAlpha}. So, we conclude that $\mathbb{E}(V)=1$.

We have now shown that
\begin{align*}
\P(W_n=\cdot|W_{n-1}=m) = \P \Big(\sum_{j=1}^{m+1} V_j = \cdot \Big),
\end{align*}
where $V_1,...,V_{m+1}$ are independent and $V_2,...,V_{m+1}$ are i.i.d. with mean $1$.
So, the Markov chain $(W_n)$ has the transition probabilities of a critical branching process
with immigration. The immigration term $V_1$ depends on the initial environment,
but is always nonnegative and not identically zero with finite mean. Also, clearly $\E(V^2) < \infty$,
so the branching terms have finite variance. It thus follows from \cite{Seneta1970} that $\frac{W_n}n$
converges in law to a certain nonzero limiting distribution, which implies the
Markov chain $(W_n)_{n \geq 0}$ cannot be positive recurrent.
\end{proof}

\section{Analysis of $\alpha$}
\label{sec:AnalysisOfAlpha}

In this section we prove Proposition \ref{prop:PropertiesOfAlpha}, which
characterizes some properties of the important quantity
\begin{align}
\label{eq:AlphaFormula}
\alpha = p \cdot \pi_p + q \cdot \pi_q
\end{align}
that determines the direction of transience for our random walk (away from borderline critical case).
We recall from \eqref{eq:pippiq} that
\begin{align*}
\pi_p & =  \frac{(1-q)q^R(1 - (1-p)^L)}{(1-q)q^R(1 - (1-p)^L) + p(1-p)^L(1-q^R)} ~, \\
\pi_q & =  \frac{p(1-p)^L(1-q^R)}{(1-q)q^R(1 - (1-p)^L) + p(1-p)^L(1-q^R)}.
\end{align*}
The various pieces of the proposition will be proved separately, but we begin first with two useful observations.
\begin{itemize}
\item[(I)] For any fixed $q, R, L$ the quantity
\begin{align}
\label{eq:Ratio_pippiq}
\frac{\pi_p}{\pi_q} = \frac{(1-q)q^R}{1-q^R} \cdot \frac{1 - (1-p)^L}{p(1-p)^L}
\end{align}
satisfies $\lim_{p \rightarrow 1} \left( \frac{\pi_p}{\pi_q} \right) = \infty.$
Since $\pi_p + \pi_q = 1$, this implies $\lim_{p \to 1} \pi_p = 1$.

\item[(II)] For any fixed $q, R, L$ the quantity $\frac{\pi_p}{\pi_q}$ satisfies
\begin{align*}
\frac{d}{dp} \left( \frac{\pi_p}{\pi_q} \right) = \frac{(1-q)q^R}{1-q^R} \cdot \frac{p(L+1) + (1-p)^{L+1} - 1}{p^2 (1-p)^{L+1}} > 0 ~,~ \forall p \in (0,1).
\end{align*}
Since $\pi_p + \pi_q = 1$, this implies $\frac{d}{dp}(\pi_p) > 0$, $\forall p \in (0,1)$. So,
\begin{align}
\label{eq:DerivativeAlpha}
&\frac{d}{dp} (\alpha)
= \frac{d}{dp} (p \cdot \pi_p + q \cdot \pi_q) \nonumber =  \frac{d}{dp} (p \cdot \pi_p + q \cdot (1-\pi_p)) \nonumber \\
& = \pi_p + (p-q) \cdot \frac{d}{dp} (\pi_p) > 0, \mbox{ for all } p \geq q.
\end{align}
\end{itemize}

\noindent
\emph{Proof of (vi):}
By (I), $\lim_{p \to 1} \alpha = \lim_{p \to 1} (p \cdot \pi _p + q \cdot \pi q) = 1 \cdot 1 ~+~ q \cdot 0 = 1$. \\

\noindent
\emph{Proof of (i):}
This is immediate from (\ref{eq:AlphaFormula}) since $\pi_p + \pi_q = 1$ and $\pi_p, \pi_q > 0$, for any $p,q$. \\

\noindent
\emph{Proof of (ii):}
If $q < 1/2$, then $\alpha < 1/2$ for all $p \leq 1/2$, by (\ref{eq:AlphaFormula}).
But, by (II) and (vi), we also know that $\alpha(p)$ is monotonically increasing
on the interval $[1/2,1) \subset [q,1)$, with $\lim_{p \to 1} \alpha(p) = 1$.
Thus, the claim follows by continuity of $\alpha(p)$. \\

\noindent
\emph{Proof of (iii):}
Plugging $p = 1-q$ into  (\ref{eq:DefAlpha}) and simplifying one finds that
\begin{align*}
\alpha(1-q) < 1/2 & \iff q^R(1/2 - q) - q^L(1/2-q) < 0, \mbox{ and }\\
\alpha(1-q) > 1/2 & \iff q^R(1/2 - q) - q^L(1/2-q) > 0.
\end{align*}
Thus, for $q < 1/2$ and $R > L$, $\alpha(1-q) < 1/2$, which implies $p_0 > 1-q$. While, for
$q < 1/2$ and $R < L$, $\alpha(1-q) > 1/2$, which implies $p_0 < 1-q$.
This proves \eqref{eq:qPlusp0}.

Now, by (II) and symmetry considerations, for any fixed $R,L,p$ we know that
$d/dq(\alpha) > 0$ for $q \leq p$. Thus, for any $0 < q < q' < 1/2$, we have
\begin{align*}
\alpha(p_0(q,R,L),q',R,L) > \alpha(p_0(q,R,L),q,R,L) = 1/2,
\end{align*}
which implies $p_0(q',R,L) < p_0(q,R,L)$. So, $p_0$ is a decreasing
function of $q$, for $q \in (0,1/2)$. \\

\noindent
\emph{Proof of (iv):}
Plugging $L=1$ into (\ref{eq:DefAlpha}) and simplifying one finds that
\begin{align*}
\alpha = 1/2 \iff p(1 - 2q + q^R) = 1 - 2q + q^{R+1}
\end{align*}
and, similarly,
\begin{align*}
\alpha < 1/2 \iff p(1 - 2q + q^R) < 1 - 2q + q^{R+1}, \\
\alpha > 1/2 \iff p(1 - 2q + q^R) > 1 - 2q + q^{R+1}.
\end{align*}
(iv) follows by considering separately the two cases
$1 - 2q + q^{R+1} > 0$ and $1 - 2q + q^{R+1} \leq 0$. \\

\noindent
\emph{Proof of (v):}
If $L = R$, then plugging in $p = 1-q$ into (\ref{eq:DefAlpha}) gives $\alpha = 1/2$.
So, by (ii), if $q < 1/2$ then $p_0 = 1 - q$ is the unique critical point. On the the other hand,
for any $q > 1/2$, if $L = R$ is sufficiently large then there exists another critical point $p_0' > 1 - q$.
This follows from (vi), continuity of $\alpha$, and the following claim. \\

\noindent
\emph{Claim}: For any fixed $q > 1/2$, if $L = R$ is sufficiently large then $\frac{d}{dp}(\alpha) |_{p = 1-q} < 0$.
Thus, there exists some $\epsilon > 0$ such that $\alpha(1-q + \epsilon) < 1/2$. \\

\noindent
\emph{Proof}:
Computing $\frac{d}{dp}(\alpha)$ directly from (\ref{eq:DefAlpha}) and then substituting $L = R$ and $p = 1 - q$,
one finds, after some lengthy simplifications, that the condition $\frac{d}{dp}(\alpha) |_{p = 1-q} < 0$ is
equivalent to the condition
\begin{align*}
R(1-q)(1-2q) + q(1-q^R) < 0.
\end{align*}
For fixed $q > 1/2$, this condition is satisfied for all sufficiently large $R$.

\hfill $\square$

\appendix

\section{Solution of Linear Systems}
\label{sec:SolutionLinearSystems}

\subsection{Stationary Distribution of Single Site Markov Chains}
\label{subsec:StationaryDistributionSingleSiteMC}

Here we solve the linear system $\{ \pi = \pi M , \sum_{\lambda} \pi_\lambda = 1\}$ for the stationary distribution $\pi$
of the single site Markov chain transition matrix $M$. In expanded form this system becomes
\begin{align}
\label{eq:pi_piEqual}
& \pi_{(p,i)} = (1-p) \cdot \pi_{(p,i-1)} ~,~ 1 \leq i \leq L-1\\
\label{eq:pi_p0Equal}
& \pi_{(p,0)} = p \cdot \pi_p + q \cdot \pi_{(q,R-1)} \\
\label{eq:pi_qiEqual}
& \pi_{(q,i)} = q \cdot \pi_{(q, i -1)} ~,~ 1 \leq i \leq R-1 \\
\label{eq:pi_q0Equal}
& \pi_{(q,0)} = (1-q) \cdot \pi_q + (1-p) \cdot \pi_{(p,L-1)} \\
\label{eq:pipPluspiq_Equal1}
& \pi_p +  \pi_q = 1,
\end{align}
where $\pi_p = \sum_{i=0}^{L-1} \pi_{(p,i)}$ and $\pi_q = \sum_{i=0}^{R-1} \pi_{(q,i)}$.
Applying (\ref{eq:pi_piEqual}) and (\ref{eq:pi_qiEqual}) repeatedly gives
\begin{align}
\label{eq:pi_piEq}
\pi_{(p,i)} & = (1-p)^i \cdot \pi_{(p,0)} ~,~0 \leq i \leq L-1; \\
\label{eq:pi_qiEq}
\pi_{(q,i)} & = q^i \cdot \pi_{(q,0)} ~,~0 \leq i \leq R-1.
\end{align}
Hence,
\begin{align}
\label{eq:pi_pEq}
\pi_p &= \sum_{i=0}^{L-1} (1-p)^i \cdot \pi_{(p,0)} = \frac{1 - (1-p)^L}{p} \cdot \pi_{(p,0)}~, \\
\label{eq:pi_qEq}
\pi_q & = \sum_{i=0}^{R-1} q^i \cdot \pi_{(q,0)} = \frac{1 - q^R}{1 - q} \cdot \pi_{(q,0)}.
\end{align}
Plugging (\ref{eq:pi_qiEq}) and (\ref{eq:pi_pEq}) into (\ref{eq:pi_p0Equal}) gives
\begin{align*}
\pi_{(p,0)} = p \cdot \left( \frac{1 - (1-p)^L}{p} \cdot \pi_{(p,0)} \right) ~+~ q \cdot \left( q^{R-1} \cdot \pi_{(q,0)} \right),
\end{align*}
which implies
\begin{align}
\label{eq:pi_p0_Expression1}
\pi_{(p,0)} = \pi_{(q,0)} \cdot \frac{q^R}{(1-p)^L}.
\end{align}
But, by (\ref{eq:pipPluspiq_Equal1}), (\ref{eq:pi_pEq}), and (\ref{eq:pi_qEq}), we also have
\begin{align*}
\frac{1 - (1-p)^L}{p} \cdot \pi_{(p,0)} ~+~  \frac{1 - q^R}{1 - q} \cdot \pi_{(q,0)} = 1
\end{align*}
or, equivalently,
\begin{align}
\label{eq:pi_p0_Expression2}
\pi_{(p,0)} = \left(1 - \pi_{(q,0)} \frac{1 - q^R}{1-q} \right) \cdot \frac{p}{1 - (1-p)^L}.
\end{align}
Equating the right hand sides of (\ref{eq:pi_p0_Expression1}) and (\ref{eq:pi_p0_Expression2})
and solving for $\pi_{(q,0)}$ gives
\begin{align*}
\pi_{(q,0)} = \frac{p(1-q)(1-p)^L}{(1-q)q^R(1 - (1-p)^L) + p(1-p)^L(1-q^R)}.
\end{align*}
Substituting this value of $\pi_{(q,0)}$ into \eqref{eq:pi_p0_Expression1} gives an explicit expression
for $\pi_{(p,0)}$, and the values of $\pi_{(q,i)}, 1 \leq i \leq R-1$, and $\pi_{(p,i)}, 1 \leq i \leq L-1$, are then
easily found by substituting the expressions for $\pi_{(p,0)}$ and $\pi_{(q,0)}$ in (\ref{eq:pi_piEq}) and (\ref{eq:pi_qiEq}),
giving \eqref{eq:pipi_piqi}.

\subsection{Expected Hitting Times with $R=1$}
\label{subsec:ExpectedHittingTimes}

Here we solve the linear system (\ref{eq:LinearSystemExpectedHittingTimes}) for the expected
hitting times $a_i$, $0 \leq i \leq L$. As shown in the proof of Theorem \ref{thm:R1Speed},
using soft methods, these expected hitting times must all be finite.

For simplicity of notation we define $b_i = a_{L-i}$, $0 \leq i \leq L$. Rearranging slightly
the system (\ref{eq:LinearSystemExpectedHittingTimes}) then becomes
\begin{align*}
b_{i+1} & = 1 + (1-p)(a_0 + b_i) ~,~0 \leq i \leq L-1 \\
b_0 & = \frac{1}{q} + \left(\frac{1-q}{q}\right) a_0.
\end{align*}
Thus, for each $0 \leq i \leq L$, we have
\begin{align*}
b_i = u_i + v_i \cdot a_0
\end{align*}
where the sequences $(u_i)_{i=0}^L$ and $(v_i)_{i=0}^L$ are defined recursively by
\begin{align*}
u_0 & = 1/q ~~\mbox{ and }~~ u_{i+1} = 1 + (1-p)u_i,~  0 \leq i \leq L-1,\\
v_0 & = (1-q)/q ~~\mbox{ and }~~ v_{i+1} = (1-p)(1 + v_i),~  0 \leq i \leq L-1.
\end{align*}
By induction on $i$, we find that, for each $1 \leq i \leq L$,
\begin{align*}
u_i & = \frac{(1-p)^i}{q} + \sum_{j = 0}^{i-1} (1-p)^j =  \frac{1 + (p/q - 1)(1-p)^i}{p}~, \\
v_i & = \frac{(1-p)^i}{q} + \sum_{j = 1}^{i-1} (1-p)^j = \frac{1 - p + (p/q - 1)(1-p)^i}{p}.
\end{align*}
Substituting, first for the $b_i$'s and then for the $a_i$'s with
$a_i = b_{L-i}$, one obtains (\ref{eq:Def_a0}) and (\ref{eq:Def_ai}).

\section{Proof of Lemma \ref{lem:SimpleTransConditions}}
\label{sec:BasicTransienceConditions}

Here we prove Lemma \ref{lem:SimpleTransConditions} from section \ref{subsec:BasicLemmas}.
The three parts are proved separately. In each case, we prove only the first of the two statements,
since the second follows by symmetry. The following notation will be used for the proofs.
\begin{itemize}
\item $T_x^{(i)}$ is the $i$-th hitting time of site $x$:
\begin{align*}
T_x^{(1)} = T_x ~~\mbox{ and }~~ T_x^{(i+1)} = \inf\{n > T_x^{(i)}: X_n = x\},
\end{align*}
with the convention $T_x^{(j)} = \infty$, for all $j > i$, if $T_x^{(i)} = \infty$.
\item $m_i = \sup \{ X_n : n \leq T_0^{(i)} \}$ is the maximum position of the random walk
up to the $i$-th hitting time of site $0$.
\item For an initial environment $\omega$ and path $\zeta = (x_0,...,x_n)$,
$\omega^{(\zeta)}$ is the environment induced at time $n$ by
following the path $\zeta$ starting in $\omega$:
\begin{align*}
\{\omega_0 = \omega, X_0 = x_0,...,X_n = x_n\} \implies \omega_n = \omega^{(\zeta)}.
\end{align*}
\end{itemize}

\noindent
\emph{Proof of (ii):}
Clearly, $\P_{\omega}(X_n \rightarrow \infty) \leq \P_{\omega}(\liminf_{n \to \infty} X_n > -\infty)$.
To show the reverse inequality also holds observe that, for any $k \in \Z$,
$\P_{\omega}(\liminf_{n \to \infty} X_n = k) = 0$. Thus,
\begin{align*}
\P_{\omega}\left(\liminf_{n \to \infty} X_n > -\infty, X_n \not\rightarrow \infty \right)
= \P_{\omega}\left(-\infty< \liminf_{n \to \infty} X_n <\infty \right) = 0.
\end{align*}

\noindent
\emph{Proof of (i):}
By (ii), $\P_{\omega}(X_n \rightarrow \infty) \geq \P_{\omega}(\AM_0^+)$. Thus,
$\P_{\omega}(X_n \rightarrow \infty) > 0$, if $\P_{\omega}(\AM_0^+) > 0$.
On the other hand, if $\P_{\omega}(X_n \rightarrow \infty) > 0$ then there exists
some finite path $\zeta = (x_0,...,x_n)$, such that $x_ 0 = 0$, $x_n = 2$, and
\begin{align*}
\P_{\omega}(X_m > 1, \forall m \geq n|X_0 = x_0, ..., X_n = x_n) > 0.
\end{align*}
We construct from $\zeta = (x_0,...,x_n)$ the reduced path $\zetat = (\xt_0,...,\xt_{\nt})$ by setting $\xt_0 = x_0 = 0$,
and then removing from the tail $(x_1,...,x_n)$ all steps before the first hitting time of site $1$ and all
steps in any leftward excursions from site $1$. For example,
\begin{align*}
\mbox{ if } \zeta & = (0,\textbf{-1},\textbf{0},1,2,1,\textbf{0},\textbf{1}, 2, 1, \textbf{0} ,\textbf{-1},\textbf{-2},\textbf{-1},\textbf{-2},\textbf{-1},\textbf{0},\textbf{1},2,3,2), \\
\mbox{ then } \zetat & = (0,1,2,1,2,1,2,3,2)
\end{align*}
(where we denote the removed steps in bold for visual clarity). By construction,
$\omega^{(\zetat)}(x) = \omega^{(\zeta)}(x)$, for all $x \geq 2$.
So, $\P_{\omega}(X_m > 1, \forall m \geq \nt|(X_0, ..., X_{\nt}) = \zetat)
= \P_{\omega}(X_m > 1, \forall m \geq n|(X_0,...,X_n) = \zeta) > 0$.
Thus,
\begin{align*}
\P_{\omega}(\AM_0^+) \geq \P_{\omega}( (X_0,...,X_{\nt}) = \zetat) \cdot \P_{\omega}(X_m > 1, \forall m \geq \nt|(X_0,...,X_{\nt}) = \zetat) > 0.
\end{align*}

\noindent
\emph{Proof of (iii):}
Since we assume $\P_{\omega}(X_n \rightarrow - \infty) = 0$, it follows from (ii) that (a) $T_x$ is $\P_{\omega}$ a.s.
finite for each $x \geq 0$, and (b) every time the random walk steps left from a site $x$ it will eventually return with
probability $1$. Now (b) implies that the probability that the walk is transient to $+\infty$, after first hitting a site
$x \geq 0$, is independent of the trajectory taken to get to $x$. That is,
$\P_{\omega}(X_n\rightarrow \infty|(X_0,...,X_n) = \zeta) = \P_{\omega}(X_n\rightarrow \infty|T_x < \infty)$,
for any $x \geq 0$ and path $\zeta = (x_0, ... ,x_n)$ such that $x_0 = 0, x_n = x$, and $x_m < x$ for $m < n$.
Combining this last observation with (a) shows that
\begin{align*}
& \P_{\omega}(X_n\rightarrow \infty|T_0^{(i)} < \infty, m_i = x)
= \P_{\omega}(X_n \rightarrow \infty|T_0^{(i)} < \infty, m_i = x, T_{x+1} < \infty) \nonumber \\
&= \P_{\omega}(X_n \rightarrow \infty|T_{x+1} < \infty)
= \P_{\omega}(X_n \rightarrow \infty), \mbox{ for all $x \geq 0$ and $i \geq 1$. }
\end{align*}
So, $\P_{\omega}(X_n\rightarrow \infty|T_0^{(i)} < \infty) = \P_{\omega}(X_n\rightarrow \infty)$, for all $i \geq 1$.
Thus, by (ii),
\begin{align*}
\P_{\omega}(X_n \not\rightarrow \infty)
= \P_{\omega}(X_n \not\rightarrow \infty|T_0^{(i)} < \infty)
= \prod_{j=i}^{\infty} \P_{\omega}(T_0^{(j+1)} < \infty|T_0^{(j)} < \infty), \forall i \geq 1.
\end{align*}
Since the LHS is independent of $i$, the product on the RHS is constant for $i \geq 1$. Thus, there are two possibilities: either the
product is $0$ (for all $i \geq 1$) or $\P_{\omega}(T_0^{(j+1)} < \infty|T_0^{(j)} < \infty) = 1$, for all $j \geq 1$. In the latter case,
$\P_{\omega}(X_n \not\rightarrow \infty) = 1$, which contradicts the assumption that $\P_{\omega}(X_n \rightarrow \infty) > 0$.
In the former case, $\P_{\omega}(X_n \rightarrow \infty) = 1$, as required. 

\hfill $\square$

\section{Proof of Lemma \ref{lem:StrongLawForNx}}
\label{sec:StrongLawForNx}

The following strong law for sums of dependent random variables is a special
case of  \cite[Theorem 1]{Etemadi1983}  with $w_i = 1$ and $W_i = i$.

\begin{The}
\label{thm:StrongLawDepedentRVs}
Let $(\xi_i)_{i \in \N}$ be a sequence of nonnegative random variables satisfying:
\begin{enumerate}
\item $\sup_i \E(\xi_i) < \infty$.
\item $\E(\xi_i^2) < \infty$, for each $i$.
\item $\sum_{j = 1}^{\infty} \sum_{i=1}^j \frac{1}{j^2} \cdot Cov^+(\xi_i,\xi_j) < \infty$.
\end{enumerate}
Then
\begin{align*}
\frac{1}{n} \sum_{i=1}^n (\xi_i - \E(\xi_i)) \stackrel{a.s.}{\longrightarrow} 0, \mbox{ as } n \rightarrow \infty.
\end{align*}
\end{The}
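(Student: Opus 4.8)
The plan is to reconstruct this as a truncation‑free Etemadi‑type argument (one could instead simply quote \cite[Thm.~1]{Etemadi1983} with $w_i\equiv1$, $W_i=i$, but here is the direct route): control the centred partial sums along a geometric subsequence via a second‑moment estimate, then interpolate to all $n$ using the monotonicity of $\sum_i\xi_i$. Write $\bar S_n=\sum_{i=1}^n\xi_i$ and $S_n=\bar S_n-\E(\bar S_n)=\sum_{i=1}^n(\xi_i-\E\xi_i)$, so the assertion is $S_n/n\to0$ a.s. By hypothesis (2) each $\Var(\xi_i)$ is finite and each $|\Cov(\xi_i,\xi_j)|\le\sqrt{\Var(\xi_i)\Var(\xi_j)}<\infty$; the diagonal terms of hypothesis (3) give $\sum_i\Var(\xi_i)/i^2<\infty$ (since $\Cov^+(\xi_i,\xi_i)=\Var(\xi_i)$), and hypothesis (3) itself supplies
\[
\sum_{i\ge1}\frac{\Var(\xi_i)}{i^2}<\infty
\qquad\text{and}\qquad
\sum_{j\ge1}\frac{1}{j^2}\sum_{i=1}^{j}\Cov^+(\xi_i,\xi_j)<\infty .
\]

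First I would fix $\theta>1$ and set $n_k=\floor{\theta^k}$. Since $\E(S_{n_k})=0$, Chebyshev gives $\P(|S_{n_k}|>\epsilon n_k)\le \Var(S_{n_k})/(\epsilon^2 n_k^2)$, and the elementary bound $\Var(S_{n_k})\le\sum_{i=1}^{n_k}\Var(\xi_i)+2\sum_{1\le i<j\le n_k}\Cov^+(\xi_i,\xi_j)$ holds. Summing over $k$, interchanging the order of summation (all summands nonnegative), and using the geometric‑tail estimate $\sum_{k:\,n_k\ge m}n_k^{-2}\le C_\theta m^{-2}$, one gets
\[
\sum_{k\ge1}\frac{\Var(S_{n_k})}{n_k^2}
\le C_\theta\Big(\sum_{i\ge1}\frac{\Var(\xi_i)}{i^2}+2\sum_{j\ge1}\frac1{j^2}\sum_{i<j}\Cov^+(\xi_i,\xi_j)\Big)<\infty .
\]
Hence $\sum_k\P(|S_{n_k}|>\epsilon n_k)<\infty$ for every $\epsilon>0$, so by Borel--Cantelli (applied with $\epsilon=1/m$, $m\in\N$, and the events intersected) $S_{n_k}/n_k\to0$ a.s.

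Next comes the interpolation. Put $C=\sup_i\E\xi_i<\infty$ by hypothesis (1), so $0\le\E\xi_i\le C$ for all $i$. For $n_k\le n\le n_{k+1}$, monotonicity of $n\mapsto\bar S_n$ gives $\bar S_{n_k}\le\bar S_n\le\bar S_{n_{k+1}}$, while $|\E(\bar S_n)-\E(\bar S_{n_k})|\le C(n_{k+1}-n_k)$ and the same with $n_{k+1}$ in place of $n_k$. Combining, $S_{n_k}-C(n_{k+1}-n_k)\le S_n\le S_{n_{k+1}}+C(n_{k+1}-n_k)$; dividing by $n$ and using $n_{k+1}/n_k\to\theta$, $(n_{k+1}-n_k)/n_k\to\theta-1$, and $S_{n_k}/n_k\to0$, gives
\[
-C(\theta-1)\le\liminf_{n\to\infty}\frac{S_n}{n}\le\limsup_{n\to\infty}\frac{S_n}{n}\le C(\theta-1)\quad\text{a.s.}
\]
Intersecting the full‑measure events over $\theta=1+1/m$, $m\in\N$, and letting $m\to\infty$ yields $S_n/n\to0$ a.s.

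I expect the work to be routine bookkeeping rather than a genuine difficulty. The one delicate point is the interpolation step: the centred sums $S_n$ are not monotone, so one must exploit the monotonicity of the raw sums $\bar S_n=\sum_i\xi_i$ and then use the uniform bound $\E\xi_i\le C$ from hypothesis (1) to absorb the gaps $n_{k+1}-n_k=O(n_k)$; everything else is the standard Chebyshev/Borel--Cantelli scheme along a geometric subsequence, with the summability over $k$ reduced to the two hypotheses by a Tonelli interchange and a geometric‑series bound.
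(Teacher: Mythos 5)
The paper does not actually prove this theorem --- it simply cites it as a special case of \cite[Theorem 1]{Etemadi1983} with $w_i=1$ and $W_i=i$. Your direct argument therefore fills a gap that the paper delegates to the reference, and as far as I can tell it is correct. The method is the classical Etemadi subsequence scheme: bound $\Var(S_{n_k})\le\sum_{i\le n_k}\Var(\xi_i)+2\sum_{i<j\le n_k}\Cov^+(\xi_i,\xi_j)$, then Chebyshev along $n_k=\lfloor\theta^k\rfloor$; the summability $\sum_k\Var(S_{n_k})/n_k^2<\infty$ is reduced to hypothesis (3) via Tonelli and the geometric-tail estimate $\sum_{k:\,n_k\ge m}n_k^{-2}=O(m^{-2})$ (noting that the diagonal of (3) is exactly $\sum_j\Var(\xi_j)/j^2$ since $\Cov^+(\xi_j,\xi_j)=\Var(\xi_j)$); Borel--Cantelli then gives $S_{n_k}/n_k\to0$ a.s.; and the interpolation exploits that the \emph{raw} sums $\bar S_n$ are monotone (nonnegativity of the $\xi_i$) together with $\E\xi_i\le C$ from hypothesis (1) to control the gaps by $C(n_{k+1}-n_k)=O((\theta-1)n_k)$, after which $\theta\downarrow1$ finishes. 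This is in substance the mechanism behind Etemadi's theorem, so it is not a different \emph{technique}, but your write-up makes the hypothesis-tracking transparent --- (2) and the diagonal of (3) control the variance sum, the off-diagonal of (3) controls the positive covariances, and (1) enters only in the interpolation step, which is precisely where the subsequence argument would otherwise break for non-monotone centred sums. Two minor points worth a sentence if you polish this: $n_k=\lfloor\theta^k\rfloor$ can repeat for small $k$ when $\theta$ is close to $1$, which is harmless but should be acknowledged; and the sandwich $\frac{S_{n_k}}{n}-\frac{C(n_{k+1}-n_k)}{n_k}\le\frac{S_n}{n}\le\frac{S_{n_{k+1}}}{n}+\frac{C(n_{k+1}-n_k)}{n_k}$ needs the extra one-line remark that $|S_{n_k}|/n\le|S_{n_k}|/n_k\to0$ and $|S_{n_{k+1}}|/n\le(n_{k+1}/n_k)\,|S_{n_{k+1}}|/n_{k+1}\to0$ before passing to the $\liminf/\limsup$.
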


Using this theorem we will prove Lemma \ref{lem:StrongLawForNx}. Throughout our proof
the initial environment $\omega$ is fixed, and all random variables are distributed according
to the measure $\P_{\omega}$, which we will abbreviate simply as $\P$. Also, $\beta > 0$ is
the constant given in Corollary \ref{cor:PrAnPlusGreaterEqualBeta}.

\begin{proof}[Proof of Lemma \ref{lem:StrongLawForNx}]

By Corollary \ref{cor:NxDominatedByGeometric},
\begin{align}
\label{eq:ENxVarNxBounds}
\E(N_x) \leq \frac{1}{\beta} ~~\mbox{ and }~~ \E(N_x^2) \leq \frac{2 - \beta}{\beta^2}~,~ \mbox{ for each $x \in \N$}.
\end{align}
Thus, by Theorem \ref{thm:StrongLawDepedentRVs}, it suffices to show that
\begin{align*}
\sum_{y = 1}^{\infty} \sum_{x = 1}^{y} ~ \frac{1}{y^2} Cov^{+} (N_x,N_y) < \infty.
\end{align*}

Since $N_x$ and $N_y$ are nonnegative integer valued random variables, $Cov(N_x,N_y)$
can be represented as the following absolutely convergent double sum:
\begin{align}
\label{eq:CovNxNySumExpression}
Cov(N_x,N_y) = \sum_{j = 1}^{\infty} \sum_{k = 1}^{\infty} \Big( \P(N_x \geq k, N_y \geq j) - \P(N_x \geq k) \P(N_y \geq j) \Big).
\end{align}
To bound this sum we will need the following two estimates for the
differences $D_{k,j} \equiv \P(N_x \geq k, N_y \geq j) - \P(N_x \geq k) \P(N_y \geq j)$:
\begin{align}
\label{eq:jkEstimate}
& \mbox{ For any $1 \leq x < y$ and $k,j \in \N$}, D_{k,j} \leq (1-\beta)^{\max\{j,k\}-1}. \\
\label{eq:xyEstimate}
& \mbox{ For any $1 \leq x < y$ and $k,j \in \N$}, D_{k,j} \leq (1 - \beta)^{y-x}.
\end{align}
\eqref{eq:jkEstimate} follows from Corollary \ref{cor:NxDominatedByGeometric}:
\begin{align*}
D_{k,j} & \equiv \P( N_x \geq k , N_y  \geq j) - \P(N_x \geq k) \P(N_y \geq j) \\
& \leq \P(N_x \geq k, N_y \geq j)
\leq \min\{ \P(N_x \geq k), \P(N_y \geq j) \}
\leq (1-\beta)^{\max\{j,k\}-1}.
\end{align*}
To see \eqref{eq:xyEstimate} recall that $N_x^y$ and $N_y$ are independent
for all $1 \leq x < y$, by Lemma \ref{lem:SimpleConsequencesAlphaGreaterHalf}.
Thus, for any $1 \leq x < y$, we have
\begin{align*}
\P(N_x \geq k, N_y \geq j)
& = \P(N_x^y \geq k, N_y \geq j) + \P(N_x^y < k, N_x \geq k, N_y \geq j) \\
& =  \P(N_x^y \geq k) \P(N_y \geq j) + \P(N_x^y < k, N_x \geq k, N_y \geq j) \\
& \leq \P(N_x \geq k) \P(N_y \geq j) + \P(B_y \geq y - x) \\
& \leq \P(N_x \geq k) \P(N_y \geq j) + (1 - \beta)^{y - x}
\end{align*}
by Corollary \ref{cor:BxDominatedByGeometric}.

Now, for given $1 \leq x < y$, let $n = y - x$ and let $N = \floor{(1-\beta)^{-n/4}}$. Breaking the (absolutely convergent)
double sum in (\ref{eq:CovNxNySumExpression}) into pieces and applying Fubini's Theorem gives
\begin{align*}
Cov(N_x,N_y)
& = \sum_{j=1}^N \sum_{k = 1}^N D_{k,j}
~+~ \sum_{j=1}^N \sum_{k = N+1}^{\infty} D_{k,j}
~+~ \sum_{k=1}^N \sum_{j = N+1}^{\infty} D_{k,j} \\
&~~+  \sum_{k=N+1}^{\infty} \sum_{j = k}^{\infty} D_{k,j}
~+~ \sum_{j=N+1}^{\infty} \sum_{k = j+1}^{\infty} D_{k,j}.
\end{align*}
By \eqref{eq:xyEstimate}, the first term on the RHS of this equation is bounded above by \\
$N^2 (1-\beta)^n$. Similarly, by \eqref{eq:jkEstimate}:
\begin{itemize}
\item The second term is bounded by $N \cdot \sum_{k=N+1}^{\infty} (1-\beta)^{k-1} = N (1-\beta)^N/\beta$.
\item The third term is bounded by $N \cdot \sum_{j=N+1}^{\infty} (1-\beta)^{j-1} = N (1-\beta)^N/\beta$.
\item The fourth term is bounded by $\sum_{k=N+1}^{\infty} \sum_{j = k}^{\infty} (1-\beta)^{j-1} = (1 - \beta)^N/\beta^2$.
\item The fifth term is bounded by $\sum_{j=N+1}^{\infty} \sum_{k = j+1}^{\infty} (1-\beta)^{k-1} = (1 - \beta)^{N+1}/\beta^2$.
\end{itemize}
The upper bound on the first term is at most $(1-\beta)^{n/2}$, and the same is also true for the upper bounds
on each of the other 4 terms for all sufficiently $n$, since $N$ grows exponentially in $n$. Thus, there exists some
$n_0 \in \N$ such that
\begin{align*}
Cov(N_x,N_y) \leq 5 (1-\beta)^{n/2}, \mbox{ whenever } y - x =  n \geq n_0.
\end{align*}
But, for any $1 \leq x \leq y$ such that $y - x = n < n_0$ we also have
\begin{align*}
Cov(N_x,N_y) \leq \E(N_x^2)^{1/2} \cdot \E(N_y^2)^{1/2} \leq \frac{2 - \beta}{\beta^2}
\leq \left( \frac{2 - \beta}{\beta^2 (1-\beta)^{n_0 - 1}} \right) (1-\beta)^n
\end{align*}
by \eqref{eq:ENxVarNxBounds}. Thus, for all $1 \leq x \leq y$,
\begin{align*}
Cov(N_x,N_y) \leq C (1 - \beta)^{n/2}, \mbox{ where } C \equiv \max \left\{5, \frac{2 - \beta}{\beta^2 (1-\beta)^{n_0 - 1}}\right\} \mbox{ and } n = y -x.
\end{align*}
So,
\begin{align*}
\sum_{y = 1}^{\infty} \sum_{x = 1}^{y} ~ \frac{1}{y^2} Cov^{+} (N_x,N_y)
\leq \sum_{y = 1}^{\infty} \sum_{x = 1}^{y} ~ \frac{1}{y^2} \cdot C(1-\beta)^{(y-x)/2}
< \infty.
\end{align*}
\end{proof}

\section{Proofs of Lemmas \ref{lem:muEqual1}, \ref{lem:ConcentrationEstimate}, and \ref{lem:nuLowerBound}}
\label{sec:ProofOfUnLemmas}

\begin{proof}[Proof of Lemma \ref{lem:muEqual1}] 
Since $U(n) = \sum_{j = 1}^n \Gamma_j$, if follows from the Markov chain representation
of section \ref{subsec:StepDistributionZxChain} and the ergodic theorem for finite-state 
Markov chains along with \eqref{eq:GammajDist} that
\begin{align*}
\lim_{n \to \infty} \frac{\E(U(n))}{n} = \lim_{j \to \infty} \E(\Gamma_j) = \left<\psi,E \right>
~\mbox{ and }~
\lim_{n \to \infty} \frac{1}{n} \sum_{j = 1}^n \Gamma_j = \left<\psi,E \right>,~ \mbox{ a.s. }
\end{align*}
By definition, $\Gamma_j$ is the number of right jumps (i.e. $1$'s) in the jump sequence $(J_k^x)_{k \in \N}$
between  the $(j-1)$-th and $j$-th left jumps. So, this implies
\begin{align*}
\lim_{m \to \infty} \frac{1}{m} \sum_{k=1}^m \indicator\{J_k^x = 1\}
= \lim_{n \to \infty} \left( \frac{ \sum_{j=1}^n \Gamma_j }{n +  \sum_{j=1}^n \Gamma_j} \right)
= \frac{ \left<\psi,E \right> }{1 + \left<\psi,E \right> } ~,\mbox{ a.s. }
\end{align*}
On the other hand, as noted at the end of section \ref{subsubsec:SingleSiteMarkovChain},
\begin{align*}
\lim_{m \to \infty} \frac{1}{m} \sum_{k = 1}^m \indicator\{J_k^x = 1\} = \alpha ~, \mbox{ a.s. }
\end{align*}
Since $\alpha = 1/2$, it follows that $\left<\psi,E \right> = 1$.
\end{proof}

\begin{proof}[Proof of Lemma \ref{lem:nuLowerBound}]
We consider separately the cases $L = 1$ and $L \geq 2$. 
In both cases, since $\alpha = 1/2$ we have $\mu = 1$, 
by Lemma  \ref{lem:muEqual1}. Thus, 
$\nu(n) = \E [ (U(n)-n)^2 ]/n$. \\

\noindent
\emph{Case 1}: $L = 1$.\\
In this case $\omega^j = (q,0)$ for all $j \geq 2$, regardless of the values of the $\Gamma_j$'s. 
Thus, $\Gamma_1, ..., \Gamma_n$ are independent and $\Gamma_2,...,\Gamma_n$ are i.i.d. distributed as $S_0$. So, 
\begin{align*}
\liminf_{n \to \infty} \nu(n) 
= \liminf_{n \to \infty} \frac{\E [ (U(n)-n)^2 ]}{n} 
\geq \liminf_{n \to \infty} \frac{\Var(U(n))}{n}
= \Var(S_0) > 0.
\end{align*} 

\noindent
\emph{Case 2}: $L \geq 2$.\\
By construction $\omega^{j+1}$ is a deterministic function of $\omega^j$ and $\Gamma_j$. 
For $\lambda, \lambda' \in \Lambda$, we define $K_{\lambda,\lambda'} = 
\{k \geq 0: \omega^{j+1} = \lambda', \mbox{ if } \omega^j = \lambda \mbox{ and } \Gamma_j = k \}$.
We say a sequence of configurations $\vec{\lambda} = (\lambda_1,...,\lambda_{n+1}) \in \Lambda^{n+1}$ 
is \emph{allowable} if $|K_{\lambda_i, \lambda_{i+1}}| > 0$ for all $1 \leq i \leq n$, and denote 
by $G_{n+1}$ the set of all allowable length-$(n+1)$ configuration sequences. For each allowable configuration 
sequence $\vec{\lambda} \in G_{n+1}$ we define $(\Gamma_{j,{\vec{\lambda}}})_{j=1}^n$ to be independent random 
variables with distribution 
\begin{align*}
\P(\Gamma_{j,\vec{\lambda}} = k) 
& = \P(\Gamma_j = k|\omega^j = \lambda_j, \omega^{j+1} = \lambda_{j+1}) \\
& =  \P(\Gamma_j = k|\omega^j = \lambda_j,  \Gamma_j \in K_{\lambda_j, \lambda_{j+1}}).
\end{align*}
Also, we define $U_{\vec{\lambda}}(n) = \sum_{j=1}^n \Gamma_{j,\vec{\lambda}}$. 

By construction of the joint process $(\omega^j,\Gamma_j)$, it follows that $U(n)$ conditioned on 
$(\omega^1,...,\omega^{n+1}) = \vec{\lambda}$ is distributed as $U_{\vec{\lambda}}(n)$.
Thus, denoting $\vec{\omega} = (\omega^1,...,\omega^{n+1})$, we have 
\begin{align} 
\label{eq:ConditionalSumNun}
\E[(U(n)-n)^2] 
& = \sum_{\vec{\lambda} \in G_{n+1}} \P(\vec{\omega} = \vec{\lambda}) \cdot \E[(U(n)-n)^2 | \vec{\omega} = \vec{\lambda}] \nonumber \\
& = \sum_{\vec{\lambda} \in G_{n+1}} \P(\vec{\omega} = \vec{\lambda}) \cdot \E[(U_{\vec{\lambda}}(n)-n)^2] \nonumber \\
& \geq \sum_{\vec{\lambda} \in G_{n+1}} \P(\vec{\omega} = \vec{\lambda}) \cdot \Var( U_{\vec{\lambda}}(n) ) \nonumber \\
& =  \sum_{\vec{\lambda} \in G_{n+1}} \P(\vec{\omega} = \vec{\lambda}) \sum_{j=1}^n \Var( \Gamma_{j,\vec{\lambda}} ).
\end{align}
The lemma follows easily from this since the pair $((p,1),(p,1))$ is a recurrent state for the Markov chain 
over configuration pairs $(\omega^j,\omega^{j+1})_{j \in \N}$ and the distribution of $\Gamma_j$ conditioned on 
$\omega^j = \omega^{j+1} = (p,1)$ is non-degenerate. Indeed, denoting the variance in the distribution of
$\Gamma_j$ conditioned on $\omega^j = \omega^{j+1} = (p,1)$ as $V_{(p,1),(p,1)}$ and the stationary probability 
of the pair $((p,1),(p,1))$ as $\psi_{(p,1),(p,1)}$, \eqref{eq:ConditionalSumNun} implies
\begin{align*}
\liminf_{n \to \infty} \nu(n) = \liminf_{n \to \infty} \frac{\E[(U(n)-n)^2]}{n} \geq V_{(p,1),(p,1)} \cdot \psi_{(p,1),(p,1)} > 0. 
\end{align*}
\end{proof}

We now proceed to the proof of Lemma \ref{lem:ConcentrationEstimate}. This is based on the following 
basic facts concerning large deviations of i.i.d. random variables and finite-state Markov chains: \\

\noindent
\emph{Fact 1}: If $\xi$ is a random variable with exponential tails and $\xi_1, \xi_2,...$ are i.i.d. random variables
distributed as $\xi$, then there exist constants $b_1, b_2 > 0$ such that the empirical means 
$\xib_n \equiv \frac{1}{n} \sum_{i=1}^n \xi_i$ satisfy:
\begin{align}
\label{eq:iidSmallEpsBound}
\P( |\xib_n - \E(\xi)| > \epsilon ) & \leq b_1 \exp(-b_2 \epsilon^2 n) ~,~ \mbox{for all } 0 < \epsilon \leq 1 \mbox{ and } n \in \N; \\
\label{eq:iidLargeEpsBound}
\P( |\xib_n - \E(\xi)| > \epsilon ) & \leq b_1 \exp(-b_2 \epsilon n) ~,~ \mbox{for all } \epsilon > 1 \mbox{ and } n \in \N. 
\end{align}

\noindent
\emph{Fact 2}: If $(\xi_n)_{n \in \N}$ is an irreducible Markov chain on a finite state space $S$ with 
stationary distribution $\phi$, then there exist constants $b_1, b_2 > 0$ such that the empirical state frequencies 
$\phi_n(s) \equiv \frac{1}{n} \sum_{i = 1}^n \indicator\{\xi_i = s\}$ satisfy
\begin{align*}
\P_{s'}( |\phi_n(s) - \phi(s)| > \epsilon ) & \leq b_1 \exp(-b_2 \epsilon^2 n) ~,~ \mbox{for all } s, s' \in S,~ \epsilon > 0, \mbox{ and } n \in \N.
\end{align*}
Here $\P_{s'}(\cdot) \equiv \P(\cdot | \xi_1 = s')$ is the probability measure for the Markov chain $(\xi_n)$ started from state $s'$. \\
 
Fact 1 can be proved using the standard Chernoff-Hoeffding method for establishing large deviation bounds
of independent random variables. Fact 2 follows from Fact 1, since for a finite-state, irreducible Markov chain 
the return times to a given state are i.i.d. with exponential tails. 
 
\begin{proof}[Proof of Lemma \ref{lem:ConcentrationEstimate}] 
Throughout the proof we assume $\omega(x) = \lambda_0$, $x \geq 0$, for some
$\lambda_0 \in \Lambda_0 = \{(p,1),...,(p,L-1),(q,0)\}$. The result for general $\lambda \in \Lambda$
follows directly from this since, for any initial state $\lambda \in \Lambda$, the Markov chain 
$(\omega^j)_{j \in \N}$ collapses to the recurrent state set $\Lambda_0$ with probability 1 
after a single transition and the random variable $\Gamma_1$ has an exponential tail. 

The bounds for small $\epsilon$ and large $\epsilon$ are established separately. Specifically, we will show 
that there exist constants $c_1, c_2, \epsilon_0 > 0$ and other constants $c_1', c_2', \epsilon_0' > 0$ such that 
the empirical means $\Gammab_n \equiv \frac{1}{n} \sum_{j=1}^n \Gamma_j$ satisfy:
\begin{align}
\label{eq:GammabSmallEpsilon}
& \P(|\Gammab_n - 1| > \epsilon) \leq c_1 \exp(- c_2 \epsilon^2 n) ~,~ \mbox{for all } 0 < \epsilon \leq \epsilon_0 \mbox{ and } n \in \N, \\
\label{eq:GammabLargeEpsilon}
& \P(|\Gammab_n - 1| > \epsilon) \leq c_1' \exp(- c_2' \epsilon n) ~,~ \mbox{for all } \epsilon \geq \epsilon_0' \mbox{ and } n \in \N.
\end{align}
Together \eqref{eq:GammabSmallEpsilon} and \eqref{eq:GammabLargeEpsilon} show that 
\eqref{eq:UnEpsilonSquaredBound} and \eqref{eq:UnEpsilonBound} hold, with $\mu = 1$ and $N=1$, for some 
constants $C,c > 0$ depending on $c_1,c_2,c_1',c_2',\epsilon_0, \epsilon_0'$.  

For the proofs in both cases below we use the following notation for states $\lambda \in \Lambda_0$.

\begin{itemize}
\item $\psi(\lambda) \equiv \psi_{\lambda}$ is the stationary probability of state $\lambda$, as defined in Section 
\ref{subsec:StepDistributionZxChain}, and $\psi_n(\lambda) \equiv \frac{1}{n} \sum_{j=1}^n \indicator\{\omega^j = \lambda\}$
is the empirical frequency of state $\lambda$.
\item $\Gamma_j(\lambda) \equiv \Gamma_{\tau_j(\lambda)}$, where $\tau_j(\lambda)$ is the $j$-{th} visit time to state 
$\lambda$ for the Markov chain $(\omega^i)_{i \in \N}$:
$\tau_{j+1}(\lambda) = \inf \{i > \tau_j(\lambda) : \omega^i = \lambda\} ~\mbox{ with }~ \tau_0(\lambda) \equiv 0.$
\item $E(\lambda) \equiv \E(\Gamma_j(\lambda)) = \E(\Gamma_j|\omega^j = \lambda)$. 
\end{itemize}

\noindent
\emph{Proof of \eqref{eq:GammabSmallEpsilon}}:
For each $\lambda \in \Lambda_0$, $(\Gamma_j(\lambda))_{j \in \N}$ is a sequence of i.i.d. random variables
with mean $E(\lambda)$ and exponential tails. Thus, by Fact 1, there exist constants $b_1, b_2 > 0$ such that
for each $\lambda \in \Lambda_0$, 
\begin{align}
\label{eq:EachLambdaGammaBound}
\P(|\Gammab_n(\lambda) - E(\lambda)| > \epsilon) \leq b_1 \exp(-b_2 \epsilon^2 n) ~,~ \mbox{for all } 0 < \epsilon \leq 1, n \in \N.
\end{align}
Also, by Fact 2, there exists constants $b_3, b_4 > 0$ such that for each $\lambda \in \Lambda_0$, 
\begin{align}
\label{eq:EachLambdaPsiBound}
\P(|\psi_n(\lambda) - \psi(\lambda)| > \epsilon) \leq b_3 \exp(- b_4 \epsilon^2 n) ~,~ \mbox{for all } \epsilon > 0, n \in \N.
\end{align}
Finally, using nonnegativity of the sequence $(\Gamma_j(\lambda))_{j \in \N}$ one may show that, 
for any $0 < \epsilon \leq 1/3$ and $n \in \N$, the following holds:
\begin{align}
\label{eq:ConcentrationBetweenEndpoints}
& \mbox{ If } |\Gammab_{j_{\min}}(\lambda) - E(\lambda)| \leq \epsilon
\mbox{ and } |\Gammab_{j_{\max}}(\lambda) - E(\lambda)| \leq \epsilon, \nonumber \\
& \mbox{ then } |\Gammab_j(\lambda) - E(\lambda)| \leq \epsilon b_5,  
\mbox{ for all } n \psi(\lambda) (1-\epsilon) \leq j \leq n \psi(\lambda) (1+\epsilon),
\end{align}
where
\begin{align*} 
& j_{\min} = j_{\min}(n,\lambda, \epsilon) \equiv \ceil{n \psi(\lambda) (1 - \epsilon)}, \\
& j_{\max} = j_{\max}(n,\lambda, \epsilon) \equiv \max\{j_{\min}, \floor{n \psi(\lambda) (1 + \epsilon)} \}, \\
& b_5 \equiv \max_{\lambda \in \Lambda_0} \{3E(\lambda) + 2\}.  
\end{align*}

Now, define $G_{n,\epsilon}$ to be the ``good event'' that for each $\lambda \in \Lambda_0$ 
the following two conditions are satisfied:
\begin{enumerate}
\item $|\psi_n(\lambda) - \psi(\lambda)| \leq \epsilon \psi(\lambda)$.
\item $|\Gammab_j(\lambda) - E(\lambda)| \leq \epsilon b_5$, for all $n \psi(\lambda) (1-\epsilon) \leq j \leq n \psi(\lambda) (1+\epsilon)$. 
\end{enumerate}
By \eqref{eq:EachLambdaGammaBound} and \eqref{eq:ConcentrationBetweenEndpoints} together with the union bound, we have
\begin{align*}
\P\big( |\Gammab_j&(\lambda) - E(\lambda)| > \epsilon b_5, \mbox{ for some } n \psi(\lambda) (1-\epsilon) \leq j \leq n \psi(\lambda) (1+\epsilon) \big) \\
& \leq 2 b_1 \exp[-b_2 \epsilon^2 (n\psi(\lambda)(1-\epsilon))]
\leq 2 b_1 \exp[- (2/3) b_2 \psi(\lambda) \epsilon^2 n]
\end{align*}
for each $\lambda \in \Lambda_0$, $n \in \N$, and $\epsilon \leq 1/3$. Thus, by \eqref{eq:EachLambdaPsiBound} and the union bound,
\begin{align}
\label{eq:ProbGnepsComplimentBound}
\P(G_{n,\epsilon}^c) 
& \leq 2Lb_1 \exp(- (2/3) b_2 \psi_{\min} \epsilon^2 n) +
Lb_3 \exp(- b_4 \psi_{\min}^2 \epsilon^2 n) \nonumber \\
& \leq b_6 \exp(- b_7 \epsilon^2 n), 
\end{align}
for all $n \in \N$ and $\epsilon \leq 1/3$, where 
\begin{align*}
\psi_{\min} = \min_{\lambda \in \Lambda_0} \psi(\lambda),~
b_6 = 2 L b_1 + L b_3,~ \mbox{and } 
b_7 = \min\{ (2/3) b_2 \psi_{\min}, b_4 \psi_{\min}^2 \}.
\end{align*}

Since $\alpha = 1/2$, Lemma \ref{lem:muEqual1} implies 
$\sum_{\lambda \in \Lambda_0} \psi(\lambda) E(\lambda) = \left<\psi, E \right> = 1$. 
Thus, on the event $G_{n,\epsilon}$, $\epsilon \leq 1/3$, we have
\begin{align}
\label{eq:ConcentrationOnGneps}
|\Gammab_n - 1| 
& = \bigg| \sum_{\lambda \in \Lambda_0} \bigg( \sum_{j=1}^{n \psi_n(\lambda)} \frac{\Gamma_j(\lambda)}{n} - E(\lambda) \psi(\lambda) \bigg) \bigg| \nonumber \\
& \leq  \sum_{\lambda \in \Lambda_0} \bigg( \psi_n(\lambda) \bigg|  \sum_{j=1}^{n \psi_n(\lambda)} \frac{\Gamma_j(\lambda)}{n \psi_n(\lambda)} - E(\lambda) \bigg|
~+~ E(\lambda) \big| \psi_n(\lambda) - \psi(\lambda) \big| \bigg) \nonumber \\
& \leq \sum_{\lambda \in \Lambda_0} \Big( \psi_n(\lambda) \cdot \epsilon b_5 ~+~ E(\lambda) \cdot \epsilon \psi(\lambda) \Big) \nonumber \\
& \leq b_8 \epsilon, \mbox{ where } b_8 \equiv b_5 + \max_{\lambda \in \Lambda_0} E(\lambda).
\end{align}
Together \eqref{eq:ProbGnepsComplimentBound} and \eqref{eq:ConcentrationOnGneps} show that, for any 
$0 < \epsilon \leq 1/3$ and $n \in \N$,
\begin{align*}
\P(|\Gammab_n - 1| > b_8 \epsilon) \leq b_6 \exp(- b_7 \epsilon^2 n), 
\end{align*}
which is equivalent to \eqref{eq:GammabSmallEpsilon}, for $0 < \epsilon \leq \epsilon_0 \equiv b_8/3$, 
with $c_1 = b_6$ and $c_2 = b_7/b_8^2$. \\

\noindent
\emph{Proof of \eqref{eq:GammabLargeEpsilon}}:
Let $r = \max \{p, q\}$ and let $\xi$ be a geometric random variable with parameter $1-r$ started from $0$, i.e. $\P(\xi = k) = r^k (1-r)$, $k \geq 0$. 
Then, $S_0$ and $S_R$ are both stochastically dominated by $\xi$, so $\sum_{j = 1 }^n \Gamma_j$ is stochastically dominated by 
$\sum_{j = 1}^n \xi_j$, for each $n \in \N$, where $\xi_1, \xi_2,...$ are i.i.d. distributed as $\xi$. Further, by Fact 1, there exist constants 
$b_1,b_2 > 0$ such that for all $\epsilon \geq 1$ and $n \in \N$,
\begin{align*}
\P\Big(\xib_n - \frac{r}{1-r} > \epsilon \Big) = \P(\xib_n - \E(\xi) > \epsilon) \leq b_1 \exp( - b_2 \epsilon n).
\end{align*}

Now, since $\alpha = 1/2$, either $p$ or $q$ must be at least $1/2$, so $r/(1-r) \geq 1$. Thus, for 
$\epsilon \geq \epsilon_0' \equiv 2r/(1-r)$ we have 
\begin{align*}
\P(\Gammab_n - 1 > \epsilon) \leq \P(\xib_n > \epsilon) \leq \P\Big(\xib_n - \frac{r}{1-r} > \frac{\epsilon}{2} \Big) \leq b_1 \exp( - (b_2/2) \epsilon n).
\end{align*}
On the other hand, for all $\epsilon \geq \epsilon_0'$ we also have
\begin{align*}
\P(\Gammab_n - 1 < -\epsilon) = 0,
\end{align*}
since $\Gammab_n$ is nonnegative and $\epsilon_0' > 1$. Hence, \eqref{eq:GammabLargeEpsilon} holds with $c_1' = b_1$ and $c_2' = b_2/2$. 
\end{proof}

\bibliography{ref}

\end{document}